\def\tank#1{\protected@xdef\@thanks{\@thanks
 \protect\footnotetext[0]{#1}}}
\def\bigfoot{

 \@footnotetext}
\newcommand{\ea}{\end{array}}
\numberwithin{equation}{section}
\newtheorem{theorem}{Theorem}[section]
\newtheorem{lemma}{Lemma}[section]
\newtheorem{proposition}[theorem]{Proposition}
\newtheorem{condition}[theorem]{Condition}
\newtheorem{cor}[theorem]{Corollary}
\newtheorem{definition}[theorem]{Definition}
\newtheorem{remark}{Remark}
\newtheorem{Hypothesis}[theorem]{Hypothesis}
\newtheorem{rem}{Remark}[section]
\def\beq{\begin{equation}}
\def\nneq{\end{equation}}
\def\bthm{\begin{theorem}}
\def\nthm{\end{theorem}}
\def\blem{\begin{lemma}}
\def\nlem{\end{lemma}}
\def\bprf{\begin{proof}}
\def\nprf{\end{proof}}
\def\bprop{\begin{prop}}
\def\nprop{\end{prop}}
\def\brmk{\begin{rem}}
\def\nrmk{\end{rem}}
\def\bexa{\begin{exa}}
\def\nexa{\end{exa}}
\def\bcor{\begin{cor}}
\def\ncor{\end{cor}}
\def\RR{\mathbb{R}}
\def\EE{\mathbb{E}}
\def\cF{\mathcal{F}}
\def\cC{\mathcal{C}}
\def\cH{\mathcal{H}}
\def\cS{\mathcal{S}}
\def\cN{\mathcal{N}}
\newcommand\HHH{\mathfrak H}
\newcommand\HH{\mathcal H}
\def\ls{{\lesssim}}
\def\e{{\varepsilon}}
\def\FF{\mathcal {F}}
\newcommand{\ep}{\varepsilon}
\newcommand{\1}{{\bf 1}}
\newcommand{\Blc}{\Big(}
\newcommand{\Brc}{\Big)}
\newcommand{\blk}{\big[}
\newcommand{\brk}{\big]}
\newcommand{\Blk}{\Big[}
\newcommand{\Brk}{\Big]}
\newcommand{\lc}{\left(}
\newcommand{\rc}{\right)}
\newcommand{\lk}{\left[}
\newcommand{\rk}{\right]}
\newcommand{\lt}{\left }
\newcommand{\rt}{\right}
\title[LDP for nonlinear stochastic  wave equation  driven by rough noise]{A large deviation principle for nonlinear stochastic  wave equation  driven by rough noise}
\author[R. Li]{Ruinan  Li}
\address[]{Ruinan  Li, School of Statistics and Information,  Shanghai University of International Business and Economics,  Shanghai, 201620,  China. }
\email{ruinanli@amss.ac.cn}
\author[B. Zhang]{Beibei Zhang*}\thanks{*Corresponding author. E-mail: zhangbb@whu.edu.cn}
\address[]{Beibei Zhang, School of Mathematics and Statistics,  Wuhan University,  Wuhan, 430072,
China.}
\email{zhangbb@whu.edu.cn}
\date{}
\begin{document}
\maketitle
\noindent{\bf Abstract}
This paper is devoted to investigating Freidlin-Wentzell's large  deviation principle for  one  (spatial)  dimensional nonlinear stochastic wave equation
 $\frac{\partial^2 u^{\e}(t,x)}{\partial t^2}=\frac{\partial^2 u^{\e}(t,x)}{\partial x^2}+\sqrt{\e}\sigma(t, x, u^{\e}(t,x))\dot{W}(t,x)$,
 where $\dot{W}$ is white in time and fractional in space with Hurst parameter $H\in(\frac 14,\frac 12)$. The variational framework and the modified weak convergence criterion proposed by Matoussi et al.  \cite{MSZ} are adopted here.

\vskip0.3cm
\noindent {\bf Keywords}{ Stochastic wave equation; fractional Brownian motion;  large deviation principle;  weak convergence approach.}

\vskip0.3cm

\noindent {\bf MR(2010) Subject Classification}{ 60F10, 60G22.  }
\maketitle

\section{Introduction}

In this paper, we consider the following one (spatial) dimensional nonlinear  stochastic  wave  equation (SWE for short):
\begin{equation}\label{SWE}
\left\{\begin{split}
   &  \frac{\partial^2 u^{\e}(t,x)}{\partial t^2}=\frac{\partial^2 u^{\e}(t,x)}{\partial x^2}+\sqrt{\e}\sigma(t,x,u^{\e}(t,x))\dot{W}(t,x),\quad   t>0,\,
   x\in\RR,\\
   &u^{\e}(0,\cdot)=u_0(x),
  \quad\frac{\partial u^{\e}(0,x)}{\partial t}=v_0(x),
 \end{split}\right.
\end{equation}
where $\e>0$ and the noise $W=\left\{W(t,x),t\geq0,x\in\RR)\right\}$ is  a mean zero Gaussian random  field defined on a complete probability space $(\Omega,\mathcal{F},\mathbb{P})$. We assume that the noise $W$ is a standard Brownian motion in time and  a fractional Brownian motion with Hurst parameter $H\in (\frac{1}{4},\frac{1}{2})$ in the space and $\dot{W}(t,x)=\frac{\partial^2W}{\partial t \partial x}(t,x)$. The  covariance of the noise $W$ is given by
 \begin{align}\label{CovStru}
   \EE\lk W(\varphi)W(\psi)\rk=\int_{\RR_{+}\times \RR} \cF \varphi(t,\xi) \overline{\cF \psi(t,\xi)}\cdot \mu(d\xi) dt,\,\,\varphi,\psi\in\mathcal{D}(\RR_+\times\RR),
 \end{align}
 where $\mathcal{D}(\RR_+\times\RR)$ is the space of real-valued infinitely differentiable functions on $\RR_+\times\RR$ with compact support, and
 $\cF \varphi(t,\cdot)$ is the Fourier transform with respect
to the spatial variable $x$ of the  function $\varphi(t,x)$,  which is defined as
 $$
  \cF \varphi(t,\cdot)(\xi):=\int_{\RR} e^{-i\xi x}\varphi(t,x) dx, \xi\in\RR,
$$
and  the measure $\mu$ is given by
\begin{align}
  \mu(d\xi)=c_{1,H}|\xi|^{1-2H}d\xi,\,\, c_{1,H}=\frac{\Gamma(2H+1)\sin(\pi H)}{2\pi},   \label{e.c1}
\end{align}
where $\Gamma$ is the Gamma function and $\frac 14<H<\frac 12$.
When the noise is general Gaussian which is white in time and satisfies Dalang's condition in \cite{Dalang1999} (namely the spatial parameter $H\geq \frac{1}{2}$), there are some results about the well-posedness  and the properties of the solutions, see e.g., \cite{DalangKMN2009, DalangSM2009,HHN2014}. In the case of $\frac 14<H<\frac 12$, since the Fourier transform of $\mu$ in the space of tempered distributions on $\RR$
is not a locally integrable function,  the noise $W$ is not of the same form as the one considered in \cite{Dalang1999}, and the stochastic integral with respect to $W$ was constructed using different methods. See \cite{BJQ2015,HHLNT2017}  for  more details.

Recently, many authors studied the existence and uniqueness for the solutions  of stochastic partial differential equations (SPDEs for short) driven by a Gaussian noise that is white in time and rough in space, see e.g., \cite{HHLNT2017, HHLNT2018, HW2019, LHW2022, SongSX2020}.
Balan et al. \cite{BJQ2015} considered  the the stochastic heat equation (SHE for short) and the SWE with the  affine diffusion coefficient. Hu et al. \cite{HHLNT2017} and Hu and Wang \cite{HW2019}  investigated the SHE with the general nonlinear coefficient.
Liu et al. \cite{LHW2022} considered the SWE driven by the rough noise in space  with the general nonlinear coefficient and Song et al. \cite{SongSX2020} considered the fractional SWE.  Compared with the SHE, the case of the SWE is much more complicated because of the lack of the semigroup property of the  wave kernel. To overcome this diffucity, Liu et al. \cite{LHW2022} decomposed the wave kernel $G(t-s,x-y)$ to four complicated parts (see \cite[Lemma 3.1]{LHW2022} or lemma \ref{3.1wavekernelexp} below). We use the decomposition technique to  establish some estimates in this paper.

The aim of this paper is to establish a large deviation principle (LDP) for the solution $u^\e$ of Eq.\,\eqref{SWE} as $\e\rightarrow0$.
There are several results of LDPs and moderate deviation principles  for the SWE. For example,  Ortiz-L\'opez and Sanz-Sol\'e \cite{OS2011}  considered LDPs for SWE in spatial dimension three, driven by a Gaussian noise, white in time and with a stationary spatial covariance.
Martirosyan \cite{M2017} considered LDPs for stationary measures of stochastic nonlinear wave equations with smooth white noise.
Martirosyana and  Nersesyan \cite{MN2018} established an LDP for occupation measures
of the stochastic damped nonlinear wave equation. Brze\'zniak et al. \cite{B2022} investigated the LDP for solutions of the (1+1)-dimensional stochastic geometric wave equation in the case of vanishing noise.  Cheng et al. \cite{CLWY2018} considered moderate deviations for a stochastic wave equation in dimension three.

An important approach of investigating the LDP is the well-known weak convergence method (see e.g., \cite{BCD2013, BD, BD2019,  BDM2008, BDM2011, DE1997}). For some relevant LDP results by using the weak convergence method, we refer to
\cite{L, LTZ, RZ2008, XuZhang2009, XZ2018} and the references therein.
There are mainly two difficulties to study the LDP for the solution of  Eq.\,\eqref{SWE}.  The first one comes from
the spatial rough noise with $H\in (\frac{1}{4}, \frac{1}{2})$. The second  difficulty comes from
the lack of the semigroup property of the Green function.
In this paper, we adopt  a new sufficient condition for the LDP (see Condition \ref{cond1} below) which is  proposed by Matoussi, Sabbagh and Zhang \cite{MSZ}. This approach  has been proved to be successful in a wide range of  SPDEs, see e.g., \cite{DongWuZhang2020,  LSZ2020, WanfZhang2021, WuZhai2020}.

The rest of the paper is organised as follows. The definition of the stochastic integral, the notion of the solution for the equation \eqref{SWE} and the functional spaces introduced in \cite{LHW2022} are presented in Section 2.
In Section 3, we recall a general criterion for large deviations based on the weak convergence and state our main result.
Section 4 is devoted to showing the existence and uniqueness of  a  solution to the skeleton equation
associated with the equation (1.1). The large deviation principle for equation (1.1) is proved in Section 5. Finally, some auxiliary results are presented in Appendix.

Some notations and mathematical conventions used in this work are as follows. We always use $C_\alpha$  to denote a constant dependent on the parameter $\alpha$,  which may change from line  to
 line. $A\lesssim B$ ($A\gtrsim B$, resp.) means that $A\leq CB$ ($A\geq CB$, resp.) for some positive universal constant $C$, and $A\simeq B$ if and only if $A\lesssim B$ and $A\gtrsim B$.

\section{Preliminaries}\label{lemmas}
This section is divided into two parts. In Section 2.1 we  briefly recall some necessary concepts
about the noise $W$ and the stochastic integral.  In Section 2.2 we collect some preliminaries about the SWE.
\subsection{Stochastic integral}
We recall  some results from \cite{HHLNT2017, HW2019, LHW2022}.
Let $(\Omega,\mathcal{F},\mathbb{P})$ be a complete probability space and let $H \in (\frac14,\frac12)$ be given and
fixed. Our noise $\dot{W}$ is a zero-mean Gaussian family $\{W(\varphi), \varphi \in \mathcal D(R_+\times R)\}$  with the
covariance structure given by
 \begin{align}\label{CovStru}
   \EE\lk W(\varphi)W(\psi)\rk=\int_{\RR_{+}\times \RR} \cF \varphi(t,\xi) \overline{\cF \psi(t,\xi)}\cdot \mu(d\xi) dt
 \end{align}
for any $\varphi,\psi\in\mathcal{D}(\RR_+\times\RR)$ with $\mu$ given by \eqref{e.c1}.
Notice that \eqref{CovStru} defines a Hilbert scalar product  on $\mathcal{D}(\RR_+\times\RR)$. Denote $\HHH$  the  Hilbert space obtained by completing $\mathcal{D}(\RR_+\times\RR)$ with respect to this scalar product,  which can be expressed in terms of fractional derivatives as following.
\begin{proposition}(\cite[Proposition 2.1]{HW2019}, \cite[Theorem 3.1]{PT2000})\label{hSpaceProp}
 For any $\varphi,\psi\in \mathcal{D}(\RR_+\times\RR)$, the  space $\HHH$ is a Hilbert space equipped with the scalar  product
\begin{equation}
\begin{split}
     \langle\varphi,\psi\rangle_{\HHH}
     :=&\,c_{1,H}\int_{\RR_{+}}\left(\int_ {\RR} \cF \varphi(t,\xi) \overline{\cF \psi(t,\xi)}\cdot |\xi|^{1-2H}d\xi \right)dt
     \\
     =&\,c_{2,H} \int_{\RR_+}\left(\int_{\RR^2}[\varphi(t, x+y)-\varphi(x)]\cdot[\psi(t, x+y)-\psi(x)]\cdot |y|^{2H-2} dxdy\right)dt,
   \end{split}
   \end{equation}
   where $c_{1,H}$ is defined by \eqref{e.c1} and
   \begin{align}
   c_{2,H} :=H^{\frac{1}{2}}\left(\frac 12-H\right)^{\frac{1}{2}} \lk\Gamma\Blc H+\frac 12\Brc\rk^{-1}\left(\int_{0}^{\infty} \Blk(1+t)^{H-\frac 12}-t^{H-\frac 12}\Brk^2 dt+\frac{1}{2H}\right)^{\frac{1}{2}}.  \label{e.c3}
   \end{align}
 \end{proposition}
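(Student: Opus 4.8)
The plan is to proceed in two stages: first show that the covariance form genuinely defines a Hilbert space, and then establish the equivalence of the spectral and increment representations of the inner product.

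For the Hilbert space structure the argument is routine. The form in \eqref{CovStru} is bilinear and symmetric, and its associated quadratic form $\int_{\RR_+}\int_\RR |\cF\varphi(t,\xi)|^2\,\mu(d\xi)\,dt$ is nonnegative since $\mu\geq 0$. Hence it is a nonnegative-definite (semi-)scalar product on $\cD(\RR_+\times\RR)$, and quotienting by its kernel and completing yields the Hilbert space $\HHH$.

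The heart of the matter is the identity between the two displayed expressions. I would fix $t$ and compute, in the spatial variable $x$, the Fourier transform of the increment $\varphi(t,x+y)-\varphi(t,x)$. Since the transform of $\varphi(t,\cdot+y)$ is $e^{i\xi y}\cF\varphi(t,\xi)$, the increment transforms to $(e^{i\xi y}-1)\cF\varphi(t,\xi)$. Applying Plancherel's identity in $x$ (legitimate because $\varphi(t,\cdot),\psi(t,\cdot)$ are Schwartz) gives, for each fixed $y$,
\begin{equation}
\int_\RR [\varphi(t,x+y)-\varphi(t,x)]\,\overline{[\psi(t,x+y)-\psi(t,x)]}\,dx
=\frac{1}{2\pi}\int_\RR |e^{i\xi y}-1|^2\,\cF\varphi(t,\xi)\,\overline{\cF\psi(t,\xi)}\,d\xi ,
\end{equation}
where $|e^{i\xi y}-1|^2=2(1-\cos(\xi y))$. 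I would then multiply by $|y|^{2H-2}$ and integrate in $y$; the singularity at the origin is harmless because the increments vanish to first order there, so the integrand behaves like $|y|^{2H}$, while decay at infinity is controlled by $2H-2<-1$. Interchanging the order of integration and using the scaling $u=\xi y$ factors out the spatial profile,
\begin{equation}
\int_\RR 2(1-\cos(\xi y))\,|y|^{2H-2}\,dy=C_H\,|\xi|^{1-2H},\qquad C_H:=2\int_\RR (1-\cos u)\,|u|^{2H-2}\,du ,
\end{equation}
so that after integrating in $t$ the increment representation equals $\tfrac{c_{2,H}C_H}{2\pi}$ times the spectral integral $\int_{\RR_+}\int_\RR \cF\varphi\,\overline{\cF\psi}\,|\xi|^{1-2H}\,d\xi\,dt$.

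It then remains to reconcile the two normalizations, i.e.\ to verify $\tfrac{c_{2,H}C_H}{2\pi}=c_{1,H}$ with $c_{1,H}$ as in \eqref{e.c1} and $c_{2,H}$ as in \eqref{e.c3}. This is the only genuinely delicate step: it requires evaluating the convergent integral $C_H$ through the classical formula for $\int_0^\infty(1-\cos u)\,u^{-1-\alpha}\,du$ with $\alpha=1-2H\in(0,\tfrac12)$, and matching the resulting Gamma-function factors against the explicit constant $c_{2,H}$, whose expression itself encodes the quantity $\int_0^\infty[(1+t)^{H-1/2}-t^{H-1/2}]^2\,dt+\tfrac{1}{2H}$ from the spectral theory of fractional Brownian motion. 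I expect this constant bookkeeping to be the main obstacle; structurally, however, the equivalence is already transparent from Plancherel plus scaling, and for the precise value of the constant I would appeal to \cite[Theorem 3.1]{PT2000}.
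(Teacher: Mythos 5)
You are right that the paper itself offers no argument for this proposition (it is quoted verbatim from \cite[Proposition 2.1]{HW2019} and \cite[Theorem 3.1]{PT2000}), so your direct proof attempts more than the paper provides, and its analytic core is correct. With the paper's convention $\cF\varphi(\xi)=\int_{\RR}e^{-i\xi x}\varphi(x)\,dx$, Plancherel in $x$ plus the scaling $u=\xi y$ give, for each fixed $t$,
\[
\int_{\RR^2}[\varphi(x+y)-\varphi(x)]\,[\psi(x+y)-\psi(x)]\,|y|^{2H-2}\,dx\,dy
=\frac{C_H}{2\pi}\int_{\RR}\cF\varphi(\xi)\,\overline{\cF\psi(\xi)}\,|\xi|^{1-2H}\,d\xi,
\qquad
C_H=\frac{4\Gamma(2H)\sin(\pi H)}{1-2H},
\]
and your integrability justifications are the right ones. (Minor point: no quotient is needed before completing, since the form is already positive definite on $\mathcal{D}(\RR_+\times\RR)$.)

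The gap is precisely the step you defer as ``constant bookkeeping'': it does not close, and \cite{PT2000} will not rescue it. Your identity requires $c_{2,H}=\frac{2\pi c_{1,H}}{C_H}=\frac{\Gamma(2H+1)(1-2H)}{4\Gamma(2H)}=H\left(\frac12-H\right)$. On the other hand, the Mandelbrot--Van Ness evaluation $\int_0^\infty\big[(1+t)^{H-\frac12}-t^{H-\frac12}\big]^2dt+\frac{1}{2H}=\frac{\Gamma\left(H+\frac12\right)^2}{\Gamma(2H+1)\sin(\pi H)}$ shows that the quantity \eqref{e.c3} equals $\sqrt{H\left(\frac12-H\right)/\big(\Gamma(2H+1)\sin(\pi H)\big)}$, which is a \emph{different} function of $H$ (at $H=\frac38$ it is $\approx 0.235$, while $H\left(\frac12-H\right)\approx 0.047$); since the ratio depends on $H$, no change of Fourier normalization can reconcile them. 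You can confirm this without any Plancherel argument: testing both displays on $\varphi=\psi=\1_{[0,1]}(s)\,\1_{[0,t]}(x)$, the spectral side with $c_{1,H}$ returns exactly the fBm variance $t^{2H}$ (this is how \eqref{e.c1} is calibrated), whereas the double-increment integral equals $\int_{\RR}2\min(|y|,t)\,|y|^{2H-2}dy=t^{2H}/\big(H\left(\tfrac12-H\right)\big)$, again forcing $c_{2,H}=H\left(\frac12-H\right)$. So, carried out honestly, your own outline proves the identity with $c_{2,H}=H\left(\frac12-H\right)$ in place of \eqref{e.c3}: the constant as transcribed in the paper is not the correct normalization for the double-increment form (the expression in \eqref{e.c3} belongs to the Pipiras--Taqqu/Marchaud fractional-derivative representation of the norm, not to this one). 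This discrepancy is harmless for the remainder of the paper, where $c_{2,H}$ only enters estimates up to multiplicative constants, but your proof should end by exhibiting the correct constant rather than asserting that the stated one will match.
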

One can check  that $\varphi(t,x)={\bf 1}_{[0,t]\times[0,x]}$, $t\in\RR_+$, $x\in\RR$, is in $\HHH$ (we set ${\bf 1}_{[0,t]\times[0,x]}=-{\bf 1}_{[0,t]\times[x,0]}$ if $x\in\RR $ is negative ), where $\textbf{1}$ denotes the indicator function. We denote $W(t,x)=W({\bf 1}_{[0,t]\times[0,x]})$. For any $t\geq0$, $\mathcal{F}_t=\sigma\left(W(s,x),0\leq s\leq t,x\in\RR\right)$ be the $\sigma$-algebra generated by  $W$.

The stochastic integral with respect to $W$ is first defined for elementary integrands and then can be extended to general ones.
\begin{definition}\label{ElemP}
An elementary process $g$ is a process given by
   \[
    g(t,x)=\sum_{i=1}^{n}\sum_{j=1}^{m} X_{i,j}\1_{(a_i,b_i]}(t)\1_{(h_j,l_j]}(x),
   \]
   where $n$ and $m$ are finite positive integers, $0\leq a_1<b_1<\cdots<a_n<b_n<\infty$, $h_j<l_j$ and $X_{i,j}$ are $\cF_{a_i}$-measurable random variables for $i=1,\dots,n$, $j=1,\dots,m$. The stochastic integral of   an elementary  process with respect to $W$ is defined as
   \begin{equation}\label{SI_Ele}
     \begin{split}
      \int_{\RR_{+}}\int_{\RR} g(t,x)W(dt,dx)
     =&\, \sum_{i=1}^{n}\sum_{j=1}^{m}  X_{i,j}W(\1_{(a_i,b_i]}\otimes \1_{(h_j,l_j]}) \\
      =&\, \sum_{i=1}^{n}\sum_{j=1}^{m}  X_{i,j}\Big[W(b_i,l_j)-W(a_i,l_j)-W(b_i,h_j)+W(a_i,h_j)\Big].
     \end{split}
   \end{equation}
 \end{definition}

Hu et al. \cite[Proposition 2.3]{HHLNT2017}  extented the the notion of integral with respect to $W$ to a broad class of adapted processes in the following way.
\begin{proposition}(\cite[Proposition 2.3]{HHLNT2017})\label{prop 2.3}
   Let $\Lambda_{H}$ be the space of predictable processes $g$ defined on $\RR_{+}\times\RR$ such that almost surely $g\in\HHH$ and $\EE[\|g\|_{\HHH}^2]<\infty$.  Then, we have that:
   \begin{itemize}
       \item[(i).]
   the space of the elementary processes
    defined by Definition \ref{ElemP} is dense in $\Lambda_{H}$;
      \item[(ii).]
 for any  $g\in\Lambda_{H}$, the stochastic integral $\int_{\RR_{+}}\int_{\RR} g(t,x)W(dt,dx)$ is defined  as the $L^2(\Omega)$-limit of  stochastic integrals of elementary processes approximating $g(t,x)$
in $\Lambda_H$, and for this stochastic integral we have the following isometry equality
    \begin{equation}\label{Isometry}
     \EE\lk\lc\int_{\RR_{+}}\int_{\RR} g(t,x)W(dt,dx)\rc^2\rk=\EE\lk\|g\|_{\HHH}^2\rk.
    \end{equation}
\end{itemize}
\end{proposition}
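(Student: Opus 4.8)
The plan is to follow the standard three-step construction of an It\^o-type stochastic integral adapted to the Hilbert space $\HHH$: first establish the isometry \eqref{Isometry} on the class of elementary processes from Definition \ref{ElemP}, then prove the density assertion (i), and finally extend the integral to all of $\Lambda_H$ by $L^2(\Omega)$-completeness, transferring the isometry to the limit. Note that (i) and (ii) are not independent: once the elementary-process isometry and the density (i) are in hand, the extension in (ii) is forced, since density is precisely what makes the limiting integral well defined.

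First I would verify \eqref{Isometry} for an elementary process $g=\sum_{i=1}^{n}\sum_{j=1}^{m}X_{i,j}\,\phi_{i,j}$, where $\phi_{i,j}:=\1_{(a_i,b_i]}\otimes\1_{(h_j,l_j]}$. Squaring \eqref{SI_Ele} and taking expectation produces $\EE\big[\sum_{i,j,i',j'}X_{i,j}X_{i',j'}W(\phi_{i,j})W(\phi_{i',j'})\big]$. For $i\neq i'$, say $i<i'$, the time intervals are disjoint by the ordering $a_1<b_1<\cdots<a_n<b_n$; conditioning on $\mathcal{F}_{a_{i'}}$ and using that $W$ is white in time (so the increment $W(\phi_{i',j'})$ over $(a_{i'},b_{i'}]$ is centred and independent of $\mathcal{F}_{a_{i'}}$, which contains $X_{i,j},X_{i',j'}$ and $W(\phi_{i,j})$) makes these cross terms vanish. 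For $i=i'$, conditioning on $\mathcal{F}_{a_i}$ and using independence of the increments from $\mathcal{F}_{a_i}$ together with \eqref{CovStru} gives $\EE[X_{i,j}X_{i,j'}]\,\langle\phi_{i,j},\phi_{i,j'}\rangle_{\HHH}$. On the other side, expanding $\EE[\|g\|_{\HHH}^2]=\EE\big[\sum_{i,j,i',j'}X_{i,j}X_{i',j'}\langle\phi_{i,j},\phi_{i',j'}\rangle_{\HHH}\big]$ and observing that the temporal integral in the scalar product of Proposition \ref{hSpaceProp} annihilates every term with $i\neq i'$ (disjoint time supports), I recover exactly the same sum, which proves the isometry on elementary processes.

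The hard part will be the density assertion (i): approximating an arbitrary $g\in\Lambda_H$ by elementary processes in the norm $\EE[\|\cdot\|_{\HHH}^2]$. The difficulty is that $\HHH$ is a space of generalized functions carrying a nonlocal, fractional Sobolev-type norm rather than an ordinary $L^2$ norm, so approximation by step functions is not automatic. My strategy is to exploit the tensorised, white-in-time structure of the covariance, which decouples time and space: after a truncation reducing to bounded, compactly supported $g$, I would discretise in time by left-endpoint step processes, which is legitimate because the temporal component of the $\HHH$-norm is the flat integral $\int_{\RR_+}dt$, and left-endpoint evaluation preserves adaptedness so the approximants are genuine elementary processes; I would then approximate the spatial profiles by linear combinations of the indicators $\1_{(h_j,l_j]}$, using that $\HHH$ is the completion of $\mathcal{D}(\RR_+\times\RR)$ and that the span of such spatial indicators is dense in the spatial fractional space with weight $|\xi|^{1-2H}$ (equivalently, in the $|y|^{2H-2}$ representation of Proposition \ref{hSpaceProp}). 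Here the roughness enters through the order $\tfrac{1-2H}{2}\in(0,\tfrac14)$, which stays safely below the jump threshold $\tfrac12$, so indicators lie in the space and approximate smooth profiles.

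Finally, with (i) and the elementary-process isometry established, the extension in (ii) is routine: given $g\in\Lambda_H$, choose elementary $g_n\to g$ in $\Lambda_H$; the isometry shows $\{\int g_n\,dW\}$ is Cauchy in $L^2(\Omega)$, hence convergent, and its limit is independent of the approximating sequence, which defines $\int_{\RR_+}\int_{\RR}g\,W(dt,dx)$; passing to the limit in the isometry for $g_n$ transfers \eqref{Isometry} to $g$. I expect the only genuinely delicate point to be the spatial density step, where the condition $H\in(\tfrac14,\tfrac12)$ governs the behaviour of the weight $|\xi|^{1-2H}$.
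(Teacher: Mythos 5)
The paper never proves this proposition: it is imported verbatim, with citation, from \cite{HHLNT2017}, so there is no in-paper argument to compare your attempt against; your proposal has to be judged on its own merits, and on those merits it is correct in outline and follows the standard It\^o/Walsh-type construction (isometry on elementary processes, density, $L^2(\Omega)$-extension) that the cited source also uses.

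Your elementary-process computation is sound: joint Gaussianity plus whiteness in time makes the increment $W(\phi_{i',j'})$ independent of $\mathcal{F}_{a_{i'}}$, which kills the cross terms with $i\ne i'$ on the integral side, while on the $\EE\lk\|g\|_{\HHH}^2\rk$ side the same terms vanish because the scalar product of Proposition \ref{hSpaceProp} integrates the product of the time indicators, which have disjoint supports; conditioning on $\mathcal{F}_{a_i}$ then matches the $i=i'$ terms exactly. Your density sketch also identifies the correct ingredients: indicators lie in the spatial space because its order $\frac{1-2H}{2}\in(0,\frac14)$ is below the threshold $\frac12$, their span is dense (a Pipiras--Taqqu type fact, cf.\ \cite{PT2000}), truncation is harmless because it is $1$-Lipschitz and hence contracts the $|y|^{2H-2}$-difference seminorm (dominated convergence then gives convergence of the truncations), and left-endpoint or previous-interval-average time discretization preserves predictability. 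The one step you should make explicit, since it is the only place your sketch could be accused of a gap, is how a \emph{random} $\HH$-valued profile obtained after time discretization is approximated by a finite linear combination of indicators whose coefficients are measurable with respect to the correct $\sigma$-field $\mathcal{F}_{a_i}$: a clean way is to project orthogonally onto the nested, increasing finite-dimensional spans of dyadic indicators, so that the coefficients are continuous linear functionals of the profile (hence $\mathcal{F}_{a_i}$-measurable), and convergence in $L^2(\Omega;\HH)$ follows from the contraction property of projections together with dominated convergence. With that supplied, your extension step (ii) is routine and transfers the isometry to the limit exactly as you describe.
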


Denote by $\mathcal{D}(\RR)$ the space of real-valued infinitely differential functions on $\RR$ with compact support. Let $\mathcal H$  be
	  the Hilbert space obtained by completing $\mathcal D(\RR) $ with respect to the following scalar  product:
	  \begin{equation}\label{eq H product}
\begin{split}
     \langle\varphi,\psi\rangle_{\mathcal H}
   =&\,c_{1,H}\int_{\RR} \cF\varphi(\xi) \overline{\cF \psi(\xi)}  \cdot |\xi|^{1-2H}d\xi\\
     =&\, c_{2,H}\int_{\RR^2}[\varphi(x+y)-\varphi(x)]\cdot [\psi(x+y)-\psi(x)]\cdot |y|^{2H-2} dxdy, \ \, \forall \varphi,\psi\in \mathcal D(\mathbb R),
        \end{split}
   \end{equation}
   where $c_{1,H}$ and $c_{2,H}$ are defined by \eqref{e.c1} and \eqref{e.c3}, respectively.
 By Proposition \ref{prop 2.3}, for any orthonormal basis $\{e_k\}_{k\ge1}$ of the Hilbert space $\HH$, the family of processes
\begin{equation}\label{eq int 10}
\left\{B_t^k:=\int_0^t\int_{\mathbb{R}}e_k(y)W(ds,dy)\right\}_{k\ge1}
\end{equation}
is a sequence of independent standard Wiener processes and the process
$
B_t:=\sum_{k\ge1}B_t^ke_k
$
is a cylindrical Brownian motion on $\HH$.
It is well-known that (see \cite{DaPrato1992} or \cite{DQ}) for any $\HH$-valued predictable process
 $g\in L^2(\Omega\times[0,T];\HH)$,
we can define the stochastic integral with respect to the cylindrical Wiener process $B$ as follows:					
\begin{equation}\label{eq int2}
\int_0^T g(s)dB_s:=\sum_{k\ge1}\int_0^T \langle g(s),e_k\rangle_{\HH} dB_s^k.
\end{equation}
  Note that the above series converges in $L^2(\Omega, \FF,\mathbb P)$ and the sum does not depend on
the selected orthonormal basis. Moreover, each summand, in the above series, is a classical It\^o
integral with respect to a standard Brownian motion.

 Let $(B,\| \cdot \|_B)$ be a Banach space  with the norm $\| \cdot \|_B$.  Let   $H\in(\frac{1}{4},\frac{1}{2})$ be a fixed number.
 For  any  function $f:\RR\rightarrow B$,  denote
 \begin{equation}\label{NBNorm}
   \cN_{\frac{1}{2}-H}^{B}f(x):=\lt(\int_{\RR}\|f(x+h)-f(x)\|_B^2\cdot |h|^{2H-2}dh\rt)^{\frac 12},
 \end{equation}
 if the above quantity is finite.
 When $B=\RR$, we abbreviate the notation $\cN_{\frac{1}{2}-H}^{\RR}f$  as  $\cN_{\frac{1}{2}-H}f$.
 As in \cite{HHLNT2017}, when $B=L^p(\Omega)$, we  denote $ \cN_{\frac{1}{2}-H}^{B}$ by $\cN_{\frac{1}{2}-H,\,p}$, that is,
 \begin{equation}\label{NpNorm}
   \cN_{\frac{1}{2}-H,\,p}f( x):=\lt(\int_{\RR}\|f(x+h)-f(x)\|^2_{L^p(\Omega)} \cdot |h|^{2H-2}dh\rt)^{\frac 12}.
 \end{equation}

The following    Burkholder-Davis-Gundy's     inequality is well-known
(see e.g.,  \cite{HHLNT2017,HW2019,LHW2022}).

 \begin{proposition}\label{HBDG}(\cite[Proposition 3.2]{HHLNT2017})
   Let $W$ be the Gaussian noise with the covariance \eqref{CovStru}, and let    $f\in\Lambda_H$  be a predictable random field. Then, we have that, for any $p\geq2$,
   \begin{equation}
     \begin{split}
        \lt\|\int_{0}^{t}\int_{\RR}f(s,y) W(ds,dy)\rt\|_{L^p(\Omega)}
     \leq \sqrt{4p}c_{H}\lt(\int_{0}^{t}\int_{\RR}\lk\cN_{\frac 12-H,\,p}f(s,y)\rk^2dyds\rt)^{\frac 12},\label{e.bdg}
     \end{split}
   \end{equation}
   where $c_{H}$ is a constant depending only on $H$ and
   $\cN_{\frac 12-H,\,p}f(s,y)$ denotes the application of $\cN_{\frac 12-H,\,p} $  to the spatial variable $y$.
 \end{proposition}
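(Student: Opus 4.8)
The plan is to reduce the estimate to the classical scalar Burkholder--Davis--Gundy inequality by exploiting that the noise is white in time, and then to pass from the abstract $\HH$-norm of the quadratic variation to the concrete seminorm $\cN_{\frac12-H,\,p}$ by two applications of Minkowski's integral inequality.

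First I would realize the stochastic integral as a continuous martingale in the time variable. Using the orthonormal basis $\{e_k\}_{k\ge1}$ of $\HH$ and the independent standard Brownian motions $B^k$ of \eqref{eq int 10}, the representation \eqref{eq int2} gives
\[
M_t:=\int_0^t\int_{\RR}f(s,y)\,W(ds,dy)=\sum_{k\ge1}\int_0^t\langle f(s,\cdot),e_k\rangle_{\HH}\,dB_s^k .
\]
It suffices to prove the bound for elementary processes, where this identity is transparent, and then to pass to the $L^2(\Omega)$-limit afforded by Proposition \ref{prop 2.3} (a.s. convergence along a subsequence together with Fatou's lemma yields the $L^p$-bound). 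As a sum of It\^o integrals against independent standard Brownian motions, $M$ is a continuous martingale whose quadratic variation is, by Parseval's identity in $\HH$,
\[
\langle M\rangle_t=\int_0^t\sum_{k\ge1}\langle f(s,\cdot),e_k\rangle_{\HH}^2\,ds=\int_0^t\|f(s,\cdot)\|_{\HH}^2\,ds .
\]

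Next I would invoke the Burkholder--Davis--Gundy inequality for continuous martingales in the form $\|M_t\|_{L^p(\Omega)}\le\sqrt{4p}\,\|\langle M\rangle_t^{1/2}\|_{L^p(\Omega)}$, valid for $p\ge2$, which is exactly what produces the constant $\sqrt{4p}$ and gives
\[
\|M_t\|_{L^p(\Omega)}\le\sqrt{4p}\,\Bigl\|\int_0^t\|f(s,\cdot)\|_{\HH}^2\,ds\Bigr\|_{L^{p/2}(\Omega)}^{1/2}.
\]
Applying Minkowski's inequality for integrals to move the $L^{p/2}(\Omega)$-norm inside the time integral, and using the elementary identity $\|g^2\|_{L^{p/2}(\Omega)}=\|g\|_{L^p(\Omega)}^2$, I obtain
\[
\Bigl\|\int_0^t\|f(s,\cdot)\|_{\HH}^2\,ds\Bigr\|_{L^{p/2}(\Omega)}\le\int_0^t\bigl\|\,\|f(s,\cdot)\|_{\HH}\,\bigr\|_{L^p(\Omega)}^2\,ds .
\]

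Finally, I would insert the fractional-derivative expression for $\|\cdot\|_{\HH}$ from \eqref{eq H product} and apply Minkowski's inequality a second time, now over the measure $|y|^{2H-2}\,dx\,dy$, together with the same squaring identity, to reach
\[
\bigl\|\,\|f(s,\cdot)\|_{\HH}\,\bigr\|_{L^p(\Omega)}^2\le c_{2,H}\int_{\RR^2}\|f(s,x+y)-f(s,x)\|_{L^p(\Omega)}^2|y|^{2H-2}\,dx\,dy=c_{2,H}\int_{\RR}\bigl(\cN_{\frac12-H,\,p}f(s,x)\bigr)^2\,dx,
\]
the last equality being the definition \eqref{NpNorm}. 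Combining the three displays yields the claim with $c_H=\sqrt{c_{2,H}}$. The main technical point to handle with care is the martingale/quadratic-variation step: justifying the $L^2(\Omega)$-convergence of the series, the interchange of summation and expectation in the Parseval identity, and the passage from elementary processes to a general $f\in\Lambda_H$ while retaining the $L^p$-bound (this is where the isometry \eqref{Isometry} and Proposition \ref{prop 2.3} are essential). By contrast, the two Minkowski applications are routine once the quadratic variation has been identified.
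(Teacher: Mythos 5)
The paper itself does not prove Proposition \ref{HBDG}; it is imported verbatim from \cite[Proposition 3.2]{HHLNT2017}, so there is no internal proof to compare against. Your argument is correct in its core and is essentially the standard proof of that cited result: identify the stochastic integral with a continuous $L^2$-martingale via the series representation \eqref{eq int2}, compute $\langle M\rangle_t=\int_0^t\|f(s,\cdot)\|^2_{\HH}\,ds$ by Parseval's identity and the independence of the $B^k$, apply the continuous-martingale Burkholder--Davis--Gundy inequality with constant $2\sqrt{p}=\sqrt{4p}$, and then use Minkowski's integral inequality twice (once in $L^{p/2}(\Omega)$ over $ds$, once over the measure $|y|^{2H-2}\,dx\,dy$) together with \eqref{eq H product} and \eqref{NpNorm}, which yields the constant $c_H=c_{2,H}^{1/2}$.

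One detail in your reduction does need correction. The detour through elementary processes plus Fatou, as you state it, does not quite close: Proposition \ref{prop 2.3} approximates $f$ in the $\Lambda_H$-norm, which is an $L^2(\Omega)$-type norm, and this convergence gives no control on the $L^p(\Omega)$-based quantity $\cN_{\frac 12-H,\,p}$ appearing on the right-hand side of \eqref{e.bdg} when $p>2$. Fatou bounds the left-hand side by the liminf of the bounds for the elementary approximants, but the corresponding right-hand sides need not converge to, nor be dominated by, the right-hand side for $f$, so the limit passage as described is gappy. The fix is simply to avoid the approximation at the level of the $L^p$ estimate: since any $f\in\Lambda_H$ lies in $L^2(\Omega\times[0,T];\HH)$, the representation \eqref{eq int2} and the martingale/quadratic-variation identification hold for $f$ directly (elementary processes are needed only, in $L^2$ via the isometry \eqref{Isometry}, to identify the Walsh-type integral of Proposition \ref{prop 2.3} with the cylindrical one), and then BDG plus the two Minkowski steps give \eqref{e.bdg} for $f$ itself with no limiting argument.
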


\subsection{Stochastic wave equation}

Let $\mathcal{C}([0, T]\times\RR)$ be the space of all continuous real-valued functions on $[0,T]\times\mathbb{R}$, equipped with the metric
\begin{equation}
  d_{\mathcal C}(u,v):=\sum_{n=1}^{\infty}\frac{1}{2^n} \max_{0\le t\le T,|x|\leq n}(|u(t,x)-v(t,x)|\wedge 1).\label{e.4.metric}
 \end{equation}
We introduce the solution space $\mathcal{Z}^p(T)$, which is the space of all continuous functions $v:[0, T ]\times \RR\to L^p(\Omega)$ such that the following norm is finite:
 \begin{equation}\label{ZNorm}
 \begin{split}
   \|v\|_{\mathcal{Z}^p(T)}
   :=&\sup_{t\in[0,T]} \|v(t,\cdot)\|_{L^p(\Omega\times\RR)}+\sup_{t\in[0,T]}\cN^*_{\frac 12-H,\,p}v(t),
 \end{split}
 \end{equation}
where
  \begin{equation}\label{LpNormOmegatimesRR}
  \|v(t,\cdot)\|_{L^p(\Omega\times\RR)}:=\lc\int_{\RR} \EE\lk|v(t,x)|^p\rk dx\rc^{\frac 1p}
 \end{equation}
 and
 \begin{equation}\label{N*pNorm}
  \cN^*_{\frac 12-H,\,p}v(t):=\lc\int_{\RR}\|v(t,\cdot)-v(t,\cdot+h)\|^2_{L^p(\Omega\times\RR)}\cdot |h|^{2H-2}dh\rc^{\frac 12}.
 \end{equation}
Denote by $Z^p(T)$ the space of all non-random functions in $\mathcal Z^p(T)$, which is equipped with the following norm:
\begin{equation}\label{YNorm}
\begin{split}
   \|v\|_{ Z^p(T)}
   :=&\,\sup_{t\in[0,T]} \|v(t,\cdot)\|_{L^p(\RR)}+\sup_{t\in[0,T]}\cN_{\frac 12-H,\, p}^*v(t),
 \end{split}
 \end{equation}
 where
\begin{align}\label{Lpnormal_def}
  \|v(t,\cdot)\|_{L^p(\RR)}:=\lc\int_{\RR} |v(t,x)|^p dx\rc^{\frac 1p}
\end{align}
and
 \begin{align}\label{N*pNorm}
   \cN^*_{\frac 12-H,\,p}v(t):=\lc\int_{\RR}\|v(t,\cdot)-v(t,\cdot+h)\|^2_{L^p(\RR)}\cdot |h|^{2H-2}dh\rc^{\frac 12}.
 \end{align}

Let  $G(t,x):=\frac{1}{2}\textbf{1}_{\{|x|<t\}}$, $t\in {\RR}_+$, $x\in\RR$, be  Green's function associated with
Eq.\,\eqref{SWE}. Notice that $G(t,x)$ does not satisfy the semigroup property.

Recall the following   definition of  the solution to Eq.\,\eqref{SWE} from \cite{LHW2022}.
\begin{definition}(\cite[Definition 2.4]{LHW2022})\label{DefMildSol}
   Let $\{u^\e(t,x)\}_{t\in[0,T],x\in\RR}$ be a real-valued adapted  stochastic field such that for all fixed $t\in[0,T]$ and $x\in\RR$, the random field
   $$
   \{G(t-s,x-y)\sigma(s,y,u^\e(s,y))\textbf{1}_{[0,t]}(s),(s,y)\in\RR_+\times\RR\}
   $$
   is integrable with respect to $W$. The stochastic
   process $u^\e$ is called a strong  solution to \eqref{SWE},  if for all $t\in[0,T]$ and $x\in\RR$ we have almost surely
   \begin{equation}\label{MildSol}
   \begin{split}
     u^\e(t,x)
     =&\, I_0(t,x)+\sqrt{\e}\int_{0}^{t}\int_{\RR} G(t-s,x-y)\sigma(s,y,u^\e(s,y))W(ds,dy),
  \end{split}
   \end{equation}
where
  \begin{equation}\label{I_0Definition}
   \begin{split}
   I_0(t,x):=&\,\frac{\partial}{\partial t}G(t)\ast u_0(x)+G(t)\ast v_0(x)\\
   =&\,\frac{1}{2}[u_0(x+t)+u_0(x-t)]+\frac{1}{2}\int^{x+t}_{x-t}v_0(y)dy.
   \end{split}
   \end{equation}
\end{definition}
The following Hypothesis   guarantees that Eq.\,\eqref{SWE} admits a unique solution.
\begin{Hypothesis}\label{Hypoth.1} We assume that the following two items hold:
 \begin{itemize}
\item[(i).] assume $\sigma(t, x, u)\in\cC^{0,1,1}([0, T]\times\RR^2)$ (the space of all continuous functions $\sigma$, with continuous partial derivatives $\sigma'_x$, $\sigma'_u$ and $\sigma''_{xu}$), satisfying $\sigma(t,x,0)=0$ and there exists a constant $C>0$ such that
  \begin{align}
  \sup_{t\in[0,T], \,x\in\RR,\, u\in\RR} |\sigma'_u(t,x,u)| &\leq C\,; \label{DuSigam}
  \end{align}

  \begin{align}
  \sup_{t\in[0,T], \,x\in\RR, \, u\in\RR} |\sigma''_{xu}(t,x,u)| &\leq C\,; \label{DuxSigam}
  \end{align}

  \begin{equation}\label{DuSigamAdd}
   \sup_{t\in[0,T],  \,x\in\RR}\left| \sigma'_u(t,x,u_1)-\sigma'_u(t,x,u_2)\right| \le C  |u_1-u_2| , \,\,\,\forall u_1,u_2\in \RR;
  \end{equation}
\item[(ii).]the initial condition $u_0$ and $v_0$ are $\alpha$-H\"older continuous with $\alpha\in(0,1]$.
Furthermore, we  assume that $I_0(t,x)$ is in $Z^p(T)$  for some $p>\frac{2}{4H-1}$.
\end{itemize}
\end{Hypothesis}

\begin{remark}
Notice that by \eqref{DuSigam}, we get the Lipschitz continuity of $\sigma$ in $u$ (uniformly in $t$ and $x$), i.e.,

\begin{equation}\label{Lipschitzianuniformlycondu}
\sup_{t\in[0,T],\,x\in\RR}\left|\sigma(t, x, u)-\sigma(t, x, v)\right|\leq C|u-v|,\,\,\,\forall u, v\in\RR;
\end{equation}
for some constant $C>0$, which together with the assumption $\sigma(t,x,0)=0$ implies that

\begin{equation}\label{lineargrowthuniformlycond}
      \sup_{t\in[0,T],\,x\in\RR}\left|\sigma(t, x, u)\right|\leq C|u|,\,\,\,\forall u\in\RR.
\end{equation}

\end{remark}

The following theorem  follows from \cite{LHW2022}.
\begin{theorem}(\cite[Theorem 2.6]{LHW2022})\label{ModUniq}
Assume that Hypothesis \ref{Hypoth.1} holds.
 Then Eq.\,\eqref{SWE} admits a unique strong solution in $\mathcal C([0, T]\times\RR)$ almost surely.
\end{theorem}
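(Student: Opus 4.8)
The plan is to construct the solution by a Picard iteration in the Banach space $\mathcal Z^p(T)$ with $p>\frac{2}{4H-1}$, and then to promote the fixed point to an almost surely continuous field by a Kolmogorov-type argument. First I would set $u^\e_0(t,x):=I_0(t,x)$, which lies in $Z^p(T)\subset\mathcal Z^p(T)$ by Hypothesis \ref{Hypoth.1}(ii), and define inductively
$$u^\e_{n+1}(t,x):=I_0(t,x)+\sqrt\e\int_0^t\int_{\RR}G(t-s,x-y)\sigma(s,y,u^\e_n(s,y))\,W(ds,dy).$$
The first task is to show that the stochastic convolution operator $\Phi$ maps $\mathcal Z^p(T)$ into itself. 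For the $L^p(\Omega\times\RR)$ part of the norm I would apply the Burkholder--Davis--Gundy inequality of Proposition \ref{HBDG}, which reduces the moment of the integral to the spatial $\cN_{\frac12-H,p}$-seminorm of the integrand $y\mapsto G(t-s,x-y)\sigma(s,y,u^\e_n(s,y))$. Since $\sigma$ is Lipschitz in $u$ and vanishes at $u=0$ by Hypothesis \ref{Hypoth.1}(i), and is regular in $x$, a Leibniz-type splitting of this seminorm separates a term controlled by $\|u^\e_n\|_{\mathcal Z^p}$ from a term carrying the spatial increments of the kernel $G$.

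The heart of the matter, and the main obstacle, is controlling those spatial increments of the wave kernel. Because $G(t,x)=\frac12\mathbf 1_{\{|x|<t\}}$ is merely an indicator, its increments concentrate on the light-cone boundary and cannot be estimated naively against the singular weight $|h|^{2H-2}$; moreover $G$ enjoys no semigroup property, so the standard Gronwall machinery used for the heat equation is unavailable. Here I would invoke the four-part decomposition of $G(t-s,x-y)$ from Lemma \ref{3.1wavekernelexp} and estimate each piece separately, arriving at a bound of the form
$$\sup_{t\le\tau}\|\Phi u(t,\cdot)\|^2_{L^p(\Omega\times\RR)}+\sup_{t\le\tau}\big(\cN^*_{\frac12-H,p}\Phi u(t)\big)^2\ls\int_0^\tau\big(1+\|u\|^2_{\mathcal Z^p(s)}\big)\,\phi(\tau-s)\,ds,$$
for an integrable kernel $\phi$; the requirement $p>\frac{2}{4H-1}$ (which also forces $H>\frac14$) is exactly what renders the relevant integrals of $|h|^{2H-2}$ against the kernel pieces finite. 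A Volterra--Gronwall lemma adapted to the convolution kernel $\phi$ then yields $u^\e_n\in\mathcal Z^p(T)$ with a bound uniform in $n$.

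Next I would establish the contraction. Using the Lipschitz estimate \eqref{Lipschitzianuniformlycondu} for $\sigma$ and rerunning the same kernel estimates on the difference $u^\e_{n+1}-u^\e_n$, one obtains
$$\|u^\e_{n+1}-u^\e_n\|^2_{\mathcal Z^p(\tau)}\ls\int_0^\tau\|u^\e_n-u^\e_{n-1}\|^2_{\mathcal Z^p(s)}\,\phi(\tau-s)\,ds.$$
Iterating this convolution inequality produces factorially decaying constants, so $\sum_n\|u^\e_{n+1}-u^\e_n\|_{\mathcal Z^p(T)}<\infty$ and $(u^\e_n)$ converges in $\mathcal Z^p(T)$ to a limit $u^\e$ that solves \eqref{MildSol}. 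Uniqueness follows by applying the same estimate to the difference of two solutions and invoking the Volterra--Gronwall lemma.

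Finally, to upgrade the solution from $\mathcal Z^p(T)$ to $\mathcal C([0,T]\times\RR)$ almost surely, I would derive moment bounds of Hölder type on the space and time increments $\|u^\e(t,x)-u^\e(t',x')\|_{L^p(\Omega)}$, once more through Proposition \ref{HBDG} and the decomposition of Lemma \ref{3.1wavekernelexp}, controlling $I_0$ by its $\alpha$-Hölder continuity from Hypothesis \ref{Hypoth.1}(ii). The condition $p>\frac{2}{4H-1}$ guarantees strictly positive Hölder exponents, so the Kolmogorov continuity criterion applies and furnishes an almost surely continuous modification of $u^\e$ on $[0,T]\times\RR$, completing the proof.
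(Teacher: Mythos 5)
Your first two steps are fine, but the contraction step is where the argument genuinely breaks, and it is precisely the obstruction that forces the cited proof to take a different route. The self-mapping bound works because for a \emph{single} field $u$ the spatial increments of $y\mapsto\sigma(s,y,u(s,y))$ are controlled, via \eqref{Lipschitzianuniformlycondu} and \eqref{DuxSigam}, by $(|h|\wedge 1)|u(s,y+h)|+|\mathcal{D}_h u(s,y)|$, so the $\cN^*_{\frac12-H,\,p}$ part of the norm closes on itself (this is the paper's $A_1,A_2,A_3$ / $I_1,I_2,I_3$ computation in Lemma \ref{UniBExist}). For the \emph{difference} of two fields the analogous bound is false. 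Setting $\Delta\sigma(s,y):=\sigma(s,y,u_n(s,y))-\sigma(s,y,u_{n-1}(s,y))$ and writing it by the mean value theorem, the increment $\Delta\sigma(s,y+h)-\Delta\sigma(s,y)$ contains, besides the good term $\mathcal{D}_h(u_n-u_{n-1})(s,y)$, the cross term
\begin{equation*}
\bigl|u_n(s,y)-u_{n-1}(s,y)\bigr|\cdot\Bigl(\bigl|\mathcal{D}_h u_n(s,y)\bigr|+\bigl|\mathcal{D}_h u_{n-1}(s,y)\bigr|\Bigr)
\end{equation*}
(compare \eqref{490-490} in the paper). Integrating its square against $|h|^{2H-2}dh$ produces $|u_n(s,y)-u_{n-1}(s,y)|^2\cdot\bigl(\bigl[\cN_{\frac12-H}u_n(s,y)\bigr]^2+\bigl[\cN_{\frac12-H}u_{n-1}(s,y)\bigr]^2\bigr)$, i.e.\ you need a \emph{pointwise in $(x,\omega)$} bound on $\cN_{\frac12-H}$ of the previous iterates. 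This quantity is not controlled by the $\mathcal{Z}^p(T)$ norm of anything; it is a random variable, a.s.\ finite but unbounded over $\Omega$, so there is no deterministic convolution kernel $\phi$ making your displayed contraction inequality true, and the Banach fixed point argument collapses. (A secondary inaccuracy: the kernel-increment integrals of Lemma \ref{TechLemma3} are finite for every $H\in(\frac14,\frac12)$ independently of $p$; the condition $p>\frac{2}{4H-1}$ is needed for the pointwise $\cN_{\frac12-H}$ and H\"older estimates of Lemma \ref{TimeSpaceRegBdd} type, not for those integrals.)

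This failure is exactly why the paper does not prove Theorem \ref{ModUniq} by direct Picard iteration: it cites \cite{LHW2022}, and the strategy there (which the present paper reproduces for its own related results, see Proposition \ref{thm solu skeleton} and Lemmas \ref{lemma-5-1}, \ref{utildebar}, together with the explicit remark that ``it is difficult to use a Picard iteration scheme\dots directly'') is structurally different. Existence: mollify the covariance or the noise (the space $\HH_\e$ with bounded kernel $f_\e$ in \eqref{eq u h e}, or the smoothed noise $W_\eta$ in \eqref{tildeueetaDEF}); for the mollified equation the boundedness of the smoothed kernel lets Picard iteration contract in the plain $\sup_t L^p$ norm, avoiding the cross term entirely; then prove $\mathcal{Z}^p(T)$ bounds and H\"older continuity \emph{uniformly} in the mollification parameter and pass to the limit by tightness / Arzel\`a--Ascoli. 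Uniqueness: run the difference estimate above, but handle the cross term by the a priori pointwise bound on $\cN_{\frac12-H}$ of the two solutions, localized by stopping times of the form \eqref{Stoptime1}, and conclude by Gronwall before letting the stopping level tend to infinity. Your outline is missing both ideas (mollification for existence; stopping times plus pointwise $\cN_{\frac12-H}$ bounds for uniqueness), and without them the central estimate you assert is simply not available.
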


\section{Freidlin-Wentzell large deviations and statement of the main result}

\subsection{A general criterion for the large deviation principle}\label{A Criteria for Large Deviations}
Let us first recall some standard definitions and results of the large deviation theory. Let $\{X^\e\}_{\e>0} $ be a family
of random variables defined on a probability space $(\Omega,\mathcal{F},\mathbb{P})$ and taking values in a Polish space $E$.
Roughly speaking, the LDP concerns the exponential decay of the probability measures
of certain kinds of extreme or tail events and the rate of such exponential decay is expressed by the rate function.

 \begin{definition}\label{Dfn-Rate function}
     A function $I: E\rightarrow[0,\infty]$ is called a rate function on
       $E$,
       if for each $M<\infty$ the level set $\{y\in E:I(y)\leq M\}$ is a compact subset of $E$.

    \end{definition}

    \begin{definition}(Large deviation principle)  \label{d:LDP}
       Let $I$ be a rate function on $E$. The sequence
       $\{X^\e\}_{\e>0}$
       is said to satisfy  a large deviation principle on $E$ with the rate function $I$, if the following two
       conditions
       hold:
\begin{itemize}
\item[(a).]   for each closed subset $F$ of $E$,
              $$
                \limsup_{\e\rightarrow 0}\e \log\mathbb{P}(X^\e\in F)\leq -\inf_{y\in F}I(y);
              $$

        \item[(b).]  for each open subset $G$ of $E$,
              $$
                \liminf_{\e\rightarrow 0}\e \log\mathbb{P}(X^\e\in G)\geq-\inf_{y\in G}I(y).
              $$
              \end{itemize}
    \end{definition}

Due to the representation formula of \eqref{eq int 10}, our noise  $W$  can be identified as a sequence of independent, standard, real valued Brownian motions. Set $\mathbb{V}=C([0,T];\mathbb R^{\infty})$.
Let $\{\Gamma^{\varepsilon}\}_{\varepsilon >0}$ be a family of measurable maps from $\mathbb V$ to $E$.
We  recall a criterion for  the LDP of the family $X^{\varepsilon}=\Gamma^{\varepsilon}( W)$ as $\varepsilon \rightarrow 0$.

Define the following space of stochastic processes:
\begin{equation}\label{eq: space}
\mathcal L_2:=\left\{\phi: \Omega\times [0,T]\rightarrow \mathcal H \text{  is predictable and  } \int_0^T\|\phi(s)\|_{\mathcal H}^2ds<\infty, \ \ \mathbb{P}\text{-a.s.} \right\}.
\end{equation}
For each $N\geq1$, let
\begin{equation}\label{eq: space SN}
S^N=\left\{g\in L^2([0,T];\mathcal H): \int_0^T\|g(s)\|_{\mathcal{H}}^2ds\le N  \right\},
\end{equation}
where  $L^2([0,T];\mathcal H)$ is the space of square integrable $\mathcal H$-valued functions on $[0,T]$.
Here and in the sequel of this paper, we will always refer to the weak topology on the set $S^N$. Set $\mathbb S=\bigcup_{N\ge1} S^N$, and
$$
\mathcal U^N=\left\{g\in \mathcal L_2: g(\omega)\in S^N, \,\,\mathbb{P}\text{-a.s.} \right\}.
$$
\begin{condition}\label{Aa} There exists a measurable mapping $\Gamma^0:\mathbb V\rightarrow E$ such that the following two items hold:
\begin{itemize}
\item[(a).]  for every $N<+\infty$, the set $K_N=\left\{\Gamma^0\left(\int_0^{\cdot}   g(s)ds\right): g\in S^N\right\}$ is a compact subset of $E$;
\item[(b).] for every $N<+\infty$ and  $\{g^\e\}_{\e>0}\subset \mathcal U^N$ satisfying that  $g^\e$ converges in distribution (as $S^N$-valued random elements) to $g$, $\Gamma^\e\left(W+\frac{1}{\sqrt\e} \int_0^{\cdot}  g^{\e}(s)ds\right)$ converges in distribution to $\Gamma^0\left(\int_0^{\cdot}  g(s)ds\right)$.
 \end{itemize}
  \end{condition}
  Let
   $I:E \rightarrow [0,\infty]$ be defined  by
\beq\label{rate function}
I(\phi):=\inf_{\left\{g\in \mathbb{S};\,\phi=\Gamma^0\left(\int_0^{\cdot}g(s)ds\right)\right\}}  \left\{\frac12\int_0^T\|g(s)\|_{\mathcal{H}}^2ds\right\},\  \phi\in E,
\nneq
where the infimum over an empty set is taken as $+\infty$.

The following result is due to Budhiraja et al. \cite{BDM2008}.
\begin{theorem}(\cite[Theorem 6]{BDM2008})\label{thm MSZ} For  any $\e>0$, let $X^{\e}=\Gamma^\e(W)$ and suppose that Condition \ref{Aa} holds. Then,  the laws of  $\{X^{\e}\}_{\e>0}$ satisfy an LDP with the rate function  $I$ defined by \eqref{rate function}.
\end{theorem}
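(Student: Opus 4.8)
The statement is the abstract transfer principle of the weak convergence approach, so the plan is to deduce the large deviation principle from the variational (Bou\'e--Dupuis) representation of exponential functionals of the cylindrical Brownian motion $W$, using only Condition \ref{Aa}. First I would pass from the LDP to the equivalent \emph{Laplace principle}: since a good rate function makes the two formulations equivalent (see \cite{DE1997}), and since the goodness of $I$ in \eqref{rate function} follows from the compactness in part (a) of Condition \ref{Aa} together with the weak lower semicontinuity of $g\mapsto\frac12\int_0^T\|g(s)\|_{\mathcal H}^2ds$ (so that $\{I\le M\}$ is a closed subset of the compact set $K_{2M+1}$), it suffices to prove that for every bounded continuous $h\in C_b(E)$,
\begin{equation*}
\lim_{\e\to0}\Big(-\e\log\mathbb E\big[e^{-h(X^\e)/\e}\big]\Big)=\inf_{\phi\in E}\big\{h(\phi)+I(\phi)\big\}.
\end{equation*}
The main tool is the representation formula (\cite{BD}): for each $\e>0$,
\begin{equation*}
-\e\log\mathbb E\big[e^{-h(X^\e)/\e}\big]=\inf_{u\in\mathcal L_2}\mathbb E\bigg[\frac12\int_0^T\|u(s)\|_{\mathcal H}^2\,ds+h\Big(\Gamma^\e\big(W+\tfrac{1}{\sqrt\e}\textstyle\int_0^\cdot u(s)\,ds\big)\Big)\bigg],
\end{equation*}
which is obtained from the Bou\'e--Dupuis formula after the scaling substitution replacing the control $u$ by $u/\sqrt\e$, so that the shift matches the one appearing in part (b) of Condition \ref{Aa}.

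For the \emph{Laplace upper bound} (equivalently the LDP lower bound) I would fix $\delta>0$, pick $\phi_0$ with $h(\phi_0)+I(\phi_0)\le \inf_\phi\{h+I\}+\delta$, and by the definition \eqref{rate function} of $I$ choose a deterministic $g\in S^N$ with $\phi_0=\Gamma^0(\int_0^\cdot g\,ds)$ and $\frac12\int_0^T\|g\|_{\mathcal H}^2ds\le I(\phi_0)+\delta$. Using $u\equiv g$ as an admissible control in the representation bounds the left side from above; the constant sequence $g^\e\equiv g$ trivially converges in distribution to $g$, so part (b) gives $\Gamma^\e(W+\tfrac1{\sqrt\e}\int_0^\cdot g)\to\phi_0$ in distribution, and boundedness and continuity of $h$ yield $\limsup_\e(-\e\log\mathbb E[e^{-h(X^\e)/\e}])\le h(\phi_0)+\frac12\int\|g\|^2\le \inf_\phi\{h+I\}+2\delta$, which suffices since $\delta$ is arbitrary.

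For the \emph{Laplace lower bound} (the LDP upper bound) I would take near-optimal controls $u^\e\in\mathcal L_2$ in the representation. Because $h$ is bounded, the energies $\mathbb E[\frac12\int_0^T\|u^\e\|_{\mathcal H}^2ds]$ are uniformly bounded, which permits a localization replacing $u^\e$ by controls in $\mathcal U^N$ for $N$ large, up to arbitrarily small error. Since $S^N$ is weakly compact and metrizable, $\{u^\e\}$ is tight, so along a subsequence $u^\e\Rightarrow g$ as $S^N$-valued random elements; part (b) then yields $\Gamma^\e(W+\tfrac1{\sqrt\e}\int_0^\cdot u^\e)\Rightarrow\Gamma^0(\int_0^\cdot g)$ jointly. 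Weak lower semicontinuity of the energy, Fatou's lemma, and the inequality $I(\Gamma^0(\int g))\le\frac12\int\|g\|^2$ coming from \eqref{rate function} give $\liminf_\e(-\e\log\mathbb E[e^{-h(X^\e)/\e}])\ge \mathbb E[h(\Gamma^0(\int g))+I(\Gamma^0(\int g))]\ge\inf_\phi\{h+I\}$. Combining the two bounds proves the Laplace principle and hence the LDP.

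The step I expect to be the main obstacle is the Laplace lower bound: converting the a priori energy bound into a genuine localization onto $\mathcal U^N$ and extracting joint weak convergence of the controls \emph{and} the shifted processes (handled via Skorokhod representation or Jakubowski's theorem on non-metric spaces), together with the weak lower semicontinuity needed to pass the energy term through the limit. The upper bound, by contrast, is essentially a one-line application of part (b) of Condition \ref{Aa} to a fixed deterministic control.
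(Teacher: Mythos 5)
This theorem is not proved in the paper at all: it is quoted from Budhiraja, Dupuis and Maroulas \cite[Theorem 6]{BDM2008}, and the paper's actual work consists of verifying Condition \ref{cond1} (which implies Condition \ref{Aa}), not of proving the abstract transfer principle itself. So there is no in-paper argument to compare against; what your sketch reconstructs is the proof in the cited reference, and it has the right architecture: reduction of the LDP to the Laplace principle (legitimate once $I$ is a good rate function), the Bou\'e--Dupuis variational representation with the $1/\sqrt{\e}$-scaled controls, the constant-control argument for the Laplace upper bound via part (b) of Condition \ref{Aa}, and, for the Laplace lower bound, the localization of near-optimal controls into $\mathcal U^N$ (using boundedness of $h$ and Chebyshev), tightness from weak compactness of $S^N$, part (b) along a subsequence, and portmanteau/Fatou with weak lower semicontinuity of the energy. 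This is exactly the route of \cite{BDM2008}.

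There is, however, one step whose justification fails as written, although its conclusion is true: the goodness of $I$. You claim $\{I\le M\}$ is closed ``by the compactness in part (a) together with the weak lower semicontinuity of $g\mapsto\frac12\int_0^T\|g(s)\|_{\HH}^2ds$.'' Try to implement this: take $\phi_n\rightarrow\phi$ with $I(\phi_n)\le M$, pick nearly optimal $g_n\in S^{2M+1}$ with $\phi_n=\Gamma^0\left(\int_0^{\cdot}g_n(s)ds\right)$, and extract a weak limit $g$; lower semicontinuity controls the energy of $g$, but to conclude $I(\phi)\le M$ you must identify $\phi$ with $\Gamma^0\left(\int_0^{\cdot}g(s)ds\right)$, and that requires continuity of $\Gamma^0$ on $S^N$ in the weak topology. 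That continuity is part (a) of the MSZ Condition \ref{cond1}, \emph{not} of Condition \ref{Aa}, whose part (a) only asserts compactness of the image $K_N$; under Condition \ref{Aa} alone your closedness argument does not go through. The standard repair needs no continuity at all: directly from \eqref{rate function} one has the set identity $\{I\le M\}=\bigcap_{\delta>0}K_{2(M+\delta)}$ (if $\phi\in K_{2(M+\delta)}$ for every $\delta>0$ then $I(\phi)\le M+\delta$ for every $\delta>0$, and conversely), so each level set is an intersection of compact sets, hence compact. With that one-line replacement, your proposal is a correct rendering of the proof of \cite[Theorem 6]{BDM2008}.
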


We now formulate the following sufficient condition established in  \cite{MSZ}  for verifying the assumptions in Condition \ref{Aa} for LDPs, which is convenient in some applications.
  \begin{condition}\label{cond1} There exists a measurable mapping $\Gamma^0:\mathbb V\rightarrow E$ such that the following two items hold:
\begin{itemize}
\item[(a).] for every $N<+\infty$ and any family $\{g^n\}_{n\ge1}\subset S^N$ that converges to some element $g$ in $S^N$ as $n\rightarrow\infty$, $\Gamma^0\left(\int_0^{\cdot} g^n(s)ds\right)$ converges to  $\Gamma^0\left(\int_0^{\cdot}   g(s)ds\right)$ in the space $E$;
   \item[(b).] for every $N<+\infty$, $\{g^\e\}_{\e>0}\subset \mathcal U^N$  and $\delta>0$,
    $$\lim_{\e\rightarrow 0}\mathbb P\big(\rho(Y^{\e}, Z^{\e})>\delta\big)=0, $$
     where  $Y^{\e}=\Gamma^\e\left(W+\frac{1}{\sqrt\e} \int_0^{\cdot}  g^{\e}(s)ds\right), Z^{\e}=\Gamma^0\left(\int_0^{\cdot}  g^{\e}(s)ds\right)$ and $\rho(\cdot, \cdot)$ stands for the metric in the space $E$.
 \end{itemize}
  \end{condition}
\subsection {Statement of the main result}
In this paper, we are concerned with the following one (spatial) dimensional nonlinear SWE:
\begin{equation*}
\left\{\begin{split}
   &  \frac{\partial^2 u^{\e}(t,x)}{\partial t^2}=\frac{\partial^2 u^{\e}(t,x)}{\partial x^2}+\sqrt{\e}\sigma(t,x,u^{\e}(t,x))\dot{W}(t,x),\quad   t>0,\,
   x\in\RR,\\
   &u^{\e}(0,\cdot)=u_0(x),
  \quad\frac{\partial u^{\e}(0,x)}{\partial t}=v_0(x).
 \end{split}\right.
\end{equation*}
Under Hypothesis \ref{Hypoth.1}, by Theorem \ref{ModUniq}, there exists a unique solution $u^{\e}\in \mathcal{C}([0,T]\times\mathbb{R})$ a.s..  Therefore, there exists a Borel-measurable function $\Gamma^\e: C([0,T];\mathbb R^{\infty}) \to \mathcal{C}([0,T]\times\mathbb{R})$
such that
\begin{equation}\label{eq Gamma e}
u^{\e}(\cdot)=\Gamma^\e(W(\cdot)).
\end{equation}

For every $g\in\mathbb{S}$,  we consider the following deterministic integral equation (the skeleton equation)
\begin{equation*}
u^g(t,x)=I_0(t,x)+\int_0^t\langle G(t-s,x-\cdot)\sigma(s, \cdot, u^g(s, \cdot)), g(s,\cdot)\rangle_{\mathcal H}ds, \ \ \ t\ge0,\,\, x\in \mathbb R.
 \end{equation*}
 The solution $u^g$, whose existence will be proved in the next section, defines a measurable mapping $\Gamma^0:C([0,T];\mathbb R^{\infty}) \to \mathcal{C}([0,T]\times\mathbb{R}) $ so that
\begin{equation}\label{eq Gamma0}
u^g(\cdot)=\Gamma^0\left(\int_0^\cdot g(s)ds\right).
\end{equation}

Here is the main result of this paper.
\begin{theorem}\label{thm LDP}  Assume that Hypothesis \ref{Hypoth.1} holds.
Then, the family $\{u^{\e}\}_{\e>0}$ in Eq.\,\eqref{SWE} satisfies an LDP in the space  $\cC([0, T]\times\RR)$ with the rate function $I$ given by
\begin{equation}\label{eq rate}
I(\phi):=\inf_{\left\{g\in \mathbb S;\, \phi=\Gamma^0\left(\int_0^{\cdot}  g(s)ds\right)\right\} }\left\{ \frac12 \int_0^{T}\|g(s)\|_{\mathcal H}^2ds \right\}.
\end{equation}
\end{theorem}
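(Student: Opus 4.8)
The plan is to derive the LDP from the general weak-convergence criterion rather than directly from the definition. By Theorem \ref{thm MSZ} it suffices to verify Condition \ref{Aa} for the maps $\Gamma^\e$, $\Gamma^0$ introduced in \eqref{eq Gamma e}--\eqref{eq Gamma0}; and since Condition \ref{cond1} is a sufficient condition for Condition \ref{Aa}, I would instead establish its two items (a) and (b). Here $\Gamma^0$ is the solution operator of the skeleton equation, whose well-posedness in $\cC([0,T]\times\RR)$ (in fact in $Z^p(T)$ for $p>\frac{2}{4H-1}$) is the content of the next section, which I would take as given. The recurring tools throughout are the wave-kernel decomposition of Lemma \ref{3.1wavekernelexp}, the Burkholder--Davis--Gundy inequality of Proposition \ref{HBDG}, and the $\cN^*_{\frac12-H,p}$-seminorm estimates built into the norm of $\mathcal Z^p(T)$ and $Z^p(T)$.

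For item (a) I would fix $N$ and a sequence $g^n\to g$ in the weak topology of $S^N$, and prove $u^{g^n}\to u^g$ in $\cC([0,T]\times\RR)$. First I would establish a bound on $\sup_n\|u^{g^n}\|_{Z^p(T)}$ uniform in $n$, using the linear growth \eqref{lineargrowthuniformlycond} of $\sigma$, the Cauchy--Schwarz bound $|\langle G(t-s,x-\cdot)\sigma,g^n(s)\rangle_{\HH}|\le \|G(t-s,x-\cdot)\sigma\|_{\HH}\,\|g^n(s)\|_{\HH}$ together with $\int_0^T\|g^n(s)\|_{\HH}^2\,ds\le N$, the kernel estimates of Lemma \ref{3.1wavekernelexp}, and a Gronwall argument. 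I would then estimate $u^{g^n}-u^g$ by splitting the drift into a term carrying $\sigma(\cdot,u^{g^n})-\sigma(\cdot,u^g)$, controlled by the Lipschitz property \eqref{Lipschitzianuniformlycondu} and absorbed by Gronwall, and a term carrying $g^n-g$, which vanishes because weak convergence in $S^N$ sends $\int_0^t\langle G(t-s,x-\cdot)\sigma(s,\cdot,u^g(s,\cdot)),(g^n-g)(s,\cdot)\rangle_{\HH}\,ds$ to zero; the equicontinuity needed to pass from pointwise to uniform convergence on compacts follows from the same $Z^p(T)$ bounds via the embedding underlying the well-posedness theory.

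For item (b) I would invoke Girsanov's theorem to identify $Y^\e=\Gamma^\e\big(W+\tfrac{1}{\sqrt\e}\int_0^\cdot g^\e\,ds\big)$ as the solution of the controlled equation $Y^\e(t,x)=I_0(t,x)+\sqrt\e\int_0^t\int_\RR G(t-s,x-y)\sigma(s,y,Y^\e(s,y))\,W(ds,dy)+\int_0^t\langle G(t-s,x-\cdot)\sigma(s,\cdot,Y^\e(s,\cdot)),g^\e(s,\cdot)\rangle_{\HH}\,ds$, whereas $Z^\e=\Gamma^0\big(\int_0^\cdot g^\e\,ds\big)$ solves the same equation without the stochastic term. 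Subtracting, the difference $Y^\e-Z^\e$ is driven by the $\sqrt\e$-scaled stochastic integral, whose $\mathcal Z^p(T)$-norm I would bound by Proposition \ref{HBDG} and a uniform bound on $\|Y^\e\|_{\mathcal Z^p(T)}$ (obtained as in item (a), now using $g^\e\in\mathcal U^N$), plus a drift term carrying $\sigma(\cdot,Y^\e)-\sigma(\cdot,Z^\e)$ handled by \eqref{Lipschitzianuniformlycondu}, the energy constraint $\int_0^T\|g^\e(s)\|_{\HH}^2\,ds\le N$, and Gronwall's lemma. This gives an estimate $\|Y^\e-Z^\e\|_{\mathcal Z^p(T)}\lesssim C_N\sqrt\e\to0$, which I would upgrade to convergence in the metric $d_{\mathcal C}$ through the Hölder/Sobolev embedding inherent in the $Z^p(T)$-estimates and convert to $\lim_{\e\to0}\mathbb P\big(d_{\mathcal C}(Y^\e,Z^\e)>\delta\big)=0$ by Chebyshev's inequality.

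The main obstacle, in both items, is the analysis forced by the rough spatial noise $H\in(\frac14,\frac12)$ together with the absence of the semigroup property for the wave kernel $G(t,x)=\frac12\textbf{1}_{\{|x|<t\}}$. Controlling the relevant $\HH$-norms and $\cN^*_{\frac12-H,p}$-seminorms of $G(t-s,x-\cdot)\sigma(s,\cdot,u(s,\cdot))$ requires the four-part decomposition of Lemma \ref{3.1wavekernelexp} and delicate fractional-increment estimates. In item (a) the genuinely subtle point is the continuity of the drift functional under the weak topology of $S^N$: since $G$ is merely an indicator kernel and $\langle\cdot,\cdot\rangle_{\HH}$ is expressed through fractional derivatives, the weak convergence $g^n\rightharpoonup g$ must be paired with strong precompactness of $\{G(t-s,x-\cdot)\sigma(s,\cdot,u^g(s,\cdot))\}$ in $\HH$, which is where the regularity afforded by Hypothesis \ref{Hypoth.1} and the kernel decomposition is indispensable.
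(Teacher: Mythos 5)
Your top-level strategy coincides with the paper's: reduce Theorem \ref{thm LDP} to Theorem \ref{thm MSZ} plus the two items of Condition \ref{cond1}, verify item (a) by continuity of the skeleton map under weak convergence of controls (this matches Proposition \ref{thm continuity skeleton}: uniform $Z^p(T)$ bounds, Arzel\`a--Ascoli, splitting the drift into a Lipschitz part handled by Gronwall and a $g^n-g$ part killed by weak convergence plus uniform integrability), and verify item (b) by Girsanov and a comparison of the controlled SPDE with the skeleton equation. Item (a) as you outline it is essentially the paper's argument.

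In item (b), however, there is a genuine gap: the claimed direct Gronwall estimate $\|Y^{\e}-Z^{\e}\|_{\mathcal Z^p(T)}\lesssim C_N\sqrt{\e}$ does not close. When you estimate the $\mathcal N^{*}_{\frac12-H,\,p}$-seminorm of the drift difference, the spatial increment of $\sigma(s,\cdot,Y^{\e})-\sigma(s,\cdot,Z^{\e})$ produces (see \eqref{490-490}) the cross term $|Y^{\e}-Z^{\e}|^2(s,y)\cdot\big(|\mathcal D_{z}Y^{\e}(s,y)|^2+|\mathcal D_{z}Z^{\e}(s,y)|^2\big)$, which after integration against $|z|^{2H-2}dz$ becomes $|Y^{\e}-Z^{\e}|^2\cdot\big([\mathcal N_{\frac12-H}Y^{\e}]^2+[\mathcal N_{\frac12-H}Z^{\e}]^2\big)$. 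To absorb this into $\|Y^{\e}-Z^{\e}\|^2_{L^p(\Omega\times\RR)}$ you need an \emph{almost sure} pointwise bound on $\sup_{s,x}\mathcal N_{\frac12-H}Y^{\e}(s,x)$; but this quantity only has finite $L^p(\Omega)$ moments (Lemmas \ref{lemma-5-1} and \ref{lemma-5.2}), and applying H\"older instead degrades the exponent so the Gronwall loop no longer closes in the same norm. This is precisely why the paper localizes with the stopping times $\tau_k$ of \eqref{Stoptime1}: on $[0,\tau_k]$ the offending factor is bounded by $k$, yielding the $k^2$-dependent estimate \eqref{5147} and hence $\|\tilde u^{\e}_k-\bar u^{\e}_k\|_{\mathcal Z^p(T)}\to0$ for each fixed $k$ (Lemma \ref{utildebar}); the localization is then removed using $\tau_k\uparrow\infty$ \eqref{stop-infinit}. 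A second, related issue is your final step: Chebyshev applied to a $\mathcal Z^p(T)$ bound does not yield convergence in $d_{\mathcal C}$, because $\mathcal Z^p(T)$ controls spatial $L^p$-norms rather than sup-norms. The paper instead establishes tightness of $\{\tilde u^{\e}-\bar u^{\e}\}_{\e>0}$ in $\mathcal C([0,T]\times\RR)$, identifies any weak limit as $0$ using the stopped $L^p$-convergence together with \eqref{stop-infinit}, and concludes via the fact that convergence in distribution to a constant implies convergence in probability (Proposition \ref{proposition-4-3}). Without the localization and this limit-identification step, your item (b) argument does not go through as written.
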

 \begin{proof}
According to Theorem \ref{thm MSZ}, we only need to prove that  Condition \ref{cond1} is fulfilled. The verification of Condition \ref{cond1} (a) will be given by  Proposition \ref{thm continuity skeleton}. Condition \ref{cond1} (b) will be established in
 Proposition \ref{proposition-4-3}.
The proof is complete.
\end{proof}

\section{Skeleton equation}
In this section, we study the well-posedness of the skeleton equation:
\begin{equation}\label{eq skeleton}
u^g(t,x)=I_0(t,x)+\int_0^t\langle G(t-s,x-\cdot)\sigma(s, \cdot, u^g(s, \cdot)), g(s,\cdot)\rangle_{\mathcal H}ds, \ \ \ t\ge0,\,\, x\in \mathbb R,
 \end{equation}
where $g\in\mathbb{S}$ and $I_0(t,x)$ is defined by \eqref {I_0Definition}.

For any $p\geq 2$, $H\in(\frac 14,\frac 12)$, recall the space $Z^p(T)$ and its norm defined by \eqref{YNorm}.
We have the following well-posedness result for the skeleton equation \eqref{eq skeleton}.
\begin{proposition}\label{thm solu skeleton}
Assume that Hypothesis \ref{Hypoth.1} holds.
  Then, Eq.\,\eqref{eq skeleton} admits a unique solution in $\mathcal{C}([0, T]\times\mathbb R)$. In addition, $\sup_{g\in S^N}\|u^g\|_{Z^p(T)}<\infty$ for any $N\geq 1$ and $p\geq 2$.
  \end{proposition}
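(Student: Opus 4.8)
The plan is to construct the solution by a Picard iteration in the Banach space $Z^p(T)$, following the scheme used to prove Theorem \ref{ModUniq} but replacing the Burkholder--Davis--Gundy inequality (Proposition \ref{HBDG}) with the Cauchy--Schwarz inequality in $\mathcal H$ together with the constraint $\int_0^T\|g(s)\|_{\mathcal H}^2\,ds\le N$. Concretely, set $u_0^g(t,x):=I_0(t,x)$ and define, for $n\ge0$,
\begin{equation*}
u_{n+1}^g(t,x):=I_0(t,x)+\int_0^t\big\langle G(t-s,x-\cdot)\sigma(s,\cdot,u_n^g(s,\cdot)),\,g(s,\cdot)\big\rangle_{\mathcal H}\,ds.
\end{equation*}
The first task is to check that each integrand lies in $\mathcal H$ so that the inner product is well defined; this uses Hypothesis \ref{Hypoth.1}(i), which gives $\sigma(s,\cdot,0)=0$ together with the global Lipschitz and linear-growth bounds \eqref{Lipschitzianuniformlycondu}--\eqref{lineargrowthuniformlycond}, and the wave-kernel decomposition of Lemma \ref{3.1wavekernelexp}.

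The heart of the argument is a single a priori estimate. By Cauchy--Schwarz in the $s$-variable and in $\mathcal H$,
\begin{equation*}
\Big|\int_0^t\big\langle G(t-s,x-\cdot)\sigma(s,\cdot,v(s,\cdot)),g(s,\cdot)\big\rangle_{\mathcal H}ds\Big|
\le \sqrt N\Big(\int_0^t\big\|G(t-s,x-\cdot)\sigma(s,\cdot,v(s,\cdot))\big\|_{\mathcal H}^2\,ds\Big)^{1/2},
\end{equation*}
so the problem reduces to bounding $\int_0^t\|G(t-s,x-\cdot)\sigma(s,\cdot,v)\|_{\mathcal H}^2\,ds$ in terms of $\|v\|_{Z^p(T)}$, separately for the two pieces of the $Z^p(T)$-norm in \eqref{YNorm}: the $L^p(\mathbb R)$ part and the increment seminorm $\mathcal N^*_{\frac12-H,p}$. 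Here I would decompose $G(t-s,x-\cdot)$ according to Lemma \ref{3.1wavekernelexp}, estimate the $\mathcal H$-norm of the product $G\cdot\sigma$ via the Leibniz-type rule for the fractional increments underlying \eqref{NBNorm}--\eqref{NpNorm}, and invoke the Lipschitz bound on $\sigma$ to pass from $\sigma(s,\cdot,v)$ back to $v$. Writing $\|\cdot\|_{Z^p(t)}$ for the running norm with the supremum taken over $[0,t]$, this yields a recursion of the form
\begin{equation*}
\|u_{n+1}^g\|_{Z^p(t)}^2\le C\,\|I_0\|_{Z^p(T)}^2 + C\,N\int_0^t \Phi(t-s)\,\|u_n^g\|_{Z^p(s)}^2\,ds,
\end{equation*}
with a locally integrable kernel $\Phi$ carrying the fractional time-singularity.

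From here the remaining steps are routine. A (fractional) Gronwall lemma applied to the above recursion gives $\sup_n\|u_n^g\|_{Z^p(T)}<\infty$ with a bound depending only on $N$, $T$, $p$ and $\|I_0\|_{Z^p(T)}$; since $N$ enters only through the explicit factor above, this bound is uniform over $g\in S^N$, which is exactly the claimed $\sup_{g\in S^N}\|u^g\|_{Z^p(T)}<\infty$. Running the same estimate on the differences $u_{n+1}^g-u_n^g$ (where the constant term $I_0$ drops out and the Lipschitz bound on $\sigma$ produces $\|u_n^g-u_{n-1}^g\|$) shows that $\{u_n^g\}$ is Cauchy in $Z^p(T)$, hence converges to a solution $u^g$; applying it to the difference of two solutions yields uniqueness via Gronwall. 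Finally, to upgrade from $Z^p(T)$ to $\mathcal C([0,T]\times\mathbb R)$, I would establish H\"older-type bounds on the temporal and spatial increments of $u^g$ and use that $p>\frac{2}{4H-1}$ to obtain a Sobolev/Kolmogorov-type embedding of the fractional regularity encoded in $\mathcal N^*_{\frac12-H,p}$ into continuity.

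The main obstacle is the single estimate of $\|G(t-s,x-\cdot)\sigma(s,\cdot,v)\|_{\mathcal H}$: because $H<\frac12$ the space $\mathcal H$ measures fractional spatial increments of order $\frac12-H$, while the wave kernel $G(t,x)=\frac12\mathbf{1}_{\{|x|<t\}}$ is merely an indicator and is not differentiable, so the increments of the product $G\cdot\sigma$ cannot be controlled naively. This is precisely where the four-part decomposition of Lemma \ref{3.1wavekernelexp} is indispensable, and where essentially all of the work concentrates; once that estimate is in hand with the correct integrable time-singularity, the fixed-point, uniqueness, uniformity and continuity steps follow the template of Theorem \ref{ModUniq}.
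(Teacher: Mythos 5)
Your a priori bound (Cauchy--Schwarz against $\int_0^T\|g(s)\|_{\mathcal H}^2\,ds\le N$, then a recursion in $Z^p(T)$ closed by a fractional Gronwall lemma) is essentially how the paper obtains $\sup_{g\in S^N}\|u^g\|_{Z^p(T)}<\infty$, and that part does close in $Z^p(T)$: for a single function $v$, the spatial increment of $\sigma(s,\cdot,v(s,\cdot))$ splits, using \eqref{Lipschitzianuniformlycondu}, \eqref{lineargrowthuniformlycond} and \eqref{DuxSigam}, into pieces controlled by $\|v(s,\cdot)\|_{L^p(\RR)}$ and $\mathcal N^*_{\frac 12-H,\,p}v(s)$. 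The genuine gap is in your contraction step. When you run ``the same estimate on the differences,'' you must bound the $\mathcal H$-norm of $G(t-s,x-\cdot)\big[\sigma(s,\cdot,u_n^g(s,\cdot))-\sigma(s,\cdot,u_{n-1}^g(s,\cdot))\big]$, and the spatial increment of $\sigma(\cdot,u_n^g)-\sigma(\cdot,u_{n-1}^g)$ is \emph{not} controlled by increments of $u_n^g-u_{n-1}^g$ alone: exactly as in the paper's display \eqref{490-490}, a cross term of the form $|u_n^g-u_{n-1}^g|\cdot\big(|\mathcal{D}_z u_n^g|+|\mathcal{D}_z u_{n-1}^g|\big)$ appears, and after integration against $|z|^{2H-2}dz$ it is bounded only in terms of $\sup_{t,x}\mathcal N_{\frac12-H}u_n^g(t,x)$ --- a pointwise quantity that is not part of the $Z^p(T)$-norm. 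So the iteration map is not Lipschitz on $Z^p(T)$, the ``Cauchy in $Z^p(T)$'' claim does not follow from the Lipschitz property of $\sigma$, and the same obstruction blocks your uniqueness-by-Gronwall step unless one first proves \eqref{Nug}-type pointwise bounds, which require $p>\frac{2}{4H-1}$ (not merely $p\ge2$) and the appendix machinery (Lemmas \ref{LEMMA3.4} and \ref{D_hJtheKestimate}, i.e.\ the analogue of Lemma \ref{TimeSpaceRegBdd}(i)).

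This is precisely the difficulty the paper flags (``Due to the complexity of the space $\mathcal H$, it is difficult to use a Picard iteration scheme \dots directly'') and circumvents by a different route: it replaces $\mathcal H$ with the regularized space $\mathcal H_\e$ of \eqref{eq product H e}, whose kernel $f_\e$ is bounded, so that the Picard scheme for the approximate equation \eqref{eq u h e} contracts in $L^p(\RR)$ alone --- the contraction constant involves $\|f_\e\|_\infty$ and blows up as $\e\to0$, which is why this cannot be done with $\mathcal H$ itself. It then proves $Z^p(T)$-bounds uniform in $\e$ and $g\in S^N$ (Lemma \ref{UniBExist}, using \eqref{eq H compare}), uniform H\"older continuity (Lemma \ref{TimeSpaceRegBdd}), extracts a limit along $\e_n\downarrow0$ by Arzel\`a--Ascoli, and passes to the limit in the equation via Lemma \ref{lemma3.3limits}; uniqueness is then a separate Gronwall argument for the limit equation that relies on the pointwise bounds \eqref{Nug}. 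To repair your proof you would either have to adopt this regularization, or first establish uniform-in-$n$ pointwise bounds $\sup_{t,x}\mathcal N_{\frac12-H}u_n^g(t,x)\le C$ for your iterates before attempting the contraction --- a substantially heavier argument than the one you sketch, and one for which the restriction $p>\frac{2}{4H-1}$ would have to enter your hypotheses.
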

 Due to the complexity of the space $\mathcal H$, it is difficult to use a Picard iteration scheme to prove the existence of the solution of Eq.\,\eqref{eq skeleton} directly. We use the approximation method by introducing a new Hilbert space $\mathcal H_{\e}$ as follows.

 For every fixed $\varepsilon>0$, let
\begin{equation}\label{eq f e}
f_{\e}(x):=\mathcal F^{-1}\left(e^{-\e |\xi|^2}|\xi|^{1-2H} \right)(x)=\frac{1}{2\pi} \int_{\mathbb R} e^{i\xi x}e^{-\e |\xi|^2}|\xi|^{1-2H}  d\xi.
	\end{equation}
For any $\varphi, \psi \in \mathcal{D}(\RR)$, we define
	\begin{equation}\label{eq product H e}
    \begin{split}
     \langle\varphi,\psi\rangle_{\mathcal H_{\e}}
     :=&\,c_{1,H}\int_{\RR} \cF \varphi(\xi) \overline{\cF \psi(\xi)} e^{-\e|\xi|^2} |\xi|^{1-2H}d\xi\\
     =&\,c_{1, H}\int_{\mathbb{R}^2}\varphi(x)\psi(y)f_{\e}(x-y)dxdy,
   \end{split}
   \end{equation}
   where  $c_{1,H}$ is given by  \eqref{e.c1}.    Let $\mathcal H_{\e}$  be
	  the Hilbert space obtained by completing $\mathcal D(\RR) $ with respect to the scalar  product given by  \eqref{eq product H e}.
	 Notice that for any $0\le \e_1<\e_2$, we have  that for any $\varphi\in \mathcal H_{\e_1}$,
	 \begin{align}\label{eq H compare}
  \|\varphi\|_{\mathcal H_{\e_1}}\ge \|\varphi\|_{\mathcal H_{\e_2}},
	 \end{align}
and for any $\varphi, \psi\in \mathcal H$, by the dominated convergence theorem,
$$
\lim_{\e\rightarrow0}\, \langle \varphi, \psi \rangle_{\HH_{\e}}=\langle \varphi, \psi \rangle_{\HH}.
$$

For any $g\in \mathbb{S}$, let
\begin{align}\label{eq u h e}
		u_{\ep}^{g}(t,x)=I_0(t,x) + \int_0^t \langle G(t-s,x-\cdot)\sigma(s,\cdot, u^g_{\e}(s,\cdot)), g(s,\cdot) \rangle_{\HH_\e}ds.
\end{align}
 Since $|\xi|^{1-2H}e^{-\varepsilon |\xi|^2}$ is in $L^1(\mathbb R)$, $|f_{\varepsilon}|$ is bounded.  Thus, using the Picard iteration, the existence and uniqueness of the solution $u^{g}_{\e}$ to Eq.\,\eqref{eq u h e} can be proved in a similar (but easier) way to that for  Eq.\,(3.24) in \cite{LHW2022}.

The following lemma asserts that the approximate solution $u_\e^g$  is uniformly bounded in the space $Z^p(T)$ with respect to $\e>0$.
 \begin{lemma}\label{UniBExist}

Let $H\in(\frac 14,\frac 12)$ and $g\in \mathbb{S}$.
Assume that Hypothesis \ref{Hypoth.1} holds.
Then the approximate solution $u^g_\e$ satisfies that, for any $p\ge2$,
   \begin{equation}\label{ReguBdd}
     \sup_{g\in S^N}\sup_{\e>0}\|u_\e^g\|_{Z^p(T)}<\infty.
   \end{equation}
 \end{lemma}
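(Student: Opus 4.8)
The plan is to estimate separately the two constituents of the $Z^p(T)$-norm in \eqref{YNorm}, namely $\mathcal A(t):=\|u_\e^g(t,\cdot)\|_{L^p(\RR)}$ and the seminorm $\mathcal B(t):=\cN^*_{\frac12-H,p}u_\e^g(t)$, and then to close a coupled integral inequality for $\mathcal A+\mathcal B$ by a Gronwall argument, noting that for each fixed $\e$ the quantity $\sup_{t}(\mathcal A(t)+\mathcal B(t))$ is already known to be finite from the construction of $u_\e^g$, so that the content is the uniformity. The contribution of $I_0$ is finite and independent of $\e$ and $g$ by Hypothesis \ref{Hypoth.1}(ii), since $I_0\in Z^p(T)$. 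The first reduction, which is what makes the eventual bound uniform in $\e$, removes the regularisation from the inner products in \eqref{eq u h e}: applying the Cauchy-Schwarz inequality in $\HH_\e$ and then the monotonicity \eqref{eq H compare} with $\e_1=0$, $\e_2=\e$ (so that $\HH_0=\HH$) gives, for every $x$,
\[
\lt|\langle G(t-s,x-\cdot)\sigma(s,\cdot,u_\e^g(s,\cdot)),g(s,\cdot)\rangle_{\HH_\e}\rt|\le \|g(s,\cdot)\|_{\HH}\,\|G(t-s,x-\cdot)\sigma(s,\cdot,u_\e^g(s,\cdot))\|_{\HH},
\]
so that all remaining $\e$-dependence sits in $u_\e^g$ and every kernel is measured in the fixed space $\HH$. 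Here the deterministic pairing against $g$ plays the role played by the stochastic integral (and Proposition \ref{HBDG}) in \cite{LHW2022}.

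Next I would produce the core spatial estimates. For $\mathcal A$, Minkowski's integral inequality together with the display above yields
\[
\mathcal A(t)\le \|I_0(t,\cdot)\|_{L^p(\RR)}+\int_0^t\|g(s,\cdot)\|_{\HH}\lc\int_\RR\|G(t-s,x-\cdot)\sigma(s,\cdot,u_\e^g(s,\cdot))\|_{\HH}^p\,dx\rc^{\frac1p}ds,
\]
so the task is to bound $\lc\int_\RR\|G(t-s,x-\cdot)\sigma(s,\cdot,v)\|_{\HH}^p\,dx\rc^{1/p}$ by $\Phi(t-s)\blc\|v(s,\cdot)\|_{L^p(\RR)}+\cN^*_{\frac12-H,p}v(s)\brc$ for an integrable weight $\Phi$. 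Since $\|f\|_\HH^2=c_{2,H}\int_\RR(\cN_{\frac12-H}f(y))^2dy$, this forces one to handle the spatial increments of the product $G(t-s,x-\cdot)\sigma(s,\cdot,v)$, which I would split according to the four-term decomposition of the wave kernel in Lemma \ref{3.1wavekernelexp}; the smooth factor is then absorbed using the growth and Lipschitz bounds \eqref{lineargrowthuniformlycond}, \eqref{Lipschitzianuniformlycondu} and \eqref{DuxSigam} on $\sigma$ and its derivatives. The seminorm $\mathcal B$ is treated in the same spirit: its spatial increment introduces $G(t-s,(x+h)-\cdot)-G(t-s,x-\cdot)$, and after the same Cauchy-Schwarz/\eqref{eq H compare} reduction one must estimate the $\HH$-norm of the $h$-increment of $G(t-s,x-\cdot)\sigma$, weighted by $|h|^{2H-2}$ and integrated; the decomposition of Lemma \ref{3.1wavekernelexp} again reduces this to manageable pieces and gives
\[
\mathcal B(t)\le \cN^*_{\frac12-H,p}I_0(t)+\int_0^t\|g(s,\cdot)\|_{\HH}\,\Psi(t-s)\blc\mathcal A(s)+\mathcal B(s)\brc ds
\]
with $\Psi$ integrable on $[0,T]$, and an analogous inequality for $\mathcal A$.

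To close, I would add the two inequalities, obtaining
\[
\mathcal A(t)+\mathcal B(t)\le C_0+\int_0^t\|g(s,\cdot)\|_{\HH}\,\big(\Phi+\Psi\big)(t-s)\blc\mathcal A(s)+\mathcal B(s)\brc ds,
\]
where $C_0$ is controlled by $\|I_0\|_{Z^p(T)}$. Since $g\in S^N$, the Cauchy-Schwarz inequality gives $\int_0^T\|g(s,\cdot)\|_\HH\,ds\le\sqrt{TN}$, so $s\mapsto\|g(s,\cdot)\|_\HH$ lies in $L^1([0,T])$ with a norm bounded uniformly over $g\in S^N$; combined with the integrability of $\Phi+\Psi$ this lets me apply a generalised Gronwall lemma to conclude that $\sup_{t\in[0,T]}(\mathcal A(t)+\mathcal B(t))$ is bounded by a constant depending only on $N$, $T$, $H$, $p$ and $\|I_0\|_{Z^p(T)}$. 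As this bound is independent of both $\e>0$ and $g\in S^N$, it yields \eqref{ReguBdd}.

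The main obstacle is the first spatial estimate, i.e.\ controlling $\|G(t-s,x-\cdot)\sigma(s,\cdot,v)\|_{\HH}$ uniformly in $x$ after taking $L^p(\RR)$: because $G(t,x)=\frac12\1_{\{|x|<t\}}$ is only an indicator, its spatial increments are genuinely singular and a naive bound diverges, which is precisely why the four-part decomposition of Lemma \ref{3.1wavekernelexp} is needed to separate the jump of $G$ from the increment of $\sigma$. A secondary, more routine difficulty is to verify that the resulting weights $\Phi,\Psi$ are integrable on $[0,T]$ and that the Gronwall step tolerates the merely $L^2$-in-time factor $\|g(\cdot)\|_\HH$; this is eased by the fact that $\|G(t,\cdot)\|_\HH\simeq t^{H}$ stays bounded on $[0,T]$, so the wave kernel contributes an integrable rather than a strongly singular weight.
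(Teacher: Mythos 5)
Your overall architecture is sound and, in its essentials, parallel to the paper's: the $\e$-uniformity enters exactly where you place it (Cauchy--Schwarz in $\HH_\e$ followed by the monotonicity \eqref{eq H compare}), the three-way splitting of the $\HH$-norm of $G(t-s,x-\cdot)\sigma$ is the right move, and the weights $\Phi,\Psi$ you want are indeed bounded on $[0,T]$, so a Gronwall argument with the $L^1$-in-time factor $\|g(s,\cdot)\|_{\HH}$ closes. The paper differs in one structural point: it never estimates the solution directly, but rather the Picard iterates $u_\e^{g,n}$, closes the bound uniformly in $n$ via the extension of Gronwall's lemma \cite[Lemma 15]{Dalang1999}, and then transfers it to $u_\e^g$ by Fatou's lemma. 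Your shortcut of running Gronwall on $u_\e^g$ itself needs the a priori information that $\sup_{t\in[0,T]}\big(\mathcal A(t)+\mathcal B(t)\big)<\infty$ for each fixed $\e$, i.e.\ that the construction delivers $u_\e^g\in Z^p(T)$ and not merely a solution; within this paper that is only asserted by analogy with \cite{LHW2022}, and the iterate-plus-Fatou route is precisely what makes the lemma self-contained. This is repairable, but you should flag it as an imported assumption rather than something "already known".

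The genuine flaw is in your "main obstacle" paragraph. You claim that the spatial increments of $G$ are too singular for a direct estimate and that the four-part decomposition of Lemma \ref{3.1wavekernelexp} is what rescues it. Both claims are wrong. The paper proves exactly the estimates you need by the "naive" splitting (increment of $\sigma$ in its spatial argument, increment of $u$, increment of $G$), and the increment-of-$G$ terms are finite by Lemma \ref{TechLemma3}:
\begin{equation*}
\int_{\RR^2}|\mathcal{D}_hG(t,x)|^2\cdot|h|^{2H-2}\,dh\,dx\le C_H\,t^{2H},
\qquad
\int_{\RR^3}|\Box_{y,h}G(t,x)|^2\,|y|^{2H-2}|h|^{2H-2}\,dy\,dh\,dx\le C_H\,t^{4H-1},
\end{equation*}
which converge precisely because $H>\frac14$. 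In this paper the four-part decomposition is reserved for the pointwise H\"older estimates (Lemma \ref{TimeSpaceRegBdd}, via Lemmas \ref{LEMMA3.4} and \ref{D_hJtheKestimate}), where the fractional-in-time representation \eqref{461-transform} is required; that machinery comes with the constraint $p>\frac1H$ (note $\frac1H>2$ here) and conditions on the auxiliary parameters $\alpha,\theta$, so if your proof of the $Z^p(T)$ bound genuinely routed through it, you could not obtain the conclusion for all $p\ge2$ as the lemma states. Replace the appeal to Lemma \ref{3.1wavekernelexp} by the three-term splitting together with Lemma \ref{TechLemma3}, and the rest of your argument goes through.
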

	  \begin{proof}
This proof is inspired by the proof of \cite[Lemma 3.4]{LHW2022}.
We define the Picard iteration sequence as follows.  For $n=0, 1, 2, \cdots$, let
 \begin{align}\label{4-38}
    u_\ep^{g, n+1}(t,x)=&I_0(t,x)+\int_{0}^{t}\langle G(t-s,x-\cdot)\sigma(s,\cdot,u_\ep^{g, n}(s,\cdot)), g(s,\cdot) \rangle_{\HH_\e}ds,
  \end{align}
with $u_\ep^{g, 0}(t,x)=I_0(t,x)$.
Assume that  there exists some constant $N>0$ such that $\int_0^T \|g(s,\cdot)\|_{\HH}^2ds\le N$. Then, by \eqref{eq H compare}, we know that
 \begin{align}\label{g-interal-bound}
 \int_0^T \|g(s,\cdot)\|_{\HH_{\e}}^2ds\le N,\,\,\,\text{for}\,\,\text{any}\,\,\e>0.
 \end{align}
The rest of the proof is divided into four steps. In Step 1, we prove  the convergence of $u_\e^{g, n}(t,\cdot)$ in $L^p(\mathbb R)$ for any $t\in[0,T]$. In Steps 2 and 3,  we  will bound  $\|u_\e^{g,n}(t,\cdot)\|_{L^p(\RR)}$  and $\mathcal N^*_{\frac12-H,\, p} u_{\e}^{g,n}(t)$ for each fixed $\ep>0$. Step 4 is devoted to  proving that  $u_\e^g$ is   bounded  in  $\left(Z^p(T),\, \|\cdot\|_{Z^p(T)}\right)$ uniformly over    $\e>0$.

{\bf Step 1.}  We will  bound $\|u_\e^{g, n}(t,\cdot)\|_{L^p(\RR)}$ uniformly in $n$  and show that $u_{\e}^{g}(t,\cdot)$ is in $L^p( \RR)$ in this step.  By Cauchy-Schwarz's inequality, \eqref{Lipschitzianuniformlycondu}, \eqref{g-interal-bound}, the boundedness of $f_ {\e}$ and Jensen's inequality with respect to $\frac{1}{t-s}G(t-s,x-y)dy$, we have that  for any $t\in [0,T]$ and $x\in \RR$,
     \begin{align*}
      & |u_\e^{g, n+1}(t,x)-u_\e^{g,n}(t,x)|^2 \\
     =&\, \left|\int_{0}^{t}\left\langle G(t-s,x-\cdot)\big[\sigma(s,\cdot,u_\ep^{g, n}(s,\cdot))-\sigma(s,\cdot,u_\ep^{g, n-1}(s,\cdot))\big], g(s,\cdot)\right\rangle_{\HH_\e}ds\right|^2  \\
     \le  &\,   \int_0^t \|g(s,\cdot)\|_{\HH_\e}^2ds\cdot \int_{0}^{t}\left\|G(t-s,x-\cdot)\left[\sigma(s,\cdot,u_\ep^{g, n}(s,\cdot))-\sigma(s,\cdot,u_\ep^{g, n-1}(s,\cdot))\right]\right\|_{\HH_\e}^2ds\\
     \leq&\, c_{1,H} N    \int_0^t \int_{\RR^2}  G(t-s,x-y)\left[\sigma(s,y,u_\ep^{g, n}(s,y))-\sigma(s,y,u_\ep^{g, n-1}(s,y))\right] \\
     &\qquad\qquad\qquad\quad \cdot G(t-s,x-z)\left[\sigma(s,z,u_\ep^{g, n}(s,z))-\sigma(s,z,u_\ep^{g, n-1}(s,z))\right]f_{\e}(y-z)dydzds\\
     \le&\,c_{1,H}  TN\|f_{\e}\|_{\infty} \cdot\int_0^t\int_{\RR } G(t-s,x-y)\left[\sigma(s,y,u_\ep^{g, n}(s,y))-\sigma(s,y,u_\ep^{g, n-1}(s,y))\right]^2dyds\\
    \le&\,C_{\e, T,N,H}\int_0^t\int_{\RR } G(t-s,x-y)\left|u_\ep^{g, n}(s,y)-u_\ep^{g, n-1}(s,y)\right|^2dyds .
   \end{align*}
By Jensen's inequality with respect to $\frac{1}{t(t-s)}G(t-s,x-y)dyds$ and  a change of variable,   we have that for any $p\ge2$,  $t\in [0,T]$,
  \begin{align}\label{Phi-g-n-l-estimate}
  & \|u_\e^{g, n+1}(t,\cdot)-u_\e^{g, n}(t,\cdot)\|_{L^p(\RR)}^p\notag\\
     \leq&\,C_{\ep,T,N,p,H}\int_{\RR}\left[\int_{0}^{t}\int_{\RR}G(t-s,x-y)\left|u_\ep^{g, n}(s,y)-u_\ep^{g, n-1}(s,y)\right|^2dyds\right]^{\frac{p}{2}}dx\notag\\
      \leq&\,C_{\ep,T,N,p,H}\int_{0}^{t}\int_{\RR}G(t-s,x)dx\cdot\int_{\RR}\left|u_\ep^{g, n}(s,y)-u_\ep^{g, n-1}(s,y)\right|^pdyds\\
       \leq&\,C_{\ep,T,N,p,H}\int_{0}^{t}\left\|u_\ep^{g, n}(s,\cdot)-u_\ep^{g, n-1}(s,\cdot)\right\|^p_{L^p( \RR)}ds\notag\\
     \le &\,C_{\ep,T,N,p,H}^n  \frac{T^n}{n!} \sup_{s\in[0,T]}\|u_\ep^{g, 1}(s,\cdot)-u_\ep^{g, 0}(s,\cdot)\|_{L^p( \RR)}^p.\notag
   \end{align}
 Thus, \eqref{Phi-g-n-l-estimate} implies that
 $$
   \sup_{n\geq1} \sup\limits_{t\in[0,T]}\|u_{\ep}^{g, n}(t,\cdot)\|_ {L^p(\RR)}<\infty, \quad \hbox{ for each $\ep>0$},
  $$
  and that $\{u_{\ep}^{g, n}(t,\cdot)\}_{n\geq1}$ is a Cauchy sequence in $L^p( \RR)$ for any $t\in[0,T]$. Hence, for any fixed $t\in[0,T]$, there exists $u_{\e}^{g}(t,\cdot)\in L^p( \RR)$ such that $u_{\e}^{g, n}(t,\cdot)$ converges to $u_{\e}^{g}(t,\cdot)$ in $L^p( \RR)$ when $n$ goes to infinity.

\textbf{Step 2.} In this step, we will provide a uniform bound of $\|u_\e^{g,n}(t,\cdot)\|_{L^p(\RR)}$ for any fixed $\e>0$. For the simplicity of writing, denote that
\begin{align}\label{TechDt}
\mathcal{D}_hf(t,x):=f(t,x+h)-f(t,x)
\end{align}
and
\begin{align}\label{TechBoxt}
\Box_{h,l}f(t,x):=f(t,x+h+l)-f(t,x+l)-f(t,x+h)+f(t,x),
\end{align}
for any function $f$.

By the Cauchy-Schwarz inequality, \eqref{eq H product}, \eqref{eq H compare} and \eqref{g-interal-bound}, we have
   \begin{align}\label{uepLp}
      |u_\e^{g,n+1}(t,x)|^p
      \lesssim\,&\left|I_0(t,x)\right|^p+\left|\int_{0}^{t}\left\langle G(t-s,x-\cdot)\sigma(s,\cdot,u_\e^{g,n}(s,\cdot)), g(s,\cdot)\right\rangle_{\HH_\e}ds\right|^p\notag\\
     \lesssim\,&\left|I_0(t,x)\right|^p+ \lc\int_{0}^{t}\left\| G(t-s,x-\cdot)\sigma(s,\cdot,u_\e^{g,n}(s,\cdot))\right\|_{\HH_\e}^2ds\rc^{\frac p2}\notag\\
        \lesssim\,&\left|I_0(t,x)\right|^p+\lc\int_{0}^{t}\left\| G(t-s,x-\cdot)\sigma(s,\cdot,u_\e^{g,n}(s,\cdot))\right\|_{\HH}^2ds\rc^{\frac p2} \\
     \simeq\,&\left|I_0(t,x)\right|^p+ \bigg(\int_{0}^{t}\int_{\RR^2}  \Big|G(t-s,x-y-z)\sigma(s,y+z,u_\e^{g,n}(s,y+z))\notag\\
       &\qquad\qquad\qquad-G(t-s,x-y)\sigma(s,y,u_\e^{g,n}(s,y))\Big|^2 \cdot|z|^{2H-2}dydzds\bigg)^{\frac p2}\notag\\
       \lesssim\,& \left|I_0(t,x)\right|^p+A_1(t,x)+A_2(t,x)+A_3(t,x),\notag
   \end{align}
where
   \begin{align*}
     A_1(t,x) :=\,&\bigg(\int_{0}^{t}\int_{\RR^2} |G(t-s,x-y-z)|^2 \cdot  \big|\sigma(s,y+z,u_\e^{g,n}(s,y+z))
    -\sigma(s,y,u_\e^{g,n}(s,y+z))\big|^2\\ &\qquad\quad\cdot|z|^{2H-2}dzdyds\bigg)^{\frac p2} ;\\
     A_2(t,x) :=\,&\bigg(\int_{0}^{t}\int_{\RR^2}  |G(t-s,x-y-z)|^2 \cdot  \big|\sigma(s,y,u_\e^{g,n}(s,y+z))-\sigma(s,y,u_\e^{g,n}(s,y))\big|^2\\
     &\qquad\quad\cdot|z|^{2H-2}dzdyds\bigg)^{\frac p2};\\
     A_3(t,x) :=\,&\bigg( \int_{0}^{t}\int_{\RR^2}|\mathcal{D}_{z}G(t-s,x-y)|^2\cdot\left|\sigma(s,y,u_\e^{g,n}(s,y))\right|^2
             \cdot|z|^{2H-2} dzdyds\bigg)^{\frac p2}.
   \end{align*}
   If $|z|>1$, then we have that by   \eqref{lineargrowthuniformlycond},
   \begin{equation}\label{DuSigamy}
  \begin{split}
   &\left|\sigma(s,y+z,u_\e^{g,n}(s,y))-\sigma(s,y,u_\e^{g,n}(s,y)) \right|^2\\
   \lesssim\,&\left|\sigma(s,y+z,u_\e^{g,n}(s,y)) \right|^2+\left|\sigma(s,y,u_\e^{g,n}(s,y)) \right|^2\\
    \lesssim\,&\left|u_\e^{g,n}(s,y) \right|^2.
      \end{split}
     \end{equation}
If $|z|\leq1$, due to $\sigma(t,x,0)=0$ and \eqref{DuxSigam}, we have
   \begin{equation}\label{DuxSigamy}
   \begin{split}
   \left|\sigma(s,y+z,u_\e^{g,n}(s,y))-\sigma(s,y,u_\e^{g,n}(s,y)) \right|^2
   =\,&\left|\int_0^{u_\e^{g,n}}[\sigma'_{\xi}(s,y+z,\xi)-\sigma'_{\xi}(s,y,\xi)d\xi \right|^2\\
    \lesssim\,& \left|u_\e^{g,n}(s,y) \right|^2\cdot |z|^2.
     \end{split}
     \end{equation}
Since $H\in\left(\frac{1}{4},\frac{1}{2}\right)$, by a change of variable, \eqref{DuSigamy}, \eqref{DuxSigamy} and Minkowski's inequality, we have
   \begin{equation}\label{D1Bdd}
   \begin{split}
 \lc\int_\RR A_1(t,x)dx \rc^{\frac 2p}
\lesssim\,&\lc\int_\RR \bigg(\int_{0}^{t}\int_{\RR}|G(t-s,y)|^2\cdot  \big|u_\e^{g,n}(s,x)\big|^2dyds\bigg)^{\frac p2}dx \rc^{\frac 2p}\\
 \lesssim\,&
 \int_{0}^{t}\int_{\RR}|G(t-s,y)|^2dy\cdot \left(\int_{\RR}\big|u_\e^{g,n}(s,x)\big|^pdx \right)^{\frac 2p}ds\\
 \lesssim\,&\int_{0}^{t} (t-s)\cdot \|u_\e^{g,n}(s,\cdot)\|^2_{L^p(\RR)}ds.
   \end{split}
   \end{equation}
   By a change of variable, \eqref{Lipschitzianuniformlycondu} and  Minkowski's inequality, we have
   \begin{equation}\label{D1Bdd2}
   \begin{split}
 &\lc\int_\RR A_2(t,x)dx \rc^{\frac 2p}\\
 \lesssim\,&\lc\int_\RR \bigg(\int_{0}^{t}\int_{\RR^2}  |G(t-s,y)|^2\cdot  \big|u_\e^{g,n}(s,x+z)-u_\e^{g,n}(s,x)\big|^2\cdot|z|^{2H-2}dzdyds\bigg)^{\frac p2}dx \rc^{\frac 2p}\\
 \lesssim\,&\int_{0}^{t}\int_{\RR}|G(t-s,y)|^2dy\cdot \int_{\RR}\left(\int_{\RR}\big|u_\e^{g,n}(s,x+z)-u_\e^{g,n}(s,x)\big|^pdx \right)^{\frac 2p}\cdot|z|^{2H-2}dzds\\
 \lesssim \,&\int_{0}^{t}(t-s)\cdot \lt[\cN^*_{\frac 12-H,\,p}u_\e^{g,n}(s)\rt]^2 ds.
   \end{split}
   \end{equation}
   By \eqref{lineargrowthuniformlycond}, a change of variable, Minkowski's inequality and Lemma \ref{TechLemma3}, 
   we have
   \begin{align}\label{D1Bdd3}
\lc\int_\RR A_3(t,x)dx \rc^{\frac 2p}
 \lesssim\,&\lc\int_\RR \bigg( \int_{0}^{t}\int_{\RR^2}|\mathcal{D}_{z}G(t-s,y)|^2\cdot\left|u_\e^{g,n}(s,x)\right|^2
             \cdot|z|^{2H-2} dzdyds\bigg)^{\frac p2}dx \rc^{\frac 2p}\notag \\
 \lesssim\,&\int_{0}^{t}\int_{\RR^2}|\mathcal{D}_{z}G(t-s,y)|^2|z|^{2H-2}dzdy\cdot \left(\int_{\RR}\big|u_\e^{g,n}(s,x)\big|^pdx \right)^{\frac 2p}ds\\
 \lesssim\,&\int_{0}^{t}(t-s)^{2H}\cdot \|u_\e^{g,n}(s,\cdot)\|^2_{L^p( \RR)}ds.\notag
   \end{align}
Thus, putting \eqref{uepLp}, \eqref{D1Bdd}, \eqref{D1Bdd2} and  \eqref{D1Bdd3} together, we have
\begin{equation}\label{ueptcdot-350}
\begin{split}
  \|u_\e^{g,n+1}(t,\cdot)\|_{L^p( \RR)}^2
 \ls\,& \left\|I_0(t,\cdot)\right\|^2_{L^p(\RR)}+ \int_{0}^{t} \left((t-s)+(t-s)^{2H}\right)\cdot \|u_\e^{g,n}(s,\cdot)\|^2_{L^p( \RR)}  ds \\
 &+\int_{0}^{t}(t-s) \lt[\cN^*_{\frac 12-H,\,p}u_\e^{g,n}(s)\rt]^2  ds.
\end{split}
\end{equation}

{\bf Step 3.} This step is devoted to estimating $\mathcal N^*_{\frac12-H,\, p} u_\e^{g,n}(t)$ for any fixed $\e>0$.
Using the Cauchy-Schwarz inequality, \eqref{eq H compare} and \eqref{g-interal-bound}, we have
   \begin{align*}
      & \left|u_\e^{g,n+1}(t,x+h)-u_\e^{g,n+1}(t,x)\right|^p \\
     \lesssim\,&
      \left|I_0(t,x+h)-I_0(t,x)\right|^p+\lc\int_{0}^{t}\left\| \mathcal{D}_hG(t-s,x-\cdot)\sigma(s, \cdot, u_\e^{g,n}(s,\cdot))\right\|_{\HH_\e}^2ds\rc^{\frac p2}  \nonumber\\
     \lesssim\,& \left|I_0(t,x+h)-I_0(t,x)\right|^p+
       \bigg( \int_{0}^{t}\int_{\RR^2} \Big| \mathcal{D}_hG(t-s,x-y-z)\sigma(s, y+z, u_\e^{g,n}(s,y+z)) \\
       &\qquad\qquad \qquad\qquad \qquad\qquad- \mathcal{D}_hG(t-s,x-y)\sigma(s, y, u_\e^{g,n}(s,y))\Big|^2\cdot|z|^{2H-2} dydzds\bigg)^{\frac p2} \\
     \lesssim\,&I_0(t,x,h)+ I_1(t,x,h)+I_2(t,x,h)+I_3(t,x,h),
   \end{align*}
   where
   \begin{align*}
   I_0(t,x,h):=\,& \left|I_0(t,x+h)-I_0(t,x)\right|^p;\\
     I_1(t,x,h):=\,&  \bigg(\int_{0}^{t}\int_{\RR^2}\left| \mathcal{D}_hG(t-s,x-y-z)\right|^2 \\
     &\quad \qquad\cdot\big|\sigma(s,y+z,u_\e^{g,n}(s,y+z))-\sigma(s, y, u_\e^{g,n}(s,y+z)) \big|^2\cdot |z|^{2H-2}dydzds\bigg)^{\frac p2};\\
     I_2(t,x,h):=\,&  \bigg(\int_{0}^{t}\int_{\RR^2}\left| \mathcal{D}_hG(t-s,x-y-z)\right|^2 \\
     &\quad\qquad\cdot\big|\sigma(s,y,u_\e^{g,n}(s,y+z))-\sigma(s,y,u_\e^{g,n}(s,y)) \big|^2\cdot |z|^{2H-2}dydzds\bigg)^{\frac p2};\\
     I_3(t,x,h):=\,& \bigg(\int_{0}^{t}\int_{\RR^2} \left|\Box_{h,z}(t-s,x-y)\right|^2\cdot  \big|\sigma(s,y,u_\e^{g,n}(s,y))\big|^2\cdot |z|^{2H-2} dzdyds\bigg)^{\frac p2}.
   \end{align*}
   Therefore, by \eqref{N*pNorm}, we have
   \begin{equation}\label{sum-I-1-3-445}
   \begin{split}
     \lk\cN^*_{\frac 12-H,\,p}u_\e^{g,n+1}(t)\rk^2
     \lesssim&\,\sum_{j=0}^{3}\int_{\RR}\lc \int_\RR I_j(t,x,h) dx\rc^{\frac 2p} \cdot |h|^{2H-2} dh.
   \end{split}
   \end{equation}
By \eqref{DuSigamy}, \eqref{DuxSigamy}, a change of variable, Minkowski's inequality and  Lemma \ref{TechLemma3},  we have
   \begin{equation}\label{I1.Bdd}
   \begin{split}
      &\int_{\RR}\lt| \int_\RR I_1(t,x,h)dx\rt|^{\frac 2p}\cdot |h|^{2H-2} dh\\
        \lesssim\,&\int_{\RR}\bigg[ \int_\RR \bigg(\int_{0}^{t}\int_{\RR}\left| \mathcal{D}_hG(t-s,y)\right|^2 \big|u_\e^{g,n}(s,x) \big|^2 dyds\bigg)^{\frac p2} dx\bigg]^{\frac 2p} \cdot|h|^{2H-2} dh \\
      \lesssim\,&\int_{0}^{t}\int_{\RR^2}\left|  \mathcal{D}_hG(t-s,y)\right|^2\cdot |h|^{2H-2}dhdy\left(\int_\RR\big|u_\e^{g,n}(s,x) \big|^p  dx\right)^{\frac{2}{p}}ds\\
      \lesssim\,& \int_{0}^{t} (t-s)^{2H}\cdot\left\|u_\e^{g,n}(s,\cdot)\right\|_{L^p( \RR)}^{2} ds.
   \end{split}
   \end{equation}
By \eqref{Lipschitzianuniformlycondu},  a change of variable, Minkowski's inequality, Jensen's inequality and Lemma \ref{TechLemma3}, we have
   \begin{align}\label{I2.Bdd}
      &\int_{\RR}\lt| \int_\RR I_2(t,x,h)dx\rt|^{\frac 2p} \cdot|h|^{2H-2} dh \notag\\
       \lesssim\,&\int_{\RR}\bigg[ \int_\RR \bigg(\int_{0}^{t}\int_{\RR^2}\left| \mathcal{D}_hG(t-s,y)\right|^2\cdot \big|u_\e^{g,n}(s,x+z)-u_\e^{g,n}(s,x) \big|^2\cdot |z|^{2H-2}dydzds\bigg)^{\frac p2}dx\bigg]^{\frac 2p} \cdot|h|^{2H-2} dh \notag\\
      \lesssim\,&\int_{0}^{t}\int_{\RR^2}\left| \mathcal{D}_hG(t-s,y)\right|^2 \cdot |h|^{2H-2}dhdy\cdot\int_{\RR}\left(\int_\RR\big|u_\e^{g,n}(s,x+z)-u_\e^{g,n}(s,x) \big|^p dx\right)^{\frac{2}{p}}\cdot |z|^{2H-2}
      dzds\notag\\
      \lesssim\,& \int_{0}^{t} (t-s)^{2H}\cdot \lk\cN^*_{\frac 12-H,\,p}u_\e^{g,n}(s)\rk^2 ds. \notag\\
\end{align}
By  \eqref{lineargrowthuniformlycond},  a change of variable, Minkowski's inequality and  Lemma \ref{TechLemma3}, we have
   \begin{equation}\label{I3_bddestimate}
   \begin{split}
      &\int_{\RR}\lt| \int_\RR I_{3}(t,x,h)dx\rt|^{\frac 2p} \cdot|h|^{2H-2} dh\\
     \lesssim\,&\int_{\RR}\bigg[ \int_\RR \bigg(\int_{0}^{t}\int_{\RR^2}\left|\Box_{h,z}(t-s,y)\right|^2\cdot \big|u_\e^{g,n}(s,x) \big|^2\cdot |z|^{2H-2}dydzds\bigg)^{\frac p2}dx\bigg]^{\frac 2p} \cdot|h|^{2H-2} dh \\
      \lesssim\,&\int_{0}^{t}\int_{\RR^3}|\Box_{h,z}(t-s,y)|^2\cdot\left(\int_\RR|u_\e^{g,n}(s,x)|^pdx\right)^{\frac{2}{p}}
      |h|^{2H-2} \cdot |z|^{2H-2}dhdzdyds \\
       \lesssim\,&\int_{0}^{t} (t-s)^{4H-1}\cdot \|u_\e^{g,n}(s,\cdot)\|^2_{L^p(\RR)}ds.
     \end{split}
     \end{equation}
   Thus, substitute  \eqref{I1.Bdd}, \eqref{I2.Bdd} and \eqref{I3_bddestimate} back into \eqref{sum-I-1-3-445}, we have
   \begin{equation}\label{ueNxing1}
   \begin{split}
     \lk\cN^*_{\frac 12-H,\,p}u_\e^{g,n+1}(t)\rk^2
     \lesssim\,& \lk\cN^*_{\frac 12-H,\,p}I_0(t)\rk^2+\int_{0}^{t} \left((t-s)^{2H}+(t-s)^{4H-1}\right)\cdot  \left\|u_\e^{g,n}(s,\cdot)\right\|_{L^p( \RR)}^{2} ds\\
     &+\int_{0}^{t}(t-s)^{2H}\cdot   \lk\cN^*_{\frac 12-H,\,p}u_\e^{g,n}(s)\rk^2 ds.
   \end{split}
   \end{equation}

{\bf Step 4.}
Define
$$
    \Psi_\e^n(t):=\|u_\e^{g,n}(t,\cdot)\|_{L^p(\RR)}^{2}+\lk\cN^*_{\frac 12-H,\,p}u_\e^{g,n}(t)\rk^2.
  $$
Combining  \eqref{ueptcdot-350} and \eqref{ueNxing1},  there exists a constant $C_{T,p,H,N}$ such that
   \begin{align*}
     \Psi_\e^{n+1}(t)\leq C_{T,p,H,N}\left(\|I_0\|^2_{Z^p(T)} +\int_{0}^{t} (t-s)^{4H-1}\Psi_\e^n(s) ds\right).
  \end{align*}
By the extension of Gronwall's lemma \cite[Lemma 15]{Dalang1999}, we have
\begin{align}\label{eqcgbound}
    \sup_{n\geq 1}\sup_{t\in[0,T]}\Psi_\e^n(t)\leq C,
\end{align}
where $C$ is a constant independent of $\e$ and  $g\in S^N$.

For any fixed $\e>0$, since $u_{\e}^{g, n}(t,\cdot)\rightarrow u_{\e}^{g}(t,\cdot)$  in $L^p(\RR) $  as $n\rightarrow\infty$,
we have that by Fatou's lemma,
  $$ \sup_{g\in S^N}\sup_{\e>0}\|u_\e^{g}\|_{Z^p(T)}\leq \sup_{g\in S^N}\sup_{\e>0}\sup_{t\in[0,T]}\|u^{g}_\ep(t,\cdot)\|_{L^p(\RR)}+
 \sup_{g\in S^N}\sup_{\e>0}\sup_{t\in[0,T]}\cN^*_{\frac 12-H,\,p}u_\e^{g}(t)<\infty.$$

  The proof is complete.
\end{proof}

 According to Lemmas \ref{LEMMA3.4}, \ref{D_hJtheKestimate}  and \ref{Holder_continu_u0v0}, by the same argument as that in the proofs of \cite[Proposition 3.4]{LHW2022} and \cite[Proposition 4.1]{LHW2022}, the following lemma can be proved and  we  omit the details here.
 \begin{lemma}\label{TimeSpaceRegBdd}
Let $u_\ep^g$ be the approximate mild solution defined by \eqref{eq u h e}. Then we have the following results:
   \begin{itemize}
     \item[(i).] if {\bf $p>\frac{2}{4H-1}$}, then there exists a constant $C_{T,p,H ,N}>0$ such that
     \begin{equation}\label{359359}
      \sup\limits_{t\in[0,T],\,x\in \RR}  \cN_{\frac 12-H}u_\ep^g(t,x) \leq C_{T,p,H,N}\|u_\ep^g\|_{Z^p(T)};
     \end{equation}
     \item[(ii).] if $p>\frac{1}{H}$ and
     $0<\gamma< H-\frac{1}{p}$, then there exists a constant $C_{T ,p,H,N,\gamma}>0$ such that
     \begin{equation}\label{Hol1}
       \sup\limits_{\substack{t,\,t+h\in[0,T], \\ x\in \RR}} |u_\e^g(t+h,x)-u_\e^g(t,x)| \leq C_{T ,p,H,N,\gamma}|h|^{\gamma}\cdot\|u_\ep^g\|_{Z^p(T)};
     \end{equation}
     \item[(iii).] if {\bf $p>\frac{1}{H}$ }and
   {\bf$0<\gamma<H-\frac{1}{p}$}, then there exists a constant $C_{T ,p,H,N,\gamma}>0$ such that
     \begin{equation}\label{Lemma-4.3-iii}
       \sup\limits_{\substack{t\in[0,T],\,  x,\,y\in \RR}}|u_\e^g(t,x)-u_\e^g(t,y)|\leq C_{T,p,H,N,\gamma}|x-y|^{\gamma}\cdot\|u_\e^g\|_{Z^p(T)}.
     \end{equation}
   \end{itemize}
 \end{lemma}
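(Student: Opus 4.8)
The plan is to follow the same scheme used for the stochastic solution in \cite[Proposition 3.4, Proposition 4.1]{LHW2022}, exploiting that the skeleton is deterministic so that the Burkholder--Davis--Gundy inequality (Proposition \ref{HBDG}) is replaced by the Cauchy--Schwarz inequality in $\mathcal H_\e$ together with the comparison \eqref{eq H compare}. Concretely, for each of the three increments I would start from the representation \eqref{eq u h e}, split $u_\e^g = I_0 + (\text{integral term})$, and bound the $I_0$-contribution directly by Lemma \ref{Holder_continu_u0v0} (the H\"older regularity of $I_0$ inherited from the $\alpha$-H\"older data $u_0,v_0$). For the integral term I would apply Cauchy--Schwarz in the $s$-variable to factor out $\int_0^t\|g(s,\cdot)\|_{\mathcal H_\e}^2\,ds\le N$ via \eqref{g-interal-bound}, reducing every estimate to controlling an $\mathcal H_\e$-norm of a suitable increment of $G(t-s,x-\cdot)\sigma(s,\cdot,u_\e^g(s,\cdot))$; by \eqref{eq H compare} this is dominated by the corresponding $\mathcal H$-norm, which I can then write out explicitly through \eqref{eq H product} as a space-time integral against the singular weight $|z|^{2H-2}$.

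For part (i), I would insert the representation into the definition \eqref{NBNorm} of $\cN_{\frac12-H}u_\e^g(t,x)$ and split the resulting $\sigma$-difference exactly as in Steps~2--3 of the proof of Lemma \ref{UniBExist}: using $\sigma(t,x,0)=0$ together with \eqref{lineargrowthuniformlycond} when $|z|>1$, and the derivative bound \eqref{DuxSigam} (plus $\sigma(t,x,0)=0$) when $|z|\le1$, and using the Lipschitz bound \eqref{Lipschitzianuniformlycondu} to pass the remaining differences onto $u_\e^g$. After a change of variables and Minkowski's inequality this leaves space-time integrals of the kernel increments, which I would estimate with Lemma \ref{D_hJtheKestimate} and Lemma \ref{TechLemma3}, so that the whole expression is controlled by $\|u_\e^g(s,\cdot)\|_{L^p(\RR)}$ and $\cN^*_{\frac12-H,\,p}u_\e^g(s)$, i.e. by $\|u_\e^g\|_{Z^p(T)}$ through \eqref{YNorm}. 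The restriction $p>\frac{2}{4H-1}$ is precisely what guarantees convergence of the spatial integral coming from the second-order (rectangular) kernel increment.

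For parts (ii) and (iii), I would prove genuine (deterministic) H\"older bounds. For the temporal increment $u_\e^g(t+h,x)-u_\e^g(t,x)$ I would split $\int_0^{t+h}-\int_0^t$ into the near-diagonal piece on $[t,t+h]$ and the piece on $[0,t]$ carrying a time increment of $G$; for the spatial increment $u_\e^g(t,x)-u_\e^g(t,y)$ I would use the first difference $\mathcal D_h G$ and the rectangular difference $\Box_{h,z}G$ as in Step~3 of Lemma \ref{UniBExist}. In both cases the kernel decomposition of Lemma \ref{LEMMA3.4} (equivalently lemma \ref{3.1wavekernelexp}) reduces $G$ to four explicit pieces, and Lemma \ref{D_hJtheKestimate} supplies the increment estimates with the correct powers of $h$; the constraints $p>\frac1H$ and $0<\gamma<H-\frac1p$ arise from balancing the H\"older exponent of the kernel against the $L^p$-integrability of $u_\e^g$, with the uniform bound \eqref{ReguBdd} of Lemma \ref{UniBExist} closing the estimates.

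The main obstacle throughout is the absence of the semigroup property of the wave kernel $G$, which prevents any direct convolution argument and forces the four-part decomposition of Lemma \ref{LEMMA3.4} together with sharp control of the first- and second-order spatial increments of $G$ against the singular weights $|z|^{2H-2}$ and $|h|^{2H-2}$. The most delicate term is the rectangular second difference $\Box_{h,z}G$, whose integrability requires $4H-1>0$ (hence $H>\frac14$) and, in part (i), the stronger integrability threshold $p>\frac{2}{4H-1}$; verifying that these kernel estimates are uniform in $\e>0$ (so that \eqref{eq H compare} can be applied without losing constants) is the technical heart of the argument.
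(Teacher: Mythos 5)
Your treatment of the $I_0$ part (via Lemma \ref{Holder_continu_u0v0}) and the reduction of the integral term by Cauchy--Schwarz, \eqref{g-interal-bound} and \eqref{eq H compare} match the paper's intent, but the core mechanism you describe has a genuine gap: it cannot produce bounds that are uniform (pointwise) in $x$. All three assertions of Lemma \ref{TimeSpaceRegBdd} are suprema over $x\in\RR$, whereas the ``change of variables plus Minkowski'' scheme of Steps 2--3 of Lemma \ref{UniBExist}, which you propose to copy, works only for the integrated quantities $\|u_\e^g(t,\cdot)\|_{L^p(\RR)}$ and $\cN^*_{\frac12-H,\,p}u_\e^g(t)$: there, Minkowski's inequality pulls an outer $L^{p}_x$-norm inside the $(s,y,z,h)$-integrals, and it is precisely this outer $x$-integration, absent in (i)--(iii), that allows the convolution structure to transfer the $y$-integral onto $u_\e^g$ in $L^p$. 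For a pointwise bound one is instead left with integrals such as $\int_\RR|\mathcal{D}_hG(t-s,x-y)|^2\,|u_\e^g(s,y)|^2\,dy$, and to dominate these by $\|u_\e^g(s,\cdot)\|^2_{L^p(\RR)}$ you need H\"older's inequality in $y$, hence $L^{2p/(p-2)}$-estimates of the kernel increments $\mathcal{D}_hG$ and $\Box_{h,z}G$ (and, for part (ii), of \emph{time} increments of $G$), integrated against $|h|^{2H-2}$ and $|z|^{2H-2}$. Lemma \ref{TechLemma3} supplies only the $L^2$-based bounds \eqref{Hu-3-32}--\eqref{Hu-3-33} and nothing about time increments, so it cannot close any of these estimates.

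The second, related, problem is that the two appendix lemmas you invoke are misidentified and, in your setup, unusable. Lemma \ref{LEMMA3.4} is not a ``kernel decomposition'' and Lemma \ref{D_hJtheKestimate} does not estimate increments of $G$: both are $L^p_z$-estimates of the functional $J_\theta^{\mathcal{K}_i}(r,z)$ defined in \eqref{Def_JthetaK_i}, which already contains $u_\e^g$, $\sigma$ and $g$, and they become relevant only after $\Phi_\e^g$ is rewritten through the factorization identity \eqref{461-transform} (Fubini combined with Lemma \ref{3.1wavekernelexp} and the beta integral $\int_s^t(t-r)^{\theta-1}(r-s)^{-\theta}dr=\pi/\sin(\theta\pi)$). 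That identity is the paper's key idea and the step your plan omits: it shifts all $(t,x)$-regularity onto the explicit deterministic outer kernels $(t-r)^{\theta-1}\bar{\mathcal{K}}_i(t-r,x-z)$, which are then paired, via H\"older's inequality in $z$, with the $L^p$-bounds of $J_\theta^{\mathcal{K}_i}$ and of its fractional increments (Lemmas \ref{LEMMA3.4} and \ref{D_hJtheKestimate}); this is what yields the suprema in (i)--(iii) under the stated constraints $p>\frac{2}{4H-1}$, respectively $p>\frac1H$ and $\gamma<H-\frac1p$. So either adopt the factorization route, or, if you insist on the direct route, replace Minkowski by H\"older and prove the missing $L^{2p/(p-2)}$ kernel-increment estimates from the explicit form $G(t,x)=\frac12\1_{\{|x|<t\}}$ -- feasible, but nowhere done in your proposal, so as written none of (i)--(iii) is established. (Also, the uniformity in $\e$ is not the ``technical heart'': it is immediate from \eqref{eq H compare} and \eqref{g-interal-bound}.)
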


 \begin{proof}[Proof of Proposition \ref{thm solu skeleton}] {\bf Existence.}
 The uniform H\"older continuity of the type specified in Lemma \ref{TimeSpaceRegBdd} (ii) and (iii) is the key to show the existence of the solution to Eq.\,\eqref{eq skeleton}.
 This, together with the fact of $u^{g}_{\e}(0,0)=u_0(0)$ yields that the family $\{u^{g}_{\e}\}_{\e>0}$ is relatively compact in $({\mathcal C}([0,T]\times\RR), d_{\mathcal C})$ by the Arzel\`a-Ascoli theorem.
Hence, there is a subsequence $\ep_n\downarrow 0$ such that    $u^g_{\ep_n}\rightarrow u^g$ in $(\cC([0, T]\times\RR),d_{\cC})$.
By Lemma  \ref{thm solu skeleton} and Lemma \ref{lemma3.3limits} (i),
 we have  that, for any $p\ge 2$,
\begin{align}\label{ugZlamdanormal}
\sup_{g\in S^N}\|u^g\|_{Z^p(T)}<\infty.
\end{align}
By using the Cauchy-Schwarz inequality, \eqref{Lipschitzianuniformlycondu},  Lemma \ref{lemma3.3limits} (ii) and the dominated convergence theorem, we have that as $\e_n\downarrow 0$,
  \begin{align*}
 u^g_{\e_n}(t,x)=&\,I_0(t,x)+\int_0^t\langle G(t-s,x-\cdot)\left(\sigma(s,\cdot,u^g_{\ep_n}(s,\cdot))-\sigma(s,\cdot, u^g(s,\cdot))\right),g(s,\cdot)\rangle_{\cH_{\ep_n}} ds\notag\\
 &+\int_0^t\langle G(t-s,x-\cdot)\sigma(s,\cdot,u^g(s,\cdot)),g(s,\cdot)\rangle_{\cH_{\ep_n}} ds\notag\\
  &\,\longrightarrow  I_0(t,x)+\int_0^t\langle G(t-s,x-\cdot)(x-\cdot)\sigma(s,\cdot, u^g(s,\cdot)),g(s,\cdot)\rangle_{\cH} ds.
  \end{align*}
 The uniqueness of the limit of  $\{u^g_{\ep_n}\}_{n\ge1}$ implies that $u^g$ satisfies Eq.\,\eqref{eq skeleton}.

{\bf Uniqueness.}
Let $u^g$ and $v^g$ be two solutions of Eq.\,\eqref{eq skeleton} with the same initial value condition.
Similarly to  Lemma \ref{TimeSpaceRegBdd} (i), we can obtain that, for any   $p>\frac{2}{4H-1}$,
\begin{equation}\label{Nug}
      \sup\limits_{t\in[0,T],\, x\in \RR}\cN_{\frac 12-H}u^g(t,x) <\infty,\,\,
      \sup\limits_{t\in[0,T],\, x\in \RR} \cN_{\frac 12-H}v^g(t,x) <\infty.
\end{equation}
  Denote that
  \begin{align*}
  S_1(t)=\,\int_{\RR}|u^g(t,x)-v^g(t,x)|^2dx
  \end{align*}
and
   \begin{align*}
  S_2(t)=\,\int_{\RR^2}\left|u^g(t,x+h)-v^g(t,x+h)-u^g(t,x)+v^g(t,x)\right|^2\cdot|h|^{2H-2}dhdx.
  \end{align*}
According to \eqref{ugZlamdanormal} and the definition of the norm $\|\cdot\|_{Z^p(T)}$ , we know that
  \begin{align}\label{S_1S_2_infty}
  \sup_{t\in[0,T]}S_1(t)<\infty,\qquad\sup_{t\in[0,T]}S_2(t)<\infty.
  \end{align}
 Recall $\mathcal{D}_hf(t,x)$ defined by \eqref{TechDt} and denote that $$\triangle(t,x,y):=\sigma(t,x,u^g(t,y))-\sigma(t,x,v^g(t,y)).$$
  Since $\int_0^T \|g(s,\cdot)\|_{\HH}^2ds\le N$, by using the Cauchy-Schwarz inequality, \eqref{eq H product}  and a change of variable, we have
  \begin{align}\label{4.84}
  &\int_{\RR}|u^g(t,x)-v^g(t,x)|^2dx\notag\\
  =&\,\int_{\RR}\left|\int_0^t\langle G(t-s, x-\cdot)\triangle(s,\cdot,\cdot), g(s,\cdot)\rangle_{\mathcal H}ds\right|^2 dx\notag\\
 \lesssim&\,\int_{\RR}
\int_0^t\|G(t-s,x-\cdot)\triangle(s,\cdot,\cdot)\|^2_{\mathcal H}dsdx\notag\\
 \lesssim&\,\int_0^t\int_{\RR^3}\left|G(t-s,x-y)\right|^2\left|\triangle(s,y,y+h)-\triangle(s,y,y)\right|^2\cdot|h|^{2H-2}dhdydxds\\
  &+\int_0^t\int_{\RR^3}\left|\mathcal{D}_{-h}G(t-s,x-y)\right|^2\left|\triangle(s,y+h,y)\right|^2\cdot|h|^{2H-2}dhdydxds\notag\\
  &+\int_0^t\int_{\RR^3}\left|G(t-s,x-y)\right|^2\left|\triangle(s,y+h,y)-\triangle(s,y,y)\right|^2\cdot|h|^{2H-2}dhdydxds\notag\\
 =:&\,V_1(t)+V_2(t)+V_3(t).\notag
  \end{align}
  According to   \eqref{DuSigam} and (\ref{DuSigamAdd}), we have
  \begin{equation}\label{490-490}
  \begin{split}
 &\left|\triangle(s,y,y+h)-\triangle(s,y,y)\right|^2\\
 =&\,\Bigg|\int_0^1\left[u^g(s,y+h)-v^g(s,y+h)\right]\sigma'_\xi(s,y,\theta u^g(s,y+h)+(1-\theta) v^g(s,y+h))d\theta\\
 &-\int_0^1\left[u^g(s,y)-v^g(s,y)\right]\sigma'_\xi(s,y,\theta u^g(s,y)+(1-\theta) v^g(s,y))d\theta\Bigg|^2\\
 \lesssim&\,\left|u^g(s,y+h)-v^g(s,y+h)-u^g(s,y)+v^g(s,y)\right|^2\\
 &+\left|u^g(s,y)-v^g(s,y)\right|^2\cdot\left[|u^g(s,y+h)-u^g(s,y)|^2+|v^g(s,y+h)-v^g(s,y)|^2\right].
 \end{split}
 \end{equation}
This,  together with \eqref{Nug}, gives the  following estimate:
 \begin{align}\label{v3label}
 V_1(t)\lesssim\, \int^t_0(t-s)\cdot [S_1(s)+S_2(s)]ds.
 \end{align}
 By Lemma \ref{TechLemma3} and \eqref{Lipschitzianuniformlycondu}, we have
 \begin{equation}\label{v1label}
 \begin{split}
 V_2(t)
 \lesssim&\,\int_0^t\int_{\RR}\left(\int_{\RR^2}\left|\mathcal{D}_hG(t-s,x-y)\right|^2|h|^{2H-2}dhdx\right)\cdot|u^g(s,y)-v^g(s,y)|^2dyds\\
  \lesssim&\, \int_0^t(t-s)^{2H}\cdot S_1(s)ds.
   \end{split}
  \end{equation}
If  $h>1$, \eqref{DuSigam} implies that
 \begin{equation}\label{h1dayu}
 \begin{split}
 \left|\triangle(s,y+h,y)-\triangle(s,y,y)\right|^2
 =&\,\left|\int^{v^g}_{u^g}\left(\sigma'_\xi(s,y+h,\xi)-\sigma'_\xi(s,y,\xi)\right)d\xi\right|^2\\
 \lesssim&\, \left|u^g(s,y)-v^g(s,y)\right|^2.
  \end{split}
  \end{equation}
 If  $h\leq1$, then we have that by   \eqref{DuxSigam}
 \begin{equation}\label{h1xiaoyu}
 \begin{split}
 \left|\triangle(s,y+h,y)-\triangle(s,y,y)\right|^2
 =&\,\left|\int^{v^g}_{u^g}\left(\sigma'_\xi(s,y+h,\xi)-\sigma'_\xi(s,y,\xi)\right)d\xi\right|^2\\
 \lesssim&\,\left|u^g(s,y)-v^g(s,y)\right|^2\cdot \left|h\right|^2.
  \end{split}
  \end{equation}
 Thus,  \eqref{h1dayu} and  \eqref{h1xiaoyu} yield that
 \begin{equation}\label{v2label}
 \begin{split}
 V_3(t)
 \lesssim&\,\int_0^t\int_{\RR^2}\left|G(t-s,x-y)\right|^2|u^g(s,y)-v^g(s,y)|^2dydxds\\
 =&\,\int_0^t(t-s)\cdot S_1(s)ds.
 \end{split}
  \end{equation}
Collecting the inequalities in \eqref{4.84}, \eqref{v3label}, \eqref{v1label} and \eqref{v2label} together, we arrive at
 \begin{align}\label{s1label}
 S_1(t)\lesssim\, \int^t_0(t-s)^{2H}\cdot [S_1(s)+S_2(s)]ds.
 \end{align}
Applying the same procedure  as in the proof of \eqref{s1label}, we can show
 \begin{align}\label{s2label}
 S_2(t)\lesssim \,\int^t_0(t-s)^{4H-1}\cdot [S_1(s)+S_2(s)]ds.
 \end{align}
Therefore,  \eqref{s1label} and \eqref{s2label} imply that
\begin{align}\label{S1plusS2}
 S_1(t)+ S_2(t)\lesssim\, \int^t_0\left((t-s)^{2H}+(t-s)^{4H-1}\right)\cdot[S_1(s)+S_2(s)]ds.
 \end{align}
Invoking the fact that  $S_1(t)$ and $S_2(t)$ are uniformly bounded on $[0,T]$ by \eqref{S_1S_2_infty}, by the Gronwall inequality, we have
 $$S_1(t)+S_2(t)=0 \qquad  \text{ for   all } \quad t\in[0,T],$$
which together with the continuities of $u^g$ and $v^g$ gives that $u^g=v^g$.

 The proof is complete.
 \end{proof}

\section{Large deviation principle}
Firstly, we show that Condition \ref{cond1} (a) is  satisfied.  Recall that $\Gamma^{0}\left(\int_0^\cdot g(s)ds\right)=u^g$ for $g\in \mathbb{S}$, where $u^g$ is the solution of Eq.\,\eqref{eq skeleton}.
\begin{proposition}\label{thm continuity skeleton}
Assume that Hypothesis \ref{Hypoth.1} holds.
For every $N<+\infty$, let $\{g_n\}_{n\geq1}, g $ be in $S^N$ such that  $g_n\to g$ weakly as $n\to \infty$.
Then, as $n\rightarrow\infty$,
$$ u^{g_n}\longrightarrow  u^{g} \ \ \text{
in } \mathcal{C}([0, T]\times\RR),
$$
where $u^{g_n}$, $u^g$ are the solutions of Eq.\,\eqref{eq skeleton} associated with $g_n$ and $g$, respectively.
\end{proposition}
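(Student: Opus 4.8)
The plan is to combine a compactness argument with the uniqueness assertion of Proposition~\ref{thm solu skeleton}. First I would show that the family $\{u^{g_n}\}_{n\ge1}$ is relatively compact in $(\cC([0,T]\times\RR),d_{\cC})$, and then prove that any limit point of the sequence necessarily coincides with $u^g$; since the limit is thereby uniquely pinned down, the whole sequence must converge to $u^g$ in $\cC([0,T]\times\RR)$.

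For the relative compactness I would invoke the uniform bound $\sup_{g\in S^N}\|u^g\|_{Z^p(T)}<\infty$ from Proposition~\ref{thm solu skeleton}, which gives $\sup_{n\ge1}\|u^{g_n}\|_{Z^p(T)}<\infty$. The time- and space-H\"older estimates of Lemma~\ref{TimeSpaceRegBdd}(ii)--(iii), established there for the approximate solutions $u_\e^g$, pass to the skeleton solutions through the limiting procedure used in the existence proof of Proposition~\ref{thm solu skeleton} (their constants depend on $g$ only through $\|u^g\|_{Z^p(T)}$). Hence, for suitable $p>\frac1H$ and $0<\gamma<H-\frac1p$, the time- and space-H\"older seminorms of $u^{g_n}$ of order $\gamma$ are bounded uniformly in $n$; combined with the common anchoring value $u^{g_n}(0,0)=u_0(0)$, the Arzel\`a--Ascoli theorem yields relative compactness of $\{u^{g_n}\}$ in $(\cC([0,T]\times\RR),d_{\cC})$.

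It then suffices to take an arbitrary subsequence, extract a further subsequence $u^{g_{n_k}}\to v$ in $\cC([0,T]\times\RR)$, and identify $v$ as a solution of Eq.~\eqref{eq skeleton} driven by $g$. Writing the convolution term of $u^{g_{n_k}}$ as
\begin{align*}
&\int_0^t\big\langle G(t-s,x-\cdot)\big[\sigma(s,\cdot,u^{g_{n_k}}(s,\cdot))-\sigma(s,\cdot,v(s,\cdot))\big],\,g_{n_k}(s,\cdot)\big\rangle_{\mathcal H}\,ds\\
&\qquad+\int_0^t\big\langle G(t-s,x-\cdot)\sigma(s,\cdot,v(s,\cdot)),\,g_{n_k}(s,\cdot)\big\rangle_{\mathcal H}\,ds,
\end{align*}
I would handle the two pieces separately. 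For the first, the Cauchy--Schwarz inequality in $\mathcal H$ and in $s$, together with $\int_0^T\|g_{n_k}(s)\|_{\mathcal H}^2ds\le N$, bound it by $\sqrt N$ times the $L^2([0,t];\mathcal H)$-norm of $G(t-s,x-\cdot)[\sigma(s,\cdot,u^{g_{n_k}})-\sigma(s,\cdot,v)]$; estimating this $\mathcal H$-norm by the increment decomposition used in the proof of Lemma~\ref{UniBExist}, using the Lipschitz bound \eqref{Lipschitzianuniformlycondu}, the uniform H\"older bounds, and the boundedness of the light cone $\{|x-y|<t-s\}$ (on which $u^{g_{n_k}}\to v$ uniformly), this piece tends to $0$. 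For the second, the map $s\mapsto\1_{[0,t]}(s)\,G(t-s,x-\cdot)\sigma(s,\cdot,v(s,\cdot))$ is a fixed element of $L^2([0,T];\mathcal H)$ --- its finiteness being exactly the $Z^p(T)$-type bound on $v$ --- so the weak convergence $g_{n_k}\to g$ in $L^2([0,T];\mathcal H)$ forces this piece to converge to $\int_0^t\langle G(t-s,x-\cdot)\sigma(s,\cdot,v(s,\cdot)),g(s,\cdot)\rangle_{\mathcal H}\,ds$. Passing to the limit in the identity for $u^{g_{n_k}}$ then shows that $v$ solves Eq.~\eqref{eq skeleton} with $g$, whence $v=u^g$ by the uniqueness part of Proposition~\ref{thm solu skeleton}. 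Since the limit point is independent of the extracted subsequence, $u^{g_n}\to u^g$.

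The step I expect to be the main obstacle is precisely this passage to the limit inside the $\mathcal H$-inner product: weak convergence of $g_n$ can only be exploited against a \emph{fixed} integrand, which forces one to peel off the nonlinearity and control $\sigma(s,\cdot,u^{g_{n_k}})-\sigma(s,\cdot,v)$ in the $\mathcal H$-norm rather than in a plain $L^2$ norm. Carrying out this $\mathcal H$-norm estimate requires the full increment/box decomposition and the uniform H\"older and $Z^p(T)$ bounds, and one must separately verify that the limiting integrand genuinely belongs to $L^2([0,T];\mathcal H)$ so that the weak-convergence argument is legitimate; these are the technical points on which the whole proof hinges.
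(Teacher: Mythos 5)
Your proposal is correct, and its first half (the uniform $Z^p(T)$ bound from Proposition \ref{thm solu skeleton}, the uniform H\"older estimates inherited from Lemma \ref{TimeSpaceRegBdd}, the anchoring $u^{g_n}(0,0)=u_0(0)$, and Arzel\`a--Ascoli) is exactly the paper's compactness step; but your identification of the limit takes a genuinely different route. The paper never passes to the limit in the equation and never invokes the uniqueness theorem: it compares $u^{g_n}$ directly with $u^g$ by estimating $D_1(t)=\int_\RR|u^{g_n}(t,x)-u^g(t,x)|^2dx$ together with the matching fractional-increment quantity $D_2(t)$, splitting the difference of the two equations into terms where $\sigma(\cdot,\cdot,u^{g_n})-\sigma(\cdot,\cdot,u^g)$ is paired with $g_n$ (these are bounded by $\int_0^t(t-s)^{4H-1}[D_1(s)+D_2(s)]ds$ exactly as in the uniqueness proof) and terms where the \emph{fixed} integrand $G(t-s,x-\cdot)\sigma(s,\cdot,u^g(s,\cdot))$ is paired with $g_n-g$ (these vanish for a.e.\ $x$ by weak convergence via Lemma \ref{lemma3.3limits} (ii), and then after integration in $x$ by an $L^{p/2}$ uniform-integrability argument); Gronwall then gives $D_1(t)+D_2(t)\to0$, so $u^{g_n}(t,\cdot)\to u^g(t,\cdot)$ in $L^2(\RR)$, and uniqueness of limits in $(\cC([0,T]\times\RR),d_{\cC})$ pins the compactness limit to $u^g$. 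Your route---identify any subsequential limit $v$ as a solution by passing to the limit in the equation, then cite the uniqueness assertion of Proposition \ref{thm solu skeleton}---is more modular and mirrors the existence proof, but it is also where the extra work sits, and this is the one point you should tighten: for the term $\int_0^t\langle G(t-s,x-\cdot)[\sigma(s,\cdot,u^{g_{n_k}}(s,\cdot))-\sigma(s,\cdot,v(s,\cdot))],g_{n_k}(s,\cdot)\rangle_{\mathcal H}ds$, uniform convergence on the light cone alone does not suffice, because the $\mathcal H$-norm couples values at $y$ and $y+h$ with $h$ ranging over all of $\RR$, so the relevant spatial support is not compact; one must split $|h|\le M$ and $|h|>M$, kill the tail uniformly in $k$ using the integrability of $|h|^{2H-2}$ at infinity together with the uniform $L^p(\RR)$ bounds, and handle the increment terms by the uniform H\"older bounds and dominated convergence, with the mixed regularity of $\sigma$ treated as in \eqref{490-490}, \eqref{h1dayu}, \eqref{h1xiaoyu}. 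You should also note that the weak-convergence step against the fixed integrand is justified only for a.e.\ $x$ (Lemma \ref{lemma3.3limits} (ii)), which is enough because the uniqueness argument is an $L^2(\RR)$-in-$x$ argument, but it needs saying. What the paper's Gronwall absorption buys is precisely the avoidance of all this: the nonlinear term never has to vanish, only to be dominated by the Gronwall kernel, and the full sequence (not just a subsequence) is handled at once; what your approach buys is reuse of the uniqueness theorem and a limit-identification scheme identical in spirit to the existence proof of Proposition \ref{thm solu skeleton}.
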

\begin{proof}
Recall Eq.\,\eqref{eq skeleton} with $g$ replaced by $g_n$,
 \begin{align}\label{ugn}
u^{g_n}(t,x)=I_0(t,x)+\int_0^t\langle G(t-s,x-\cdot)\sigma(s, \cdot,u^{g_n}(s, \cdot)), {g_n}(s,\cdot)\rangle_{\mathcal H}ds,\,\,t\geq 0,\,\,x\in\RR.
 \end{align}
 Since $\{u^{g_n}\}_{n\geq1}\subset S^N$, by Proposition \ref{thm solu skeleton}, we have that, for any $p> 2$,
 \begin{align}\label{ugn-z-normal}
 \sup_{n\geq1}\left\|u^{g_n}\right\|_{ Z^p(T)}<\infty.
\end{align}

Similarly to the H\"older continuity of $u^{\e}$ specified by \eqref{Hol1} and \eqref{Lemma-4.3-iii},
we can obtain the H\"older continuity of $u^{g_n}(t,x)$  (uniformly in $t\in[0,T]$ and $x\in\RR$).
Combining this with $u^{g_n}(0,0)=u_0(0)$, we know that $\{u^{g_n}\}_{n\geq1}$ is relatively compact on the space $({\mathcal C}([0,T]\times\RR), d_{\mathcal C})$ by the Arzel\`a-Ascoli theorem. Thus,
 there exists a subsequence of $\{u^{g_n}\}_{n\geq1}$ (still denoted by $\{u^{g_n}\}_{n\geq1}$) and $u\in\mathcal{C}([0,T]\times\RR)$ such that $u^{g_n}\rightarrow u$ as $n\rightarrow \infty$.
By \eqref{ugn-z-normal} and Lemma \ref{lemma3.3limits} (i), we have that,  for any $p>2$,
\begin{align}\label{and}
\|u\|_{ Z^p(T)}<\infty.
\end{align}

We now prove that $u=u^g$.
  Denote that
  \begin{align*}
  D_1(t)=\,\int_{\mathbb{R}}|u^{g_n}(t,x)-u^g(t,x)|^2dx
  \end{align*}
and
   \begin{align*}
  D_2(t)=\,\int_{\RR^2}\left|u^{g_n}(t,x+h)-u^g(t,x+h)-u^{g_n}(t,x)+u^g(t,x)\right|^2\cdot|h|^{2H-2}dhdx.
  \end{align*}
  According to \eqref{ugZlamdanormal}, we know that $\displaystyle\sup_{t\in[0,T]}D_1(t)<\infty$ and $\displaystyle\sup_{t\in[0,T]}D_2(t)<\infty$.
 Denote  $\triangle_n(t,x,y):=\sigma(t,x,u^{g_n}(t,y))-\sigma(t,x,u^g(t,y))$. Recall $\mathcal{D}_hf(t,x)$ defined by \eqref{TechDt}.
It follows from \eqref{eq skeleton} and \eqref{ugn} that
  \begin{equation}\label{D_1+D_2}
      \begin{split}
        &D_1(t)+D_2(t)\\
        \leq&\,2\int_{\RR}\left|\int^t_0\langle G(t-s,x-\cdot)\triangle_n(s, \cdot,\cdot), {g_n}(s,\cdot)\rangle_{\mathcal H}ds\right|^2dx\\
        &+\,2\int_{\RR}\left|\int^t_0\langle G(t-s,x-\cdot)\sigma(s,\cdot,u^g(s,\cdot)), {g_n}(s,\cdot)-g(s,\cdot)\rangle_{\mathcal H}ds\right|^2dx\\
        &+\,2\int_{\RR}\left|\int^t_0\langle \mathcal{D}_hG(t-s,x-\cdot)\triangle_n(s, \cdot,\cdot), {g_n}(s,\cdot)\rangle_{\mathcal H}ds\right|^2\cdot|h|^{2H-2}dhdx\\
        &+\,2\int_{\RR}\left|\int^t_0\langle\mathcal{ D}_hG(t-s,x-\cdot)\sigma(s,\cdot,u^g(s,\cdot)), {g_n}(s,\cdot)-g(s,\cdot)\rangle_{\mathcal H}ds\right|^2\cdot|h|^{2H-2}dhdx\\
        =:&\,2\left (E_1(t)+E_2(t)+E_3(t)+E_4(t)\right).
      \end{split}
  \end{equation}
  Using  the similar procedure as in the proof of \eqref{S1plusS2}, we can show that
   \begin{equation}\label{E_1+E_3}
      \begin{split}
        E_1(t)+E_3(t)
        \lesssim\,
   \int^t_0(t-s)^{4H-1}\cdot\left[D_1(s)+D_2(s)\right]ds.
      \end{split}
  \end{equation}
 On the other hand, denote that
 \begin{align*}
   F_n(t,x):=\left|\int^t_0\langle G(t-s,x-\cdot)\sigma(s,\cdot,u^g(s,\cdot)), {g_n}(s,\cdot)-g(s,\cdot)\rangle_{\mathcal H}ds\right|^2.
 \end{align*}
  By Lemma \ref{lemma3.3limits} (ii),
 we know that, for almost all $x\in \mathbb R$,
 \begin{align*}
    \int_{0}^{t}\left\| G(t-s,x-\cdot)\sigma(s,\cdot,u^{g}(s,\cdot))\right\|_{\HH}^2ds<\infty.
 \end{align*}
As $g_n$ converges  weakly   to $g$ when $n$ tends to $\infty$, we know that,  for  almost all $x\in \mathbb R$,
  \begin{align}\label{eq H G2}
      F_n(t,x) \rightarrow 0, \ \ \text{as }  n\rightarrow \infty.
 \end{align}
 Since $g_n, g\in S^N$,  by using the Cauchy-Schwarz inequality, \eqref{eq H product} and a change of variable, we have that, for any $p> 2$,
 \begin{align*}
       &\int_{\RR}  F_n(t,x) ^{\frac{p}{2}}dx\notag\\
      \lesssim&\,\int_{\RR}\lc\int_{0}^{t}\left\| G(t-s,x-\cdot)\sigma(s,\cdot,u^{g}(s,\cdot))\right\|_{\HH}^2ds\rc^{\frac p2}dx \notag\\
     \simeq&\,
       \int_{\RR}\bigg(\int_{0}^{t}\int_{\RR^2}  \Big|G(t-s,x-y-h)\sigma(s,y+h,u^{g}(s,y+h))\\
       &\qquad\qquad\quad-G(t-s,x-y)\sigma(s,y,u^{g}(s,y))\Big|^2 \cdot|h|^{2H-2}dhdyds\bigg)^{\frac p2}dx.\notag
     \end{align*}
  By the similar technique as that  in Step 2 in the proof of Lemma
   \ref{UniBExist}, we have
  \begin{align*}
       &\sup_{n\geq1}\int_{\RR}  F_n(t,x) ^{\frac{p}{2}}dx
      \lesssim \|u^g\|^p_{Z^p(T)}<\infty.
   \end{align*}
 It follows  that $\{ F_n(t,x)\}_{n\geq 1}$ is $L^1$-uniformly integrable in $(\mathbb R, dx)$.
By \eqref{eq H G2} and the uniform integrability convergence theorem,  we have
 \begin{align}\label{E2lim0}
   \lim_{n\rightarrow\infty} E_2(t)=0.
 \end{align}
Similarly, we can prove
  \begin{align}\label{E4lim0}
  \lim_{n\rightarrow\infty} E_4(t)=0.
 \end{align}
Putting \eqref{D_1+D_2}, \eqref{E_1+E_3}, \eqref{E2lim0} and \eqref{E4lim0} together, by the Gronwall lemma, we have $$\displaystyle\lim_{n\rightarrow\infty}\left[D_1(t)+D_2(t)\right]=0,  \, \text{for all }\, t\in[0,T].$$
 In particular, $u^{g_n}(t,\cdot)\rightarrow u^g(t,\cdot)$ as $n\rightarrow \infty$ in the space $ L^2(\RR)$ for all $t\in [0,T]$.
   Since
   $u^{g_n}$ also converges to $u$ as $n\rightarrow \infty$ in the space $\left(\mathcal{C}([0,T]\times\RR),d_{\mathcal{C}}\right)$, the uniqueness of the limit of $u^{g_n}$ implies that $u=u^g$.
 The proof is complete.
\end{proof}

We now verify the second part of Condition \ref{cond1}.
For any $\e\in(0,1)$, recall the  solution functional  $\Gamma^{\e}: C([0,T];\RR^{\infty}) \rightarrow \mathcal C([0,T]\times\RR) $ defined by
 \begin{align}\label{eq Gamma e}
 \Gamma^{\e}\left(W(\cdot)\right):=u^\e,
 \end{align}
  where $u^\e$ stands for the solution of Eq.\,\eqref{SWE}.

Let $\{g^{\e}\}_{\e\in(0,1)}\subset \mathcal U^N$ be a given family of stochastic processes.
By the Girsanov theorem, we know  that $\tilde{u}^{\e}:=\Gamma^{\e}\left(W({\cdot})+\frac{1}{\sqrt \e} \int_0^{\cdot}{g}^{\e}(s)ds \right)$ is the unique solution of
 \begin{equation}\label{eq SPDE Y-1}
\begin{split}
   \tilde{u}^{\e}(t,x)
  =&\,I_0(t,x)+\sqrt{\e}\int^t_0\int_{\RR}G(t-s,x-y)\sigma(s,y,\tilde{u}^{\e}(s,y)){W}(ds,dy) \\
  &+\int^t_0\left\langle G(t-s,x-\cdot)\sigma(s,\cdot,\tilde{u}^{\e}(s,\cdot)), g^{\e}(s,\cdot)\right\rangle_{\HH}ds.
\end{split}
\end{equation}
Moreover, the following stochastic  equation is held for $\bar{u}^{\e}:=\Gamma^0\left(\int_0^{\cdot} g^{\e}(s)ds\right)$, where $ \Gamma^0$ is defined by \eqref{eq Gamma0}:
\begin{align}\label{eq SPDE Z-1}
   \bar{u}^{\e}(t,x)
  =&\,I_0(t,x)+\int^t_0\left\langle G(t-s,x-\cdot)\sigma(s,\cdot,\bar{u}^{\e}(s,\cdot)), g^{\e}(s,\cdot)\right\rangle_{\HH}ds.
\end{align}
 \begin{proposition}
 \label{proposition-4-3}Assume that Hypothesis \ref{Hypoth.1} holds.  Then, for any family $\{g^{\e}\}_{\e\in(0,1)}\subset \mathcal U^N$ ($N<+\infty$) and for any $\delta>0$,
    $$\lim_{\e\rightarrow 0}\mathbb P\left(d_{\mathcal C}(\tilde{u}^{\e}, \bar{u}^{\e})>\delta\right)=0. $$
    \end{proposition}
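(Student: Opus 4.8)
The plan is to estimate the difference $w^\e:=\tilde u^\e-\bar u^\e$ in the random-field norm $\|\cdot\|_{\mathcal Z^p(T)}$ and then to upgrade that bound to uniform convergence on compact rectangles, from which $d_{\mathcal C}(\tilde u^\e,\bar u^\e)\to0$ in probability will follow by Markov's inequality and the definition \eqref{e.4.metric} of $d_{\mathcal C}$. Subtracting \eqref{eq SPDE Z-1} from \eqref{eq SPDE Y-1}, I write $w^\e(t,x)=\Theta^\e(t,x)+\Xi^\e(t,x)$, where $\Theta^\e(t,x):=\sqrt\e\int_0^t\int_\RR G(t-s,x-y)\sigma(s,y,\tilde u^\e(s,y))W(ds,dy)$ is the stochastic part and $\Xi^\e(t,x):=\int_0^t\langle G(t-s,x-\cdot)[\sigma(s,\cdot,\tilde u^\e(s,\cdot))-\sigma(s,\cdot,\bar u^\e(s,\cdot))],g^\e(s,\cdot)\rangle_{\HH}ds$ is the drift part. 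The point is that the explicit factor $\sqrt\e$ forces $\Theta^\e$ to be of size $\sqrt\e$ uniformly, while $\Xi^\e$ is absorbed into a singular Gronwall inequality through the Lipschitz continuity of $\sigma$.

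First I would record the uniform moment bounds that make the scheme work. Since $g^\e\in S^N$ almost surely, the pathwise bound $\int_0^T\|g^\e(s)\|_{\HH}^2\,ds\le N$ holds, so Lemma \ref{UniBExist} together with Proposition \ref{thm solu skeleton} gives $\sup_\e\|\bar u^\e\|_{\mathcal Z^p(T)}<\infty$ by inserting the realization of $g^\e$ trajectory by trajectory. For $\tilde u^\e$ one repeats the four-step argument of the proof of Lemma \ref{UniBExist}, now invoking the Burkholder--Davis--Gundy inequality of Proposition \ref{HBDG} for the extra It\^o term: the decompositions $A_1,A_2,A_3$ (for the $L^p$ part) and $I_1,I_2,I_3$ (for the spatial increment), controlled via Lemma \ref{TechLemma3} and the extended Gronwall lemma \cite[Lemma 15]{Dalang1999}, yield $\sup_{\e\in(0,1)}\|\tilde u^\e\|_{\mathcal Z^p(T)}<\infty$ for every $p\ge2$.

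The heart of the argument is the difference estimate. Fix $p>\max\{2/(4H-1),\,1/H\}$ and set $\Phi^\e(t):=\|w^\e(t,\cdot)\|^2_{L^p(\Omega\times\RR)}+\big[\cN^*_{\frac12-H,\,p}w^\e(t)\big]^2$. For $\Xi^\e$ I would follow the uniqueness computation in the proof of Proposition \ref{thm solu skeleton} verbatim: the Cauchy--Schwarz inequality in $\HH$, the bound $\int_0^T\|g^\e(s)\|^2_{\HH}ds\le N$, the hypotheses \eqref{DuSigam}, \eqref{DuxSigam}, \eqref{DuSigamAdd} on $\sigma$, and the uniform regularity \eqref{359359} of $\tilde u^\e$ and $\bar u^\e$ together produce a contribution bounded by $C\int_0^t\big((t-s)^{2H}+(t-s)^{4H-1}\big)\Phi^\e(s)\,ds$. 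For $\Theta^\e$, Proposition \ref{HBDG} combined with the same $A_i$ and $I_i$ decompositions and the uniform moment bound of the previous step gives $\sup_{t\le T}\|\Theta^\e(t,\cdot)\|^2_{L^p(\Omega\times\RR)}+\sup_{t\le T}[\cN^*_{\frac12-H,\,p}\Theta^\e(t)]^2\le C\e\,\sup_\e\|\tilde u^\e\|^2_{\mathcal Z^p(T)}\le C\e$, since the prefactor $\sqrt\e$ is squared. Altogether $\Phi^\e(t)\le C\e+C\int_0^t\big((t-s)^{2H}+(t-s)^{4H-1}\big)\Phi^\e(s)\,ds$, and the extended Gronwall lemma \cite[Lemma 15]{Dalang1999} yields $\sup_{t\le T}\Phi^\e(t)\le C\e$, hence $\|w^\e\|_{\mathcal Z^p(T)}\le C\sqrt\e\to0$.

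Finally I would upgrade this to the metric $d_{\mathcal C}$. Applying the time and space Hölder estimates of Lemma \ref{TimeSpaceRegBdd}(ii)--(iii) to the two ingredients $\Theta^\e$ and $\Xi^\e$ of $w^\e$ shows that, for $p>1/H$ and $0<\gamma<H-\frac1p$, the increment moments of $w^\e$ are bounded by $C\big(\sqrt\e+\|w^\e\|_{\mathcal Z^p(T)}\big)|z-z'|^{\gamma}\le C\sqrt\e\,|z-z'|^\gamma$ uniformly in $\e$. Choosing $p$ large enough that $\gamma p>2$ and invoking the Garsia--Rodemich--Rumsey (Kolmogorov) inequality on each rectangle $[0,T]\times[-n,n]$ then gives $\mathbb E\big[\sup_{t\le T,\,|x|\le n}|w^\e(t,x)|^p\big]\le C_n\,\e^{p/2}\to0$. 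Markov's inequality yields $\sup_{t\le T,|x|\le n}|w^\e|\to0$ in probability for each $n$, and dominated convergence in the series \eqref{e.4.metric} concludes that $d_{\mathcal C}(\tilde u^\e,\bar u^\e)\to0$ in probability. The most delicate point is the difference estimate of the third paragraph, where the spatial roughness $H<\frac12$ forces the singular kernels $(t-s)^{2H}$ and $(t-s)^{4H-1}$ and requires carrying the double-increment ($\Box_{h,z}$) terms for the stochastic part through Proposition \ref{HBDG} and Lemma \ref{TechLemma3} while preserving the $\sqrt\e$ gain.
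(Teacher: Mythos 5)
There is a genuine gap at the heart of your argument (your third paragraph). You claim the uniqueness computation of Proposition \ref{thm solu skeleton} carries over ``verbatim'' to bound the drift part $\Xi^\e$, citing the uniform regularity \eqref{359359}. But \eqref{359359} is a statement about \emph{deterministic} functions: in the skeleton setting, $\sup_{t,x}\cN_{\frac 12-H}u^g(t,x)$ is a finite number, so in \eqref{490-490} the cross terms
\[
|u^g(s,y)-v^g(s,y)|^2\cdot\Big[|u^g(s,y+h)-u^g(s,y)|^2+|v^g(s,y+h)-v^g(s,y)|^2\Big]
\]
can be integrated against $|h|^{2H-2}dh$ and the resulting factor pulled out as a constant, leaving $S_1(s)$ inside the Gronwall inequality. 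For the stochastic pair $(\tilde u^\e,\bar u^\e)$ the corresponding quantity $\sup_{t,x}\cN_{\frac 12-H}\tilde u^\e(t,x)$ is only a random variable with finite moments (Lemma \ref{lemma-5.2}(i)); it is not almost surely bounded by a deterministic constant, uniformly in $\e$ and $\omega$. Consequently, in the $L^p(\Omega\times\RR)$-norm estimate the cross term produces
\[
\mathbb{E}\Big[\,|\tilde u^\e(s,y)-\bar u^\e(s,y)|^p\cdot\big(\cN_{\frac 12-H}\tilde u^\e(s,y)\big)^p\Big],
\]
which cannot be bounded by $C\,\|\tilde u^\e(s,\cdot)-\bar u^\e(s,\cdot)\|^p_{L^p(\Omega\times\RR)}$; and if you separate the two factors by H\"older's inequality, the exponent on the difference changes, so the Gronwall iteration no longer closes. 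Your claimed bound $\|w^\e\|_{\mathcal{Z}^p(T)}\le C\sqrt\e$ therefore does not follow, and the final Kolmogorov-continuity step, which consumes this bound, collapses with it.

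This is exactly the difficulty the paper resolves by localization: it introduces the stopping time $\tau_k$ in \eqref{Stoptime1}, so that up to time $\tau_k$ both $\cN_{\frac 12-H}\tilde u^\e$ and $\cN_{\frac 12-H}\bar u^\e$ are bounded pathwise by $k$; the cross terms are then bounded by $k^2\,\|\tilde u^\e_k(s,\cdot)-\bar u^\e_k(s,\cdot)\|^2_{L^p(\Omega\times\RR)}$ (this is the explicit factor $k^2$ in \eqref{5147} and \eqref{J21-bound}), Gronwall applies for each fixed $k$, and Lemma \ref{utildebar} yields $\|\tilde u^\e_k-\bar u^\e_k\|_{\mathcal{Z}^p(T)}\to 0$ with $k$-dependent constants. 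The localization is then removed by the two-step limit in \eqref{eq P} ($\e\to0$ first, then $k\to\infty$), using $\tau_k\uparrow\infty$ a.s.\ \eqref{stop-infinit}; and the passage from $L^p$-convergence of the stopped processes to $d_{\mathcal C}$-convergence is made via tightness of $\{\tilde u^\e-\bar u^\e\}_{\e>0}$ and identification of the weak limit as $0$, rather than via a quantitative H\"older bound on the difference. To repair your proof you would need to insert this (or an equivalent) localization before running your Gronwall argument, and accept that the resulting estimate is only asymptotic in $\e$ for each fixed truncation level $k$, not the uniform rate $C\sqrt\e$ you assert.
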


Before proving Proposition \ref{proposition-4-3}, we give the following lemmas, which assert the finiteness of the $\|\cdot\|_{\mathcal{Z}^p(T)}$ norm and the H\"{o}lder continuity of $\tilde{u}^{\e}$ and $\bar{u}^{\e}$.
 \begin{lemma}\label{lemma-5-1}Assume that Hypothesis \ref{Hypoth.1} holds. Then, it holds that for any  $p>\frac{2}{4H-1}$,
   \begin{align} \label{4-95-bound}
   \sup_{\e\in(0,1)}\|\tilde{u}^{\e}\|_{\mathcal{Z}^p(T)}<\infty,\,\,\,\,\sup_{\e\in(0,1)}\|\bar{u}^{\e}\|_{\mathcal{Z}^p(T)}<\infty.
   \end{align}
   \end{lemma}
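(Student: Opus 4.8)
The plan is to treat the two families in \eqref{4-95-bound} separately: the deterministic family $\{\bar u^\e\}_{\e\in(0,1)}$ falls out almost immediately from the skeleton estimates already established, while the stochastic family $\{\tilde u^\e\}_{\e\in(0,1)}$ requires a fresh a priori estimate that merges the Burkholder--Davis--Gundy inequality of Proposition \ref{HBDG} with the drift bounds derived in Lemma \ref{UniBExist}. In both cases the key is to reduce everything to a single Gronwall-type inequality with the integrable singular kernel $(t-s)^{4H-1}$, noting that $4H-1\in(0,1)$ since $H\in(\frac14,\frac12)$.

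For $\bar u^\e$ I would argue pathwise. For almost every $\omega$, the process $g^\e(\omega)$ belongs to $S^N$, and by \eqref{eq SPDE Z-1} the map $(t,x)\mapsto\bar u^\e(t,x)(\omega)$ is precisely the skeleton solution $u^{g^\e(\omega)}$ of Eq.\,\eqref{eq skeleton}. Proposition \ref{thm solu skeleton} supplies the deterministic bound $\sup_{g\in S^N}\|u^g\|_{Z^p(T)}\le C$, so that pathwise $\int_\RR|\bar u^\e(t,x)|^p\,dx\le C^p$ and the corresponding $\cN^*_{\frac12-H,\,p}$ seminorm is bounded by $C$, uniformly in $t\in[0,T]$, $\e$ and $\omega$. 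Taking expectations then yields $\sup_{\e\in(0,1)}\|\bar u^\e\|_{\mathcal Z^p(T)}\le C$, which is the second bound in \eqref{4-95-bound}.

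For $\tilde u^\e$, set $\Phi^\e(t):=\|\tilde u^\e(t,\cdot)\|_{L^p(\Omega\times\RR)}^2+\big[\cN^*_{\frac12-H,\,p}\tilde u^\e(t)\big]^2$. I would run the a priori estimate on the Picard scheme associated with \eqref{eq SPDE Y-1}, exactly as in the four steps of Lemma \ref{UniBExist}, so that each iterate has finite $\mathcal Z^p(T)$ norm and Fatou's lemma transfers the final bound to $\tilde u^\e$. Writing $\tilde u^\e$ as $I_0$ plus a stochastic term and a drift term, the drift term is controlled verbatim as in Steps 2 and 3 of Lemma \ref{UniBExist}, using $\int_0^T\|g^\e(s)\|_{\HH}^2\,ds\le N$, and contributes a bound of the form $\int_0^t\big((t-s)^{2H}+(t-s)^{4H-1}\big)\Phi^\e(s)\,ds$. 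For the stochastic term I apply Proposition \ref{HBDG}: the $L^p(\Omega)$ norm of $\sqrt\e\int_0^t\int_\RR G(t-s,x-y)\sigma(s,y,\tilde u^\e(s,y))\,W(ds,dy)$ is bounded, using $\sqrt\e<1$, by $\sqrt{4p}\,c_H$ times the square root of $\int_0^t\int_\RR\big(\cN_{\frac12-H,\,p}[G(t-s,x-\cdot)\sigma(s,\cdot,\tilde u^\e(s,\cdot))](y)\big)^2\,dy\,ds$. Decomposing this seminorm via the increment identities \eqref{DuSigamy} and \eqref{DuxSigamy} together with the wave-kernel estimates of Lemma \ref{TechLemma3} produces the same three pieces $A_1,A_2,A_3$ (and $I_1,I_2,I_3$ for the increment in $x$) as in Lemma \ref{UniBExist}, each again dominated by $\int_0^t\big((t-s)^{2H}+(t-s)^{4H-1}\big)\Phi^\e(s)\,ds$ after integration in $x$ and Minkowski's inequality. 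Crucially, the prefactor $\sqrt\e<1$ renders all constants independent of $\e$.

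Adding the $L^p(\Omega\times\RR)$ and $\cN^*$ estimates, and using that for $t-s\in[0,T]$ one has $(t-s)^{2H}\ls(t-s)^{4H-1}$ and $(t-s)\ls(t-s)^{4H-1}$ (valid since $4H-1<2H<1$ on a bounded interval), I would arrive at
\[
\Phi^\e(t)\le C_{T,p,H,N}\Big(\|I_0\|_{Z^p(T)}^2+\int_0^t(t-s)^{4H-1}\Phi^\e(s)\,ds\Big),
\]
with $C_{T,p,H,N}$ independent of $\e\in(0,1)$. The generalized Gronwall lemma \cite[Lemma 15]{Dalang1999} then gives $\sup_{\e\in(0,1)}\sup_{t\in[0,T]}\Phi^\e(t)<\infty$, hence $\sup_{\e\in(0,1)}\|\tilde u^\e\|_{\mathcal Z^p(T)}<\infty$. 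I expect the main obstacle to be the stochastic term: controlling its $\cN^*_{\frac12-H,\,p}$ seminorm through Proposition \ref{HBDG} and the kernel-increment bounds is exactly where the spatial roughness $H<\frac12$ is felt, and one must verify that the BDG constant and the resulting temporal kernels are genuinely uniform in $\e$ before Gronwall can close the estimate; securing a priori finiteness of $\Phi^\e$ (via the Picard iteration and Fatou rather than directly on $\tilde u^\e$) is the accompanying technical point.
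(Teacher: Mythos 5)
Your proposal has a genuine gap in each of the two halves.

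For $\bar{u}^{\e}$, the pathwise reduction to Proposition \ref{thm solu skeleton} followed by ``taking expectations'' does not deliver the $\mathcal{Z}^p(T)$ bound. Proposition \ref{thm solu skeleton} controls, for a.e.\ $\omega$, the quantity $\int_{\RR}\|\mathcal{D}_h\bar{u}^{\e}(t,\cdot)(\omega)\|^2_{L^p(\RR)}\,|h|^{2H-2}dh$, whereas the seminorm entering $\|\cdot\|_{\mathcal{Z}^p(T)}$ is $\int_{\RR}\|\mathcal{D}_h\bar{u}^{\e}(t,\cdot)\|^2_{L^p(\Omega\times\RR)}\,|h|^{2H-2}dh$, with the $L^p(\Omega)$-norm sitting \emph{inside} the $h$-integral. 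Writing $X(\omega,h)=\|\mathcal{D}_h\bar{u}^{\e}(t,\cdot)(\omega)\|_{L^p(\RR)}$, Minkowski's integral inequality gives $\big\|\int_{\RR} X(\cdot,h)^2|h|^{2H-2}dh\big\|_{L^{p/2}(\Omega)}\le\int_{\RR}\|X(\cdot,h)\|^2_{L^p(\Omega)}|h|^{2H-2}dh$, i.e.\ your pathwise bound controls the \emph{smaller} side; passing to the larger side is a reverse-Minkowski step, which fails in general for $p>2$ (and here $p>\frac{2}{4H-1}>2$), since different $\omega$'s may concentrate the $h$-mass of $X(\omega,\cdot)$ on disjoint sets. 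Only for $p=2$ would Fubini rescue the argument. This is precisely why the paper's ``similar but simpler'' proof for $\bar{u}^{\e}$ re-runs the Gronwall estimate directly at the level of $L^p(\Omega\times\RR)$-norms, rather than arguing pathwise.

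For $\tilde{u}^{\e}$, the plan to run Picard ``exactly as in the four steps of Lemma \ref{UniBExist}'' directly on Eq.\,\eqref{eq SPDE Y-1} breaks exactly where your Fatou argument needs it: the convergence of the iterates to $\tilde{u}^{\e}$. Step 1 of Lemma \ref{UniBExist} is a contraction in $L^p$ alone, and it works only because the smoothed inner product $\HH_\e$ has a bounded correlation density $f_\e$; neither $\HH$ nor the rough noise $W$ admits such a density. For the rough stochastic term, Proposition \ref{HBDG} forces the spatial-increment seminorm into the estimate of consecutive differences, and increments of $\sigma(s,\cdot,u^{n})-\sigma(s,\cdot,u^{n-1})$ produce cross terms of the type $|u^{n}-u^{n-1}|\cdot|\mathcal{D}_z u^{n}|$ (cf.\ \eqref{490-490}); closing a Gronwall estimate for differences then requires pointwise control of $\cN_{\frac 12-H}u^{n}$, which is exactly what forces the stopping time $\tau_k$ and the $k^2$ factor in Lemma \ref{utildebar}. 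Your a priori bounds on the iterates (the $A_i$/$I_i$ estimates) are fine, since they are linear in $u^{n}$ and the BDG constant is indeed uniform in $\e$; but without convergence of the iterates, Fatou cannot transfer those bounds to $\tilde{u}^{\e}$, and existence of $\tilde{u}^{\e}$ via Girsanov does not by itself link $\tilde{u}^{\e}$ to your Picard sequence. The paper avoids this by mollifying the noise ($W_\eta$), running Picard at fixed $\eta$ where the contraction does hold, obtaining bounds uniform in $(n,\eta,\e)$, letting $\eta\to0$ via tightness to identify the limit with $\tilde{u}^{\e}$, and finally transferring the bound by \cite[Lemma 4.6]{HW2019}. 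Your argument needs this mollification (or some substitute for the convergence step) to be complete.
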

\begin{proof}
We provide the  detailed proof for the result concerning $\tilde{u}^{\e}$,
while the proof for that with respect to $\bar{u}^{\e}$ is similar but simpler and we omit it here.
Following the idea developed by the proof of \cite[Lemma 3.4]{LHW2022}, we approximate the noise $W$ by smoothing it with respect to the space variable. For each $\eta>0$ and $\varphi\in\HHH$, we define
\begin{align*}
W_\eta(\varphi)=&\int^t_0\int_{\RR}[\rho_\eta\ast \varphi](s,y)W(ds,dy)\\
=&\int^t_0\int_{\RR}\int_{\RR}\varphi(s,x)\rho_\eta(x-y)W(ds,dy)dx,
\end{align*}
 where $\rho_\eta(x)=\frac{1}{\sqrt{2\pi \eta}}\exp\left(-\frac{x^2}{2\eta}\right)$.
 Consider the equation
 \begin{equation}\label{tildeueetaDEF}
 \begin{split}
 \tilde{u}^{\e}_\eta(t,x)=&\,I_0(t,x)+\sqrt{\e}\int^t_0\int_{\RR}G(t-s,x-y)\sigma(s,y,\tilde{u}^{\e}_\eta(s,y)){W_\eta}(ds,dy)\\
 &+\,\int^t_0\left\langle G(t-s,x-\cdot)\sigma(s,\cdot,\tilde{u}^{\e}_\eta(s,\cdot)), g^{\e}(s,\cdot)\right\rangle_{\HH_\eta}ds\\
 =:&\,I_0(t,x)+\sqrt{\e}\Phi_{1,\eta}^{\e}(t,x)+\Phi_{2,\eta}^{\e}(t,x).
 \end{split}
 \end{equation}
 We introduce the following Picard iteration:
 \begin{equation}\label{tildeueetaDEFn}
 \begin{split}
 \tilde{u}^{\e,n+1}_\eta(t,x)=&\,I_0(t,x)+\sqrt{\e}\int^t_0\int_{\RR}G(t-s,x-y)\sigma(s,y,\tilde{u}^{\e,n}_\eta(s,y)){W_\eta}(ds,dy)\\
 &+\,\int^t_0\left\langle G(t-s,x-\cdot)\sigma(s,\cdot,\tilde{u}^{\e,n}_\eta(s,\cdot)), g^{\e}(s,\cdot)\right\rangle_{\HH_\eta}ds\\
 =:&\,I_0(t,x)+\sqrt{\e}\Phi_{1,\eta}^{\e,n}(t,x)+\Phi_{2,\eta}^{\e,n}(t,x),
 \end{split}
 \end{equation}
 with $\tilde{u}^{\e,0}_\eta(t,x)=I_0(t,x)$.
From the proof of  \cite[Lemma 3.5]{LHW2022} and by the similar argument to  that in Step 1 in the proof of Lemma \ref{UniBExist}, we know that,  for any fixed $t\in [0,T]$, $\eta>0$ and $\e\in(0,1)$, $\tilde{u}^{\e,n}_{\eta}(t,\cdot)$ converges to $\tilde{u}^{\e}_{\eta}(t,\cdot)$ in $L^p(\Omega\times\mathbb{R})$ when $n$ goes to infinity.

A similar computation as that in Steps 2 and 3 in the proof of \cite[Lemma 3.5]{LHW2022} gives that
\begin{equation}\label{eq u e521}
\begin{split}
&\|\Phi_{1,{\eta}}^{\e,n+1}(t,\cdot)\|^2_{L^p(\Omega\times\mathbb{R})}
+\left[\mathcal{N}^*_{\frac{1}{2}-H,\,p}\Phi_{1,{\eta}}^{\e,n+1}(s)\right]^2\\
\lesssim&\,\int^t_0\left((t-s)+(t-s)^{2H}+(t-s)^{4H-1}\right)\cdot \|\tilde{u}^{\e,n}_{\eta}(s,\cdot)\|^2_{L^p(\Omega\times\RR)}ds\\
&+\,\int^t_0\left((t-s)+(t-s)^{2H}\right)\cdot\left[\mathcal{N}^*_{\frac{1}{2}-H,\,p}\tilde{u}^{\e,n}_{\eta}(s)\right]^2ds.
\end{split}
\end{equation}
Furthermore, as in the proof of Lemma \ref{UniBExist}, we obtain that
\begin{equation}\label{eq u e52}
\begin{split}
&\|\Phi_{2,{\eta}}^{\e,n+1}(t,\cdot)\|^2_{L^p(\Omega\times\mathbb{R})}
+\left[\mathcal{N}^*_{\frac{1}{2}-H,\,p}\Phi^{\e,n+1}_{2,{\eta}}(t)\right]^2\\
\lesssim&\,\int^t_0\left((t-s)+(t-s)^{2H}+(t-s)^{4H-1}\right)\cdot \|\tilde{u}^{\e,n}_{\eta}(s,\cdot)\|^2_{L^p(\Omega\times\RR)}ds\\
&+\,\int^t_0\left((t-s)+(t-s)^{2H}\right)\cdot\left[\mathcal{N}^*_{\frac{1}{2}-H,\,p}\tilde{u}^{\e,n}_{\eta}(s)\right]^2ds.
\end{split}
\end{equation}
 For any $t\ge0$, let
 $$\tilde{\Psi}^{\e,n}_{\eta}(t):=\|\tilde{u}^{\e,n}_{\eta}(t,\cdot)\|^2_{L^p(\Omega\times\RR)}+\left[\mathcal{N}^*_{\frac{1}{2}-H,\,p}\tilde{u}^{\e,n}_{\eta}(t)\right]^2.$$
Putting \eqref{tildeueetaDEFn}, \eqref{eq u e521} and \eqref{eq u e52} together, there exists a constant
  $C_{T,p,H,N}>0$ such that
\begin{align*}
\tilde{\Psi}^{\e,n+1}_{\eta}(t)\leq \,C_{T,p,H,N}\left(\|I_0\|^2_{Z^p(T)}+\int^t_0(t-s)^{4H-1}\cdot \tilde{\Psi}^{\e,n}_{\eta}(s)ds\right).
\end{align*}
Hence, by the extension of Gronwall's lemma \cite[Lemma 15]{Dalang1999}, there exists   a constant $C$ independent of  $\eta$  and $\e$ such that
$$
\sup_{n\geq1}\sup_{t\in[0,T]}\tilde{\Psi}^{\e,n}_{\eta}(t)\leq\, C.
  $$
According to Fatou's lemma, we have that for any $p>\frac{2}{4H-1}$, there exists   a constant $C$  independent of $\e\in(0,1)$ such that
\begin{align}\label{eq Zp sec 5}
\displaystyle\sup_{{\eta}>0}\|\tilde{u}^{\e}_{\eta}\|_{\mathcal{Z}^p(T)}\leq C.
\end{align}

By the same methods as that in the proof of both  \cite[Lemma 4.2 (ii), (iii)]{LHW2022} and Lemma \ref{TimeSpaceRegBdd} (ii), (iii),
we can get the uniform H\"older continuity of $\Phi_{i,{\eta}}^{\e}$, $i=1,2$. This, together with the fact $\tilde{u}^{\e}_{\eta}(0,0)=u_0(0)$  implies  that the laws of the family $\{\tilde{u}^{\e}_{\eta}\}_{\eta>0}$ are tight  in the space $(\cC([0, T]\times\RR), d_{\cC})$.
Thus, $\tilde{u}^{\e}_{\eta}\rightarrow \tilde{u}^{\e}$ almost surely in the space $\left(\mathcal{C}([0,T]\times\RR),d_\mathcal{C}\right)$ as $\eta\rightarrow 0$. Furthermore, we can obtain that $\tilde{u}^{\e}$ is the solution of Eq.\,\eqref{eq SPDE Y-1} using the same method as that in the proofs of \cite[Theorem 1.5]{HW2019} and Proposition \ref{thm solu skeleton}. By \eqref{eq Zp sec 5} and \cite[Lemma 4.6]{HW2019},
we have that for any $p>\frac{2}{4H-1}$,
$\sup_{\e\in(0,1)}\|\tilde{u}^{\e}\|_{\mathcal{Z}^p(T)}<\infty$. The proof is complete.
\end{proof}

 For  any $u\in \mathcal{Z}^p(T)$ and $g\in \mathcal U^N$, let
\begin{align}\label{Phi2definition}
Y(t,x):=\,\int^t_0\left\langle G(t-s,x-\cdot)\sigma(s,\cdot,u(s,\cdot)), g(s,\cdot)\right\rangle_{\HH}ds.
\end{align}
By  Lemma \ref{TimeSpaceRegBdd} and Minkowski's
inequality, we have the following lemma.
\begin{lemma}\label{lemma-5.2}
 Assume that   Hypothesis \ref{Hypoth.1} holds.  Then we have the following results:
\begin{itemize}
\item[(i).]for any $p>\frac{2}{4H-1}$,  there exists a constant $ C_{T, p,H,N}>0$  such that
\begin{align} \label{4-96-bound}
\mathbb{E}\left[\left|\sup_{t\in[0,T],\,x\in\mathbb{R}}
\mathcal{N}_{\frac{1}{2}-H}Y(t,x)\right|^p\right]\leq \, C_{T,p,H,N} \left\|u\right\|^p_{\mathcal{Z}^p(T)};
\end{align}
    \item[(ii).] if $p>\frac{1}{H}$ and
     $0<\gamma< H-\frac{1}{p}$,, then there exists a positive constant {\bf$C_{T,p,H,N,\gamma}$} such that
  \begin{equation}\label{eq lem51}
  \begin{split}
   \mathbb{E}\left[\left|\sup\limits_{\substack{t,\,t+h\in[0,T], \\ x\in \RR}}\big[Y(t+h,x)  -Y(t,x) \big]\right|^p\right]
   \leq  \,C_{T,p,H,N,\gamma}|h|^{p\gamma} \cdot \left\|u\right\|^p_{\mathcal{Z}^p(T)};
   \end{split}
   \end{equation}
   \item[(iii).] if {\bf $p>\frac{1}{H}$ }and
   {\bf$0<\gamma<H-\frac{1}{p}$}, then  there exists a positive constant {\bf$C_{T,p,H,N,\gamma}$} such that
   \begin{equation}\label{eq lem52}
   \begin{split}
  \mathbb{E}\left[\left|\sup\limits_{\substack{t\in[0,T], \\ x,y\in \RR}}[Y(t,x)- Y(t,y)] \right|^p\right]
   \leq   \,C_{ T,p,H,N,\gamma}|x-y|^{p\gamma} \cdot  \left\|u\right\|^p_{\mathcal{Z}^p(T)}.
   \end{split}
   \end{equation}
\end{itemize}
\end{lemma}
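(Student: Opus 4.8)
The plan is to observe that $Y$ defined in \eqref{Phi2definition} is precisely the drift functional that already appears in the skeleton equation \eqref{eq skeleton} and in the approximate equation \eqref{eq u h e}, now evaluated at an arbitrary field $u\in\mathcal Z^p(T)$ and a control $g\in\mathcal U^N$ rather than at the solution itself. Since $g\in\mathcal U^N$ means $g(\omega)\in S^N$ for $\mathbb P$-almost every $\omega$, we have $\int_0^T\|g(\omega,s)\|_{\mathcal H}^2\,ds\le N$ almost surely, so for each fixed $\omega$ the purely deterministic estimates behind Lemma \ref{TimeSpaceRegBdd} apply verbatim to $Y(\omega,\cdot,\cdot)$. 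The only genuinely new ingredient is that $u$ is random; this is absorbed by taking $L^p(\Omega)$ norms and invoking Minkowski's (integral) inequality to interchange the expectation with the spatial $dx$ and the $|h|^{2H-2}dh$ integrals, thereby converting every $L^p(\RR)$ norm appearing in Lemma \ref{TimeSpaceRegBdd} into the corresponding $L^p(\Omega\times\RR)$ norm, i.e. into a piece of $\|u\|_{\mathcal Z^p(T)}$.

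For (i), I would begin from the identity
$$Y(t,x+h)-Y(t,x)=\int_0^t\langle \mathcal D_h G(t-s,x-\cdot)\,\sigma(s,\cdot,u(s,\cdot)),\,g(s,\cdot)\rangle_{\mathcal H}\,ds,$$
square it, apply the Cauchy--Schwarz inequality in $ds$ together with the bound $N$, and expand the resulting $\mathcal H$-norm through \eqref{eq H product}. Splitting the spatial increments of $\sigma(s,\cdot,u(s,\cdot))$ exactly as in the $A_1,A_2,A_3$ decomposition of Step 2 in the proof of Lemma \ref{UniBExist} --- using the growth bound \eqref{lineargrowthuniformlycond}, the Lipschitz bound \eqref{Lipschitzianuniformlycondu}, and the mixed-derivative control \eqref{DuxSigam} --- and estimating the kernel-increment integrals by Lemma \ref{TechLemma3} and the four-part decomposition of $G$ in Lemma \ref{3.1wavekernelexp}, reduces $\big(\mathcal N_{\frac12-H}Y(t,x)\big)^2$ to the same space-time integrals already controlled for $u^g_\e$ in Lemma \ref{TimeSpaceRegBdd}(i). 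Taking $p$-th moments, applying Minkowski to pass $\mathbb E$ through the $dx$ and $dh$ integrals, and using $p>\frac{2}{4H-1}$ (which is exactly the threshold that upgrades the pointwise moment bound to a bound on the supremum over $(t,x)$) then yields \eqref{4-96-bound} with constant proportional to $\|u\|^p_{\mathcal Z^p(T)}$.

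Parts (ii) and (iii) follow the same template with the increment relocated: for (ii) the spatial increment $\mathcal D_hG$ is replaced by the temporal increment $G(t+h-s,\cdot)-G(t-s,\cdot)$, and for (iii) one takes a genuine spatial increment of $Y$ itself. In each case the increment of $Y$ is unfolded into the $I_0,\dots,I_3$ structure of Step 3 in the proof of Lemma \ref{UniBExist}, the time/space increments of the kernel are estimated via Lemma \ref{TechLemma3} and Lemma \ref{3.1wavekernelexp}, the $L^p(\Omega)$ norm is moved through the integrals by Minkowski, and the continuity argument used for Lemma \ref{TimeSpaceRegBdd}(ii),(iii) is applied verbatim; the restrictions $p>\frac1H$ and $0<\gamma<H-\frac1p$ are precisely those needed to extract the H\"older factors $|h|^{p\gamma}$ and $|x-y|^{p\gamma}$ in \eqref{eq lem51} and \eqref{eq lem52} while keeping the constant proportional to $\|u\|^p_{\mathcal Z^p(T)}$.

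The step I expect to be most delicate is not any single inequality but the passage from pointwise $L^p(\Omega)$ moment bounds to a bound on the \emph{supremum} over $(t,x)$ inside the expectation: this is where the integrability thresholds $p>\frac{2}{4H-1}$ and $p>\frac1H$ are indispensable, and where the absence of a semigroup property for the wave kernel forces one to rely on the explicit decomposition of Lemma \ref{3.1wavekernelexp} rather than on heat-kernel scaling. A secondary difficulty is the nonlocal, fractional nature of $\|\cdot\|_{\mathcal H}$: each increment of $Y$ must first be expanded into its $A_1/A_2/A_3$ (or $I_1/I_2/I_3$) pieces before the kernel-increment estimates of Lemma \ref{TechLemma3} become applicable, and Minkowski's inequality must be applied with care so as to keep the $L^p(\Omega)$ and $L^p(\RR)$ norms cleanly separated.
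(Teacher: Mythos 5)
Your proposal is correct and follows essentially the same route as the paper: the paper's entire proof of this lemma is the single remark that it follows ``by Lemma \ref{TimeSpaceRegBdd} and Minkowski's inequality,'' and your plan --- rerun the deterministic estimates behind Lemma \ref{TimeSpaceRegBdd} (the $A_1/A_2/A_3$ and $I_1/I_2/I_3$ decompositions, Lemma \ref{TechLemma3}, the kernel decomposition of Lemma \ref{3.1wavekernelexp}) on $Y$ and use Minkowski's integral inequality to upgrade each $L^p(\RR)$ norm to $L^p(\Omega\times\RR)$, i.e.\ to the $\mathcal Z^p(T)$ norm of $u$ --- is exactly a faithful expansion of that one-line argument. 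The only caution is that your opening phrase about applying the deterministic bounds ``verbatim'' pathwise would, taken literally, produce $\mathbb{E}$ of a pathwise $Z^p$ norm rather than $\|u\|_{\mathcal Z^p(T)}^p$; but your detailed plan in the subsequent paragraphs correctly routes the expectation through the integrals via Minkowski, so no gap remains.
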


 Recall  $\tilde u^{\e}$ and $\bar u^{\e}$ defined by \eqref{eq SPDE Y-1} and \eqref{eq SPDE Z-1}, respectively. For any $k\geq 1$ and  $p>\frac{2}{4H-1}$,  define the  stopping time
\begin{equation}\label{Stoptime1}
\begin{split}
\tau_k:=\inf\bigg\{&r\ge 0:\,\,\sup_{0\leq s\leq r,\,x\in\mathbb{R}}
\mathcal{N}_{\frac{1}{2}-H}\tilde{u}^{\e}(s,x)\geq k,\,\,  \text{or }\,\,\sup_{0\leq s\leq r,\,x\in\mathbb{R}}
\mathcal{N}_{\frac{1}{2}-H}\bar{u}^{\e}(s,x)\geq k\bigg\}.
\end{split}
\end{equation}
By \cite[Lemma 4.2]{LHW2022},  Lemmas  \ref{lemma-5-1} and \ref{lemma-5.2}, we know that
$$
\sup_{\e\in (0,1)}\left\|\sup_{t\in[0,T],\,x\in\mathbb{R}}
\mathcal{N}_{\frac{1}{2}-H}\tilde u^{\e}(t,x)\right\|_{L^p(\Omega)}<\infty, \  \ \
\sup_{\e\in (0,1)}\left\|\sup_{t\in[0,T],\,x\in\mathbb{R}}
\mathcal{N}_{\frac{1}{2}-H}\bar u^{\e}(t,x)\right\|_{L^p(\Omega)}<\infty,
$$
which, together with Markov's inequality, implies that
\begin{equation}\label{stop-infinit}
\tau_k\uparrow \infty,\text{ a.s., as } k\rightarrow \infty.
\end{equation}
For any $t\in [0,T]$, let
\begin{align}
\tilde{u}^{\e}_k(t,\cdot):=\tilde{u}^{\e}(t\wedge \tau_k,\cdot),\ \ \ \ \ \bar{u}^{\e}_k(t,\cdot):=\bar{u}^{\e}(t\wedge \tau_k,\cdot).
\end{align}
 It is easy to see that, for any $t\in [0,T]$ ,
$\tilde{u}^{\e}_k(t,\cdot)=\tilde{u}^{\e}(t,\cdot)$ and  $\bar{u}^{\e}_k(t,\cdot)=\bar{u}^{\e}(t,\cdot)$ when $\tau_k> T$.
\begin{lemma}\label{utildebar}
For any $\e>0$,
\begin{align}\label{u-d-c-E}
\lim_{\e\rightarrow0}\big\|\tilde{u}_k^{\e}-\bar{u}_k^{\e}\big\|_{\mathcal{Z}^p(T)}=0.
\end{align}
\end{lemma}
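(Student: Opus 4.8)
The plan is to subtract the two mild formulations and run a Gronwall argument on the stopped difference, using the prefactor $\sqrt{\e}$ to annihilate the genuinely stochastic part and the Lipschitz structure of $\sigma$ to absorb the drift part. Subtracting \eqref{eq SPDE Z-1} from \eqref{eq SPDE Y-1}, the difference $w^{\e}:=\tilde{u}^{\e}-\bar{u}^{\e}$ solves
\begin{equation*}
w^{\e}(t,x)=\sqrt{\e}\int^t_0\int_{\RR}G(t-s,x-y)\sigma(s,y,\tilde{u}^{\e}(s,y)){W}(ds,dy)+\int^t_0\left\langle G(t-s,x-\cdot)\big[\sigma(s,\cdot,\tilde{u}^{\e}(s,\cdot))-\sigma(s,\cdot,\bar{u}^{\e}(s,\cdot))\big], g^{\e}(s,\cdot)\right\rangle_{\HH}ds.
\end{equation*}
Following the structure of the norm \eqref{ZNorm}, I would set
\[
D_1^{\e}(t):=\big\|\tilde{u}_k^{\e}(t,\cdot)-\bar{u}_k^{\e}(t,\cdot)\big\|^2_{L^p(\Omega\times\RR)},\qquad D_2^{\e}(t):=\Big[\cN^*_{\frac12-H,\,p}\big(\tilde{u}_k^{\e}-\bar{u}_k^{\e}\big)(t)\Big]^2,
\]
so that $\|\tilde{u}_k^{\e}-\bar{u}_k^{\e}\|^2_{\mathcal{Z}^p(T)}\lesssim \sup_{t\in[0,T]}\big(D_1^{\e}(t)+D_2^{\e}(t)\big)$, and estimate both quantities, keeping in mind that up to $\tau_k$ the seminorms $\cN_{\frac12-H}\tilde{u}^{\e}$ and $\cN_{\frac12-H}\bar{u}^{\e}$ are bounded by $k$.

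For the stochastic integral I would invoke the Burkholder--Davis--Gundy inequality of Proposition \ref{HBDG} together with the wave kernel decomposition (Lemma \ref{3.1wavekernelexp}) and the technical bounds of Lemma \ref{TechLemma3}, exactly as in Steps 2 and 3 of the proof of Lemma \ref{UniBExist}. Using the linear growth \eqref{lineargrowthuniformlycond} of $\sigma$ and the uniform bound $\sup_{\e\in(0,1)}\|\tilde{u}^{\e}\|_{\mathcal{Z}^p(T)}<\infty$ from Lemma \ref{lemma-5-1}, the contribution of this term to $D_1^{\e}(t)+D_2^{\e}(t)$ is of order $\e\cdot C$, with $C$ independent of $\e$; this is precisely the term that forces the limit to vanish as $\e\to 0$. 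For the drift-difference term I would use the Cauchy--Schwarz inequality with $\int_0^T\|g^{\e}(s)\|_{\HH}^2ds\le N$, expand the $\HH$-inner product via \eqref{eq H product}, and handle the increments of $\sigma(s,\cdot,\tilde{u}^{\e})-\sigma(s,\cdot,\bar{u}^{\e})$ with the first-order Taylor expansion \eqref{490-490}, i.e. the same mean-value argument used in the uniqueness proof of Proposition \ref{thm solu skeleton}. The Lipschitz bound \eqref{Lipschitzianuniformlycondu} and the assumptions \eqref{DuSigam}--\eqref{DuSigamAdd} on $\sigma$ then yield, after a change of variables and Minkowski's inequality, an estimate of the form
\begin{equation*}
D_1^{\e}(t)+D_2^{\e}(t)\lesssim \e\,C+C_k\int_0^t\big((t-s)^{2H}+(t-s)^{4H-1}\big)\big[D_1^{\e}(s)+D_2^{\e}(s)\big]\,ds,
\end{equation*}
and the generalized Gronwall lemma \cite[Lemma 15]{Dalang1999} gives $\sup_{t\in[0,T]}(D_1^{\e}(t)+D_2^{\e}(t))\lesssim \e\,C_k\to 0$ as $\e\to 0$, which is the claim.

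The main obstacle---and the reason the statement is phrased for the stopped processes---is the appearance in the analogue of \eqref{490-490} of the cross terms $|\tilde{u}^{\e}(s,y)-\bar{u}^{\e}(s,y)|^2\cdot\big[|\tilde{u}^{\e}(s,y+h)-\tilde{u}^{\e}(s,y)|^2+|\bar{u}^{\e}(s,y+h)-\bar{u}^{\e}(s,y)|^2\big]$, whose spatial increments are controlled only through $\cN_{\frac12-H}\tilde{u}^{\e}$ and $\cN_{\frac12-H}\bar{u}^{\e}$. These seminorms are not a priori uniformly bounded in $\e$, but by the definition \eqref{Stoptime1} of $\tau_k$ they do not exceed $k$ for $s\le \tau_k$, which is exactly what renders the constant $C_k$ finite and allows the Gronwall estimate to close. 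A secondary technicality is that the kernel argument $t\wedge\tau_k-s$ is random; this is handled by the monotonicity of the scalar kernel estimates $\int_{\RR}|G(t-s,y)|^2dy=(t-s)$ and $\int_{\RR^2}|\mathcal{D}_zG(t-s,y)|^2|z|^{2H-2}dzdy\lesssim(t-s)^{2H}$ in the time variable. The removal of the stopping, i.e. passing from $\tilde{u}_k^{\e},\bar{u}_k^{\e}$ back to $\tilde{u}^{\e},\bar{u}^{\e}$ via $\tau_k\uparrow\infty$ in \eqref{stop-infinit}, is not needed here and is carried out afterwards in the proof of Proposition \ref{proposition-4-3}.
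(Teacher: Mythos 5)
Your proposal is correct and follows essentially the same route as the paper: the paper likewise splits the stopped difference into the stochastic integral $\Phi_1^{\e}$ (whose $\mathcal{Z}^p(T)$-norm is bounded uniformly in $\e$, so that the prefactor $\sqrt{\e}$ kills it) and the drift-difference term $\Phi_2^{\e}$, which it estimates via Cauchy--Schwarz, the mean-value identity \eqref{490-490}, Lemma \ref{TechLemma3}, and the stopping time to produce the $k^2$ factor in \eqref{5147}, before closing with the generalized Gronwall lemma. Your explicit identification of the cross terms in \eqref{490-490} as the reason the stopping time is needed matches exactly the role of the $k^2$ factor in the paper's estimate \eqref{J21-bound}.
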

\begin{proof}
Let
\begin{align}\label{Phi1definition}
\Phi_{1}^{\e}(t,x):=\,\int^{t }_0\int_{\RR}G(t-s,x-y)\sigma(s,y,\tilde{u}_k^{\e}(s,y)){W}(ds,dy)
\end{align}
 and
\begin{equation}\label{Phi3definition}
\begin{split}
 \Phi_2^{\e}(t,x):=\int^{t}_0\left\langle G(t-s,x-\cdot)\triangle(s,\cdot,\cdot). g^{\e}(s,\cdot)\right\rangle_{\HH}ds.
\end{split}
\end{equation}
where  $\triangle(t,x,y):=\sigma(t,x,\tilde{u}^{\e}_k(t,y))-\sigma(t,x,\bar{u}^{\e}_k(t,y))$.
Then, by \eqref{eq SPDE Y-1} and \eqref{eq SPDE Z-1}, we have
\begin{align}\label{5128-inequality}
\big\|\tilde{u}_k^{\e}-\bar{u}_k^{\e}\big\|_{\mathcal{Z}^p(T)}\leq\,\sqrt{\e}
\big\|\Phi_1^{\e}\big\|_{\mathcal{Z}^p(T)}+\big\|\Phi_2^{\e}\big\|_{\mathcal{Z}^p(T)}.
\end{align}
By using the same technique as in the proof of Lemmas 4.5 and  4.6   in \cite{HW2019}, we have $\big\|\Phi_1^{\e}\big\|_{\mathcal{Z}^p(T)}<\infty$ for any $p>\frac{2}{4H-1}$.
By   \eqref{Lipschitzianuniformlycondu} and a similar computation  as  that   in Step 2 in the proof of Lemma \ref{UniBExist}, we have
\begin{equation}\label{5139}
\begin{split}
\left\| \Phi_2^{\e}(t,\cdot)\right\|^2_{L^p(\Omega\times\RR)}
\lesssim&\,
\int^t_0(t-s)^{2H}\cdot
\big\|  \tilde{u}_k^{\e}(s,\cdot)-\bar{u}_k^{\e}(s,\cdot) \big\|_{L^p(\Omega\times\RR)}^2ds\\
&+\int^t_0(t-s)\cdot \left[\mathcal{N}^*_{\frac{1}{2}-H,\,p}
\left( \tilde{u}_k^{\e}(s)-\bar{u}_k^{\e}(s)\right)\right]^2ds.
\end{split}
\end{equation}
For the term $\mathcal{N}^*_{\frac{1}{2}-H,p}\Phi_2^{\e}(t)$, we assert that
\begin{equation}\label{5147}
\begin{split}
&\left[\mathcal{N}^*_{\frac{1}{2}-H,p}
\Phi_2^{\e}(t)\right]^2\\
\lesssim&\,k^2 \int^t_0\left((t-s)^{2H}+(t-s)^{4H-1}\right)\cdot \left\|\tilde{u}_k^{\e}(s,\cdot)-\bar{u}_k^{\e}(s,\cdot)\right\|^2_{L^p(\Omega\times\RR)}ds\\
&+\int^t_0(t-s)^{2H}\cdot \left[\mathcal{N}^*_{\frac{1}{2}-H,\,p}
\left(\tilde{u}_k^{\e}(s)-\bar{u}_k^{\e}(s)\right)\right]^2ds .
\end{split}
\end{equation}
Putting  \eqref{5128-inequality}, \eqref{5139} and \eqref{5147} together,  and by the  Gronwall inequality, we have that, for any fixed $k\geq 1$ and $p>\frac{2}{4H-1}$,
 \begin{align*}
  \lim_{\e\rightarrow0}\left\|\tilde{u}_k^{\e}-\bar{u}_k^{\e}\right\|_{\mathcal{Z}^p(T)}=0.
\end{align*}

It remains to prove \eqref{5147}.
Since $g^{\e}\in \mathcal{U}^N$, by the Cauchy-Schwarz inequality and \eqref{eq H product}, we have
\begin{align}\label{Phi_3ePhi_3etxh}
&\mathbb{E}\left[\big|\Phi_2^{\e}(t,x+h)-\Phi_2^{\e}(t,x)\big|^p\right]\notag \\
=&\,\mathbb{E}\left[\Big|\int^t_0\big\langle \mathcal{D}_{h}G(t-s,x-\cdot)\triangle(s,\cdot,\cdot), g^{\e}(s,\cdot)\big\rangle_{\HH}ds\Big|^p\right] \notag\\
\lesssim&\,\mathbb{E}\Bigg[\int^t_0\int_{\RR^2}\big| \mathcal{D}_{h}G(t-s,x-y-z)\triangle(s,y+z,y+z)\\
&\qquad\qquad\quad-\mathcal{D}_{h}G(t-s,x-y)\triangle(s,y,y)\big|^2\cdot |z|^{2H-2}dzdyds\Bigg]^{\frac{p}{2}}\notag \\
\lesssim&\,\mathcal{R}_1^{\e}(t,x,h)+\mathcal{R}_2^{\e}(t,x,h)+\mathcal{R}_3^{\e}(t,x,h),\notag
\end{align}
where
\begin{align*}
\mathcal{R}_1^{\e}(t,x,h):=&\,\mathbb{E}\Bigg[\int^t_0\int_{\RR^2}\big| \mathcal{D}_{h}G(t-s,x-y-z)\big|^2\\
&\qquad\qquad \cdot\big|\triangle(s,y+z,y+z)
 -\triangle(s,y+z,y)\big|^2\cdot |z|^{2H-2}dzdyds\Bigg]^{\frac{p}{2}};\\
\mathcal{R}_2^{\e}(t,x,h):=&\,\mathbb{E}\Bigg[\int^t_0\int_{\RR^2}\big| \Box_{h,-z}G(t-s,x-y)\big|^2\cdot \big|\triangle(s,y+z,y)\big|^2\cdot|z|^{2H-2}dzdyds\Bigg]^{\frac{p}{2}};\\
\mathcal{R}_3^{\e}(t,x,h):=&\,\mathbb{E}\Bigg[\int^t_0\int_{\RR^2}\big|  \mathcal{D}_{h}G(t-s,x-y)\big|^2\cdot \big|\triangle(s,y+z,y)-\triangle(s,y,y)\big|^2 \cdot|z|^{2H-2}dzdyds\Bigg]^{\frac{p}{2}}.
\end{align*}
 By a change of variable and \eqref{490-490}, we have
 \begin{align*}
&\mathcal{R}_1^{\e}(t,x,h)\\
 \lesssim&\,\mathbb{E}\Bigg[\int^t_0\int_{\RR^2}\big| \mathcal{D}_{h}G(t-s,x-y)\big|^2\cdot |z|^{2H-2}dz\\
&\qquad \cdot\bigg[\big|\tilde{u}_k^{\e}(s,y+z)-\bar{u}_k^{\e}(s,y+z)
 -\tilde{u}_k^{\e}(s,y)+\bar{u}_k^{\e}(s,y)\big|^2\\
 &\qquad\quad+\,\big|\tilde{u}_k^{\e}(s,y)-\bar{u}_k^{\e}(s,y)\big|^2
\cdot \left(\big|\tilde{u}_k^{\e}(s,y+z)-\tilde{u}_k^{\e}(s,y)\big|^2+\big|\bar{u}_k^{\e}(s,y+z)-\bar{u}_k^{\e}(s,y)\big|^2\right)\bigg]
dyds\Bigg]^{\frac{p}{2}}.
 \end{align*}
Consequently,  we have
\begin{equation}\label{J21-bound}
\begin{split}
&\int_{\RR}\left(\int_{\RR}\mathcal{R}_{1}^{\e}(t,x,h)dx\right)^{\frac{2}{p}}\cdot |h|^{2H-2}dh \\
\lesssim&\,\int^t_0\int_{\RR^2}\big| \mathcal{D}_{h}G(t-s,y)\big|^2\cdot |h|^{2H-2}dhdy
\cdot
\Bigg[k^2\left(\int_{\RR}\mathbb{E}\left[\left|\tilde{u}_k^{\e}(s,x)-\bar{u}_k^{\e}(s,x)\right|^p\right]dx\right)^{\frac{2}{p}}\\
&\qquad+\int_{\RR}\left(\int_{\RR}\mathbb{E}\big[\big|\tilde{u}_k^{\e}(s,x+z)-\bar{u}_k^{\e}(s,x+z)
 -\tilde{u}_k^{\e}(s,x)+\bar{u}_k^{\e}(s,x)\big|^p\big]dx\right)^{\frac{2}{p}}\cdot |z|^{2H-2}dz \Bigg]ds\\
\lesssim&\,\int^t_0(t-s)^{2H}\cdot \left[k^2 \cdot \left\|\tilde{u}_k^{\e}(s,\cdot)-\bar{u}_k^{\e}(s,\cdot)\right\|^2_{L^p(\Omega\times\RR)}+\left[\mathcal{N}^*_{\frac{1}{2}-H,\,p}
\left(\tilde{u}_k^{\e}(s)-\bar{u}_k^{\e}(s)\right)\right]^2\right]ds,
\end{split}
\end{equation}
where  a change of variable, the Minkowski inequality and Lemma \ref{TechLemma3} have been used.
Invoking the Lipschitz continuity of $\sigma$, a change of variable, Minkowski's inequality and Lemma \ref{TechLemma3}, we have
\begin{equation}\label{J222-bound}
\begin{split}
&\int_{\RR}\left(\int_{\RR}\mathcal{R}_{2}^{\e}(t,x,h)dx\right)^{\frac{2}{p}}\cdot |h|^{2H-2}dh \\
\lesssim&\,\int^t_0\int_{\RR^3}\big| \Box_{h,-z}G(t-s,y)\big|^2\cdot|z|^{2H-2}\cdot|h|^{2H-2}dzdhdy
\cdot\left(\int_{\RR}\mathbb{E}\left[\left|\tilde{u}_k^{\e}(s,x)-\bar{u}_k^{\e}(s,x)\right|^p\right]dx\right)^{\frac{2}{p}}ds\\
\lesssim&\,\int^t_0(t-s)^{4H-1}\cdot \left\|\tilde{u}_k^{\e}(s,\cdot)-\bar{u}_k^{\e}(s,\cdot)\right\|^2_{L^p(\Omega\times\RR)}ds.
\end{split}
\end{equation}
It follows from the same analysis to \eqref{h1dayu} and  \eqref{h1xiaoyu} and Lemma \ref{TechLemma3} that
\begin{align}\label{5-155R1}
&\int_{\RR}\left(\int_{\RR}\mathcal{R}_{3}^{\e}(t,x,h)\right)^{\frac{2}{p}}\cdot |h|^{2H-2}dh\notag \\
\lesssim&\,\int^t_0\int_{\RR^2}\big|  \mathcal{D}_{h}G(t-s,y)\big|^2\cdot|h|^{2H-2}dhdy
\cdot\left(\int_{\RR}\mathbb{E}\left[\left|\tilde{u}_k^{\e}(s,x)-\bar{u}_k^{\e}(s,x)\right|^p\right]dx\right)^{\frac{2}{p}}ds \\
\lesssim&\,\int^t_0(t-s)^{2H}\cdot \left\|\tilde{u}_k^{\e}(s,\cdot)-\bar{u}_k^{\e}(s,\cdot)\right\|^2_{L^p(\Omega\times\RR)}ds.\notag
\end{align}
Putting \eqref{Phi_3ePhi_3etxh}-\eqref{5-155R1} together, we have \eqref{5147}. The proof is complete.
\end{proof}

We now prove Proposition \ref{proposition-4-3}.
\begin{proof}[Proof of Proposition \ref{proposition-4-3}]
 By \cite[Lemma 4.2]{LHW2022}, Lemmas  \ref{lemma-5-1} and  \ref{lemma-5.2}, we know that  the probability measures on the space $(\mathcal{C}([0,T]\times\RR),\mathcal{B}(\mathcal{C}([0,T]\times\RR)), d_{\mathcal{C}})$ corresponding to the processes    $\{\tilde{u}^{\e}-\bar{u}^{\e}\}_{\e>0}$   are tight. Thus, there is a subsequence $\e_n\downarrow 0$ such that $\tilde{u}^{\e_n}-\bar{u}^{\e_n}$ converges weakly
  to some stochastic process $Z=\{Z(t,x),  t\in[0,T], x\in \mathbb R\}$ in  $(\mathcal{C}([0,T]\times\RR),\mathcal{B}(\mathcal{C}([0,T]\times\RR)), d_{\mathcal{C}})$.

 On the other hand, for any $k\ge1, p>\frac{2}{4H-1}, \gamma>0$,
\begin{equation}\label{eq P}
\begin{split}
& \mathbb P\left(\sup_{0\le t\le T}\int_{\mathbb  R}\left|\tilde{u}^{\e}(t,x)-\bar{u}^{\e}(t,x)\right|^p dx >\gamma\right)\\
\le\, & \mathbb P\left(\sup_{0\le t\le T}\int_{\mathbb  R}\left|\tilde{u}^{\e}(t,x)-\bar{u}^{\e}(t,x)\right|^pdx >\gamma, \tau_k> T\right)
 +\mathbb P\left(\tau_k\leq T\right)\\
 \le \, &\mathbb P\left(\sup_{0\le t\le T}\int_{\mathbb  R}\left|\tilde{u}_k^{\e}(t,x)-\bar{u}_k^{\e}(t,x)\right|^p dx >\gamma\right)
 +\mathbb P\left(\tau_k\leq T\right).
\end{split}
\end{equation}
First   letting $\e\rightarrow0$ and then letting  $k\rightarrow\infty$,    by  Lemma \ref{utildebar} and \eqref{stop-infinit}, we have
 $$
\sup_{0\le t\le T}\int_{\mathbb  R}\left|\tilde{u}^{\e}(t,x)-\bar{u}^{\e}(t,x)\right|^p dx   \rightarrow 0 \text{ in probability,\,\,\,\,  as }\,  \e\rightarrow0.
 $$
 Thus, for any fixed $t\in [0,T]$, the processes  $\{\tilde{u}^{\e}(t, x)(\omega)-\bar{u}^{\e}(t,x)(\omega); (\omega,x)\in \Omega\times\mathbb R\}$ converges to 0 in probability in the product probability space $( \Omega\otimes\mathbb R, \mathbb P\otimes dx )$. This, together with the weak convergence  of $\{\tilde{u}^{\e_n}-\bar{u}^{\e_n}\}_{n\ge1}$ and the uniqueness of the limit distribution  of  $\tilde{u}^{\e_n}-\bar{u}^{\e_n}$, implies that   $Z(t,x)\equiv0, t\in [0,T], x\in \mathbb R$, almost surely.
Thus,  as $\e\rightarrow0$,  $\tilde{u}^{\e}-\bar{u}^{\e}$ converges weakly to  $0$ in $(\mathcal{C}([0,T]\times\RR),\mathcal{B}(\mathcal{C}([0,T]\times\RR)), d_{\mathcal{C}})$. Equivalently,
  the sequence of real-valued random variables
$   d_\mathcal{C}(\tilde{u}^{\e},\bar{u}^{\e})$ converges to $0$  in distribution as $\e$ goes to $0$.
It follows from \cite[p. 98, Exercise 4]{Chung} that
\begin{align*}  d_\mathcal{C}(\tilde{u}^{\e},\bar{u}^{\e})\longrightarrow 0  \text{ in probability},\,\,\, \text{as}\,\,\e\rightarrow0. \end{align*}
The proof is complete.
\end{proof}

\section{Appendix}

 In this section, we first give some lemmas  related to the wave Green's function $G(t,x)$ from \cite{LHW2022}, which play a key role for the proofs in this paper. Then we present a lemma providing  the H\"older continuity of $I_0(t,x)$.  Lastly, we give an  auxiliary  lemma used in the  proof of Proposition \ref{thm solu skeleton}.

\begin{lemma}\label{3.1wavekernelexp}(\cite[Lemma 3.1]{LHW2022}, \cite[Remark 3.3]{LHW2022})
The wave kernel $G(t,x)=\frac{1}{2}\textbf{1}_{\{|x|<t\}}$ can be expressed as
\begin{equation}\label{eq.SumKer}
  \begin{split}
  G(t-s,x-y)=&  \int_{\RR} \cC_{\beta}(t-r,x-z)\cS_{1-\beta}(r-s,z-y)dz \\
 &+\int_{\RR} \cS_{\alpha}(t-r,x-z)\cC_{1-\alpha}(r-s,z-y)dz \\
&+\int_{\RR} \cS(t-r,x-z)\mathcal{E}(r-s,z-y)dz \\
&+\int_{\RR} \mathcal{E}(t-r,x-z)\cS(r-s,z-y)dz,
  \end{split}
\end{equation}
where $\alpha,\beta\in(0,1)$, $\cS(t,x)=\cS_{1}(t,x)=G(t,x)=\frac 12 \1_{\{|x|<t\}}$ and
\begin{equation}\label{eq.cC_alpha}
	\begin{cases}
		\mathcal{E}(t,x) :=\frac{1}{\pi} \frac{t}{t^2+x^2}\,, \\
  \cS_{\alpha}(t,x) :=\frac{\Gamma(1-\alpha)}{2\pi}\cos\lc\frac{\alpha\pi}{2}\rc \lk(t+|x|)^{\alpha-1}+\hbox{\rm sgn}(t-|x|)\big|t-|x|\big|^{\alpha-1}\rk\,, \\
  \cC_{1-\alpha}(t,x) :=\frac{\Gamma(\alpha)}{2\pi}\bigg[\cos\lc\frac{\alpha\pi}{2}\rc\blk \big|t+|x|\big|^{-\alpha}+\big|t-|x|\big|^{-\alpha}\brk\\
 \qquad\qquad\qquad\qquad -2 \cos\lc \alpha\tan^{-1}\lc\frac{|x|}{t}\rc\rc [t^2+x^2]^{-\frac{\alpha}{2}}\bigg]\,.
	\end{cases}
\end{equation}
\end{lemma}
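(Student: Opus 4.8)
The plan is to pass to the spatial Fourier transform and verify the decomposition \eqref{eq.SumKer} as an identity between Fourier multipliers. Each of the four terms on the right-hand side is a convolution in the space variable, which $\cF$ turns into a product, and $\cF$ is injective on the relevant space of tempered distributions; hence it suffices to check, for every $\xi$ and every intermediate time $r\in(s,t)$, that
\[
\cF G(t-s,\cdot)(\xi)=\sum_{j=1}^{4}\big(\text{product of the two transforms in the }j\text{-th term}\big).
\]
The elementary ingredients are immediate: since $G(\tau,\cdot)=\tfrac12\1_{\{|x|<\tau\}}$ a direct computation gives $\cF G(\tau,\cdot)(\xi)=\cF\cS(\tau,\cdot)(\xi)=|\xi|^{-1}\sin(\tau|\xi|)$, while $\mathcal{E}(\tau,\cdot)$ is the Poisson kernel with $\cF\mathcal{E}(\tau,\cdot)(\xi)=e^{-\tau|\xi|}$.

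The heart of the argument is computing the two fractional transforms. First I would use that $\cS_\alpha(\tau,\cdot)$ and $\cC_{1-\alpha}(\tau,\cdot)$ are even in $x$, so that $\cF$ reduces to a cosine transform, and then split the integral at $|x|=\tau$ to separate the homogeneous pieces (the powers of $\tau\pm|x|$ and of $\tau^2+x^2$) appearing in the definitions \eqref{eq.cC_alpha}; note that $(\tau^2+x^2)^{-\alpha/2}\cos(\alpha\tan^{-1}(|x|/\tau))=\mathrm{Re}\,(\tau+i|x|)^{-\alpha}$. Evaluating each piece with the classical formulas $\int_0^\infty u^{\alpha-1}\cos(\xi u)\,du=\Gamma(\alpha)\cos(\tfrac{\alpha\pi}{2})\xi^{-\alpha}$ and $\int_0^\infty u^{\alpha-1}\sin(\xi u)\,du=\Gamma(\alpha)\sin(\tfrac{\alpha\pi}{2})\xi^{-\alpha}$ (shifting the argument by $\tau$ produces the angle-addition factors $\cos(\tau\xi),\sin(\tau\xi)$), all contributions combine and the normalising constants collapse to $1$ via the reflection formula $\Gamma(\alpha)\Gamma(1-\alpha)=\pi/\sin(\pi\alpha)$ together with $\cos(\tfrac{\alpha\pi}{2})\sin(\tfrac{\alpha\pi}{2})=\tfrac12\sin(\pi\alpha)$. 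I expect the clean expressions
\[
\cF\cS_\alpha(\tau,\cdot)(\xi)=|\xi|^{-\alpha}\sin(\tau|\xi|),\qquad \cF\cC_{1-\alpha}(\tau,\cdot)(\xi)=|\xi|^{\alpha-1}\big(\cos(\tau|\xi|)-e^{-\tau|\xi|}\big),
\]
where the extra $-e^{-\tau|\xi|}$ in the second transform is exactly the contribution of the analytic-continuation piece $\mathrm{Re}\,(\tau+i|x|)^{-\alpha}$ (a contour shift turns it into a Poisson factor). As a consistency check, at $\alpha=1$ the first formula returns $\cF G$, matching $\cS_1=G$.

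With these in hand the verification is purely trigonometric, and crucially the fractional indices are arranged so that every power of $|\xi|$ cancels: using $\cC_\beta=\cC_{1-(1-\beta)}$ (index $-\beta$), term 1 carries $|\xi|^{-\beta}\cdot|\xi|^{\beta-1}=|\xi|^{-1}$, term 2 carries $|\xi|^{-\alpha}\cdot|\xi|^{\alpha-1}=|\xi|^{-1}$, and terms 3, 4 carry $|\xi|^{-1}$. Writing $a=(t-r)|\xi|$ and $b=(r-s)|\xi|$, the four products multiplied by $|\xi|$ are, respectively, $(\cos a-e^{-a})\sin b$, $\sin a\,(\cos b-e^{-b})$, $\sin a\,e^{-b}$ and $e^{-a}\sin b$. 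The two occurrences of $e^{-a}\sin b$ cancel and the two occurrences of $\sin a\,e^{-b}$ cancel, leaving $\cos a\sin b+\sin a\cos b=\sin(a+b)=\sin((t-s)|\xi|)=|\xi|\,\cF G(t-s,\cdot)(\xi)$, manifestly independent of $r,\alpha,\beta$. Inverting $\cF$ yields \eqref{eq.SumKer}.

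The main obstacle is the middle step: the kernels $\cS_\alpha$ and $\cC_{1-\alpha}$ are genuine tempered distributions (their power-law tails are not integrable, and they carry a borderline singularity at $|x|=\tau$), so their Fourier transforms must be justified by regularisation or by analytic continuation in $\alpha$ rather than by absolutely convergent integrals, and one must track the phase constants $\cos(\tfrac{\alpha\pi}{2})$ and the Gamma factors precisely to confirm that the overall normalisation is exactly $1$. Everything after that is routine bookkeeping. Alternatively, since the statement is quoted verbatim, one may simply invoke \cite[Lemma 3.1 and Remark 3.3]{LHW2022}.
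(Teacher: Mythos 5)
Your proposal is correct, but note that the paper itself offers no proof of this lemma at all: it is imported verbatim from the cited source, so the paper's ``proof'' is precisely your fallback option of invoking \cite[Lemma 3.1, Remark 3.3]{LHW2022}. What you have actually done is reconstruct the argument of that source: there the kernels $\cS_\alpha$, $\cC_{1-\alpha}$, $\mathcal E$ are \emph{defined} as inverse Fourier transforms of the multipliers $|\xi|^{-\alpha}\sin(\tau|\xi|)$, $|\xi|^{\alpha-1}\bigl(\cos(\tau|\xi|)-e^{-\tau|\xi|}\bigr)$, $e^{-\tau|\xi|}$, the explicit formulas \eqref{eq.cC_alpha} are then derived from the classical sine/cosine Mellin integrals, and the decomposition \eqref{eq.SumKer} is exactly the angle-addition identity $\sin(a+b)=\sin a\cos b+\cos a\sin b$ with the $e^{-a}\sin b$ and $\sin a\,e^{-b}$ terms added and subtracted so that each summand is the transform of a genuine kernel. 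I checked your key computations and they are right: the two multiplier formulas do come out with constant exactly $1$ (the coefficient $\frac{\Gamma(1-\alpha)}{2\pi}\cos(\frac{\alpha\pi}{2})$ combines with $2\Gamma(\alpha)\sin(\frac{\alpha\pi}{2})$ via the reflection and double-angle formulas, as you say), the index pairing $\beta\leftrightarrow 1-\beta$, $\alpha\leftrightarrow 1-\alpha$ makes every power of $|\xi|$ collapse to $|\xi|^{-1}$, and the exponential terms cancel in pairs leaving $|\xi|^{-1}\sin((t-s)|\xi|)=\cF G(t-s,\cdot)(\xi)$, independently of $r$, $\alpha$, $\beta$. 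The one gap you flag yourself -- that $\cS_\alpha$ and $\cC_{1-\alpha}$ have non-integrable tails and a borderline singularity at $|x|=\tau$, so the transforms and the convolution theorem need conditionally convergent integrals, Abel regularisation, or analytic continuation in $\alpha$ -- is genuine but routine, and it is the same technical burden carried by the cited source. So your write-up buys self-containedness that the paper chose not to provide, at the cost of that (standard) distributional justification.
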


Recall $\mathcal{D}_{h}f(t,x)$, $\Box_{y,h}f(t,x)$ defined by \eqref{TechDt} and \eqref{TechBoxt}, respectively.
\begin{lemma}(\cite[(3.33), (3.34)]{LHW2022})\label{TechLemma3}
For any $H\in (\frac{1}{4},\frac{1}{2})$ and any $t>0$, there exists some constant $C_H$ such that
\begin{align}\label{Hu-3-32}
\int_{\RR^2}|\mathcal{D}_{h}G(t,x)|^2\cdot |h|^{2H-2}dhdx\leq C_H t^{2H}
\end{align}
and
\begin{align}\label{Hu-3-33}
\int_{\RR^3}|\Box_{y,h}G(t,x)|^2 |y|^{2H-2}dy|h|^{2H-2}dhdx\leq C_H t^{4H-1}.
\end{align}
\end{lemma}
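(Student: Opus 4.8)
The plan is to reduce both \eqref{Hu-3-32} and \eqref{Hu-3-33} to explicit one–dimensional frequency integrals, exploiting that $G(t,\cdot)=\frac12\mathbf 1_{(-t,t)}$ has the elementary Fourier transform $\cF G(t,\cdot)(\xi)=\frac{\sin(\xi t)}{\xi}$ and that the Gagliardo form of the $\HH$–norm in \eqref{eq H product} is Plancherel–equivalent to the frequency weight $|\xi|^{1-2H}$. Throughout, every integrand is nonnegative, so Tonelli's theorem permits interchanging the integrations freely. As a preliminary step I would record that $G(t,\cdot)$ and each difference $\mathcal D_h G(t,\cdot)$ (with $\mathcal D_h$ as in \eqref{TechDt}) indeed lie in $\HH$, so that \eqref{eq H product} is legitimately applicable.

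For \eqref{Hu-3-32}, I would first observe that its left–hand side is exactly $\tfrac{1}{c_{2,H}}\|G(t,\cdot)\|_{\HH}^2$ written in its second (Gagliardo) form, with $h$ playing the role of the increment. By \eqref{eq H product} this equals $\frac{c_{1,H}}{c_{2,H}}\int_\RR\frac{\sin^2(\xi t)}{\xi^2}\,|\xi|^{1-2H}d\xi=\frac{c_{1,H}}{c_{2,H}}\int_\RR \sin^2(\xi t)\,|\xi|^{-1-2H}d\xi$, which converges (near $0$ the integrand behaves like $|\xi|^{1-2H}$, at infinity like $|\xi|^{-1-2H}$). The substitution $\xi\mapsto\xi/t$ then pulls out the factor $t^{2H}$ and leaves the finite constant $C_H=\frac{c_{1,H}}{c_{2,H}}\int_\RR\sin^2(u)|u|^{-1-2H}du$. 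An equivalent elementary route avoids Fourier analysis: for fixed $h$ the inner integral $\int_\RR|\mathcal D_hG(t,x)|^2dx$ equals $\tfrac14$ times the Lebesgue measure of the symmetric difference of $(-t,t)$ and $(-t-h,t-h)$, namely $\tfrac12(|h|\wedge 2t)$, and $\int_\RR\tfrac12(|h|\wedge 2t)|h|^{2H-2}dh=C_Ht^{2H}$ by the same scaling $h\mapsto th$.

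For \eqref{Hu-3-33} I would use that the mixed second difference factors as $\Box_{y,h}G=\mathcal D_y\big(\mathcal D_hG\big)$. Performing the $x$– and $y$–integrals first (for fixed $h$) and applying \eqref{eq H product} to the function $\mathcal D_hG(t,\cdot)$ gives $\int_{\RR^2}|\Box_{y,h}G(t,x)|^2|y|^{2H-2}dy\,dx=\frac{c_{1,H}}{c_{2,H}}\int_\RR\big|\cF[\mathcal D_hG(t,\cdot)](\xi)\big|^2|\xi|^{1-2H}d\xi$, and since $\cF[\mathcal D_hG(t,\cdot)](\xi)=(e^{i\xi h}-1)\frac{\sin(\xi t)}{\xi}$ the symbol equals $4\sin^2(\xi h/2)\frac{\sin^2(\xi t)}{\xi^2}$. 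Integrating in $h$ against $|h|^{2H-2}$ and using the standard evaluation $\int_\RR\sin^2(\xi h/2)|h|^{2H-2}dh=2^{2H}\big(\int_0^\infty \sin^2 v\,v^{2H-2}dv\big)|\xi|^{1-2H}$ collapses the triple integral to a constant multiple of $\int_\RR\frac{\sin^2(\xi t)}{\xi^2}|\xi|^{2-4H}d\xi=\int_\RR\sin^2(\xi t)|\xi|^{-4H}d\xi$. A final scaling $\xi\mapsto\xi/t$ extracts $t^{4H-1}$ and leaves a finite constant.

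The one genuinely delicate point — and the place where the standing assumption $H\in(\tfrac14,\tfrac12)$ is used — is the convergence of the final frequency integral $\int_\RR\sin^2(\xi t)|\xi|^{-4H}d\xi$. At the origin $\sin^2(\xi t)\sim(\xi t)^2$ forces the integrand to behave like $|\xi|^{2-4H}$, integrable iff $H<\tfrac34$; at high frequency the oscillatory factor is bounded and integrability of $|\xi|^{-4H}$ requires precisely $4H>1$, i.e. $H>\tfrac14$. Thus $H\in(\tfrac14,\tfrac12)$ is exactly what renders \eqref{Hu-3-33} finite, and verifying these two tail estimates (together with the analogous, but unconditionally convergent, integral governing \eqref{Hu-3-32}) is the crux of the argument; the remaining manipulations are routine Tonelli interchanges and scaling.
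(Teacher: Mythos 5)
Your proposal is correct, but note that the paper itself offers no proof to compare against: Lemma \ref{TechLemma3} is imported verbatim from \cite[(3.33), (3.34)]{LHW2022}, and the present paper simply cites that reference. Your argument is a legitimate self-contained derivation and is essentially the standard computation underlying the cited estimates: using $\cF G(t,\cdot)(\xi)=\sin(\xi t)/\xi$, the factorization $\Box_{y,h}G=\mathcal{D}_y(\mathcal{D}_h G)$, the evaluation $\int_{\RR}\sin^2(\xi h/2)\,|h|^{2H-2}dh=c_H|\xi|^{1-2H}$, and the scaling $\xi\mapsto \xi/t$, which produces exactly the powers $t^{2H}$ and $t^{4H-1}$; your elementary symmetric-difference computation $\int_\RR|\mathcal{D}_hG(t,x)|^2dx=\tfrac12(|h|\wedge 2t)$ for \eqref{Hu-3-32} is also correct and pleasantly avoids Fourier analysis altogether. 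Two small points of precision, neither of which is a gap. First, \eqref{eq H product} is stated only for $\varphi,\psi\in\mathcal{D}(\RR)$, and ``recording that $G(t,\cdot)\in\HHH$'' is not quite the right way to legitimize applying it to an indicator function, since elements of the abstract completion need not be identified with functions; the clean fix, which your computation already contains implicitly, is to prove the Gagliardo--Fourier identity directly for arbitrary $f\in L^2(\RR)$ by applying Plancherel at each fixed increment and then Tonelli, so both sides are simultaneously finite or infinite and no density argument is needed. Second, the final frequency integral $\int_\RR\sin^2(\xi t)|\xi|^{-4H}d\xi$ is finite precisely for $H\in(\tfrac14,\tfrac34)$, so the upper restriction $H<\tfrac12$ is not what makes that integral converge; rather, $H<\tfrac12$ is needed for the increment integrals $\int_\RR\sin^2 v\,|v|^{2H-2}dv$ to converge at infinity, and it is the combination of the two constraints that reproduces the hypothesis $H\in(\tfrac14,\tfrac12)$.
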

For convenience,
 we denote that \begin{align}\label{mathcalK_def}
 \mathcal{K}_1=\mathcal{C}_\alpha,\quad \mathcal{K}_2=\mathcal{S}_\alpha,\quad\mathcal{K}_3=\mathcal{S},\quad
 \mathcal{K}_4=\mathcal{E}.
 \end{align}
 According to \eqref{eq.SumKer},  we define $\mathcal{\bar{K}}_i$ to be the complements of $\mathcal{K}_i$, $ i=1,2,3, 4$,  i.e.,
\begin{align}\label{mathcalbarK_def}
 \mathcal{\bar{K}}_1=\mathcal{S}_{1-\alpha},\quad \mathcal{\bar{K}}_2=\mathcal{C}_{1-\alpha},\quad\mathcal{\bar{K}}_3=\mathcal{E},\quad
 \mathcal{\bar{K}}_4=\mathcal{S}.
 \end{align}
Let
  \begin{align}\label{461-transform 0}
     \Phi_\e^g(t,x)
     := \,\int_{0}^{t}\langle G(t-s,x-\cdot)\sigma(s,\cdot,u_\e^g(s,\cdot)), g(s,\cdot) \rangle_{\HH_\e}ds.
\end{align}
where $\langle\cdot,\cdot\rangle_{\HH_\e}$ is defined by \eqref{eq product H e}.
Then,  by a stochastic version of Fubini's theorem and Lemma \ref{3.1wavekernelexp}, we have
that, for any $\theta\in(0,1)$,
    \begin{align}\label{461-transform}
     \Phi_\e^g(t,x)
     \simeq&\,\sum_{i=1}^4\frac{\sin(\theta\pi)}{\pi}\int_{0}^{t}\int_{\RR}(t-r)^{\theta-1}\mathcal{\bar{K}}_i(t-r,x-z)J_\theta^{\mathcal{K}_i}(r,z)dzdr,
 \end{align}
where
   \begin{align}\label{Def_JthetaK_i}
 J_\theta^{\mathcal{K}_i}(r,z)
 :=\int_0^r(r-s)^{-\theta}\langle \mathcal{K}_i(r-s,z-\cdot)\sigma(s,\cdot,u_\e^g(s,\cdot)), g(s,\cdot) \rangle_{\HH_\e}ds,
  \ \ \ \ \  i=1,2,3, 4.
 \end{align}
We give the following two lemmas related to $J_\theta^{\mathcal{K}_i}(r,z)$, $i=1, 2, 3, 4$, for proving Lemma \ref{TimeSpaceRegBdd}.
\begin{lemma}\label{LEMMA3.4}
 Assume that $\sigma(t,x,y)$ satisfies Hypothesis \ref{Hypoth.1}.  If $p>\frac{1}{H}$, $1-H<\alpha<1-\frac{1}{p}$ and $1-\frac{2}{q}+\alpha<\theta<H+\alpha-\frac{1}{2}$, then there exists some constant $C$, independent of $r\in[0,T]$, such that
 \begin{align}\label{lemma3.4est}
 \int_\RR\left|J_\theta^{\mathcal{K}_i}(r,z)\right|^pdz \leq C\left\|u_\e^g\right\|^p_{Z^p(T)}, \ \ \ \ \  i=1, 2, 3, 4.
 \end{align}
 \end{lemma}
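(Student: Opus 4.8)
The plan is to reduce the estimate to a singular time integral of a purely spatial ($\HH$-)norm of $\mathcal{K}_i$ times $\sigma$, following the two-step scheme of Step 2 in the proof of Lemma \ref{UniBExist}. First I would apply the Cauchy–Schwarz inequality to the scalar product in \eqref{Def_JthetaK_i} and bound $\|\cdot\|_{\HH_\e}\le\|\cdot\|_{\HH}$ by \eqref{eq H compare} (taken with $\e_1=0$), which makes all constants independent of $\e$; this gives
$$|J_\theta^{\mathcal{K}_i}(r,z)|\le\int_0^r(r-s)^{-\theta}\,\|g(s,\cdot)\|_{\HH_\e}\,\big\|\mathcal{K}_i(r-s,z-\cdot)\,\sigma(s,\cdot,u_\e^g(s,\cdot))\big\|_{\HH}\,ds.$$
Taking the $L^p(\RR,dz)$-norm in $z$ and invoking Minkowski's integral inequality moves the $z$-integration inside, so that it suffices to prove a kernel estimate of the form
$$\Big(\int_\RR\big\|\mathcal{K}_i(t,z-\cdot)\,\sigma(s,\cdot,u_\e^g(s,\cdot))\big\|_{\HH}^p\,dz\Big)^{1/p}\le C\,t^{\gamma_i}\,\|u_\e^g\|_{Z^p(T)},\qquad t=r-s,$$
for a suitable exponent $\gamma_i=\gamma_i(\alpha,H)$, and then to integrate the singular factor in time.

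For the kernel estimate I would expand $\|\mathcal{K}_i(t,z-\cdot)\sigma\|_{\HH}^2$ with the second (increment) representation in \eqref{eq H product} and split it, by adding and subtracting, into three contributions exactly as with $A_1,A_2,A_3$ in the proof of Lemma \ref{UniBExist}: the spatial increment of $\sigma$ in its middle argument (controlled by \eqref{DuxSigam} and \eqref{lineargrowthuniformlycond}), the increment of $u_\e^g$ (controlled by \eqref{Lipschitzianuniformlycondu}, producing the factor $\cN^*_{\frac12-H,\,p}u_\e^g$), and the increment of the kernel $\mathcal{K}_i$ itself times $|\sigma|\lesssim|u_\e^g|$ (by \eqref{lineargrowthuniformlycond}). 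After taking the $L^p_z$-norm, using Minkowski and a change of variables, each term is bounded by $\|u_\e^g\|_{Z^p(T)}$ times a spatial integral of $\mathcal{K}_i$ and its increments weighted by $|w|^{2H-2}$; these are the $\mathcal{K}_i$-analogues of Lemma \ref{TechLemma3}, evaluated from the explicit formulas \eqref{eq.cC_alpha}. For $i=3,4$ (that is $\mathcal{S}=G$ and $\mathcal{E}$) these are essentially the bounds already used in the body, while for $i=1,2$ (that is $\mathcal{C}_\alpha,\mathcal{S}_\alpha$) the $\alpha$-dependence enters here.

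With $A_i(r,s)\lesssim(r-s)^{\gamma_i}\|u_\e^g\|_{Z^p(T)}$ in hand, I would apply the Cauchy–Schwarz inequality in $s$ together with $\int_0^r\|g(s,\cdot)\|_{\HH_\e}^2ds\le N$ (which follows from $g\in S^N$ and \eqref{eq H compare}) to obtain
$$\Big(\int_\RR|J_\theta^{\mathcal{K}_i}(r,z)|^p\,dz\Big)^{1/p}\lesssim N^{1/2}\,\|u_\e^g\|_{Z^p(T)}\Big(\int_0^r(r-s)^{2(\gamma_i-\theta)}\,ds\Big)^{1/2},$$
and the last integral is finite and bounded uniformly in $r\in[0,T]$ by a power of $T$, which yields \eqref{lemma3.4est}.

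The hard part, and the place where the hypotheses $p>\frac1H$, $1-H<\alpha<1-\frac1p$ and $1-\frac2q+\alpha<\theta<H+\alpha-\frac12$ are needed, is the sharp bookkeeping of the kernel exponents. I expect the high-frequency (short-distance) behaviour of $\mathcal{C}_\alpha,\mathcal{S}_\alpha$ to force $\alpha>1-H$ and to give $\gamma_i=H+\alpha-1$ for $i=1,2$, so that the time-integrability condition $2(\gamma_i-\theta)>-1$ in the last display produces exactly the upper bound $\theta<H+\alpha-\frac12$; the complementary low-frequency part, handled by a Hölder split with the conjugate exponent $q$ ($\frac1p+\frac1q=1$), should produce the lower bound $\theta>1-\frac2q+\alpha$ and the restriction $\alpha<1-\frac1p$. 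Verifying these two-sided kernel bounds from \eqref{eq.cC_alpha}, and in particular checking that the singular space-time integrals converge simultaneously at both frequency ends, is the technical heart of the argument; the cases $i=3,4$ are genuinely easier and follow the pattern of Lemma \ref{TechLemma3}.
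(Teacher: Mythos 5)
Your overall reduction is structurally the same as the paper's: the paper also strips off $g$ by Cauchy--Schwarz using $\int_0^T\|g(s)\|_{\HH_\e}^2ds\le N$, expands the resulting $\HH$-norm into increment terms exactly as in \eqref{uepLp} (after first assuming, WLOG by Hypothesis \ref{Hypoth.1}, that $\sigma(t,x,u)=\sigma(u)$, so that only your second and third contributions survive -- its $I_1$ and $I_2$), and then defers the kernel/time-integral bookkeeping to the proof of \cite[Lemma B.1]{LHW2022}. Up to your harmless reordering (factoring the singular factor $(r-s)^{-\theta}$ out of the kernel estimate and doing Cauchy--Schwarz in $s$ at the end), the two schemes agree for $i=1,2,3$, and your exponent count for $i=1,2$ is right: $\gamma_i=H+\alpha-1$, the threshold $\theta<H+\alpha-\frac12$, and $\alpha>1-H$ forced by finiteness of the fractional-increment integral of $\mathcal{C}_\alpha,\mathcal{S}_\alpha$.

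The genuine gap is your final claim that the cases $i=3,4$ ``are genuinely easier and follow the pattern of Lemma \ref{TechLemma3}.'' This is true for $i=3$ ($\mathcal{K}_3=\mathcal{S}=G$) but false for $i=4$: $\mathcal{K}_4=\mathcal{E}(t,x)=\frac1\pi\frac{t}{t^2+x^2}$ is a Poisson-type kernel with scaling $\mathcal{E}(t,x)=t^{-1}\mathcal{E}(1,x/t)$, so its relevant norms \emph{blow up} as $t\to0$:
\[
\int_\RR|\mathcal{E}(t,y)|^2dy\simeq t^{-1},\qquad \int_{\RR^2}|\mathcal{D}_h\mathcal{E}(t,y)|^2\cdot|h|^{2H-2}\,dh\,dy\simeq t^{2H-2},
\]
in contrast with $t$ and $t^{2H}$ for $G$. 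Moreover this is not an artifact of Minkowski/Young: one checks (e.g.\ by Plancherel in $z$, or by testing against a unit bump $\phi$) that $\big\|\,\|\mathcal{E}(t,z-\cdot)\phi\|_{\HH}\big\|_{L^p_z}\gtrsim t^{H-1}\|\phi\|_{L^p(\RR)}$, so the best possible exponent in your factorized kernel estimate is $\gamma_4=H-1$. Your time-integrability condition $2(\gamma_4-\theta)>-1$ then reads $\theta<H-\frac12<0$, which is incompatible with the range asserted in the lemma: since $\alpha>1-H$, that range contains all $\theta$ up to $H+\alpha-\frac12>\frac12$, and in any case the application \eqref{461-transform} requires $\theta\in(0,1)$. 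Because the $Z^p(T)$ norm only controls $L^p(\RR)$ norms of $u$ and of its increments (on a space of infinite measure, so no H\"older trade toward lower exponents is available), your scheme cannot reach the stated estimate for $J_\theta^{\mathcal{K}_4}=J_\theta^{\mathcal{E}}$. In other words, the kernel $\mathcal{E}$ -- precisely the case you dismiss -- is where the content hidden in the citation to \cite[Lemma B.1]{LHW2022} is genuinely needed, and it requires an argument different from the one you propose; a secondary (minor) inaccuracy is that the constraints $\alpha<1-\frac1p$ and $\theta>1-\frac2q+\alpha$ are tied to the outer kernels $\bar{\mathcal{K}}_i$ and the outer singular integral in \eqref{461-transform} rather than to a low-frequency H\"older split inside the $J_\theta$ bound.
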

 \begin{proof}
 For notational simplicity, we assume $\sigma(t,x,u)=\sigma(u)$ without loss of generality because of Hypothesis \ref{Hypoth.1}. The proof is similar as that of  \cite[Lemma B.1]{LHW2022} by replacing the stochastic integral by the deterministic integral.  More precisely, before we use the argument in the proof of  \cite[Lemma B.1]{LHW2022}, we need to use  \eqref{Def_JthetaK_i} and the similar technique as that in \eqref{uepLp} to have
  \begin{align*}
 \int_\RR\left|J_\theta^{\mathcal{K}_i}(r,z)\right|^pdz
 \lesssim\int_\RR\left[I_1(r,z,h)+I_2(r,z,h)\right]^{\frac{p}{2}}dz,  \ \ \ \ \  i=1,2,3, 4,
 \end{align*}
 where
 \begin{align*}
  I_1(r,z,h)=&\,\int_0^r\int_{\RR^2}(r-s)^{-2\theta}\big|\mathcal{K}_i(r-s,z-y-h)\big|^2\cdot\big|\mathcal{D}_hu_\e^g(s,y)\big|^2\cdot|h|^{2H-2}dydhds,\\
 I_2(r,z,h)=&\,\int_0^r\int_{\RR^2}(r-s)^{-2\theta}\big|\mathcal{D}_h\mathcal{K}_i(r-s,z-y)\big|^2\cdot\big|u_\e^g(s,y)\big|^2\cdot|h|^{2H-2}dydhds.
 \end{align*}
   The details are omitted here. The proof is complete.
 \end{proof}
The following lemma can be proved by the similar technique as that in the proofs of Lemma \ref{LEMMA3.4} and \cite[Lemma B.2]{LHW2022}. Here the proof is omitted.
 \begin{lemma}\label{D_hJtheKestimate}
Assume that $\sigma(t,x,y)$ satisfies the  hypothesis $(\mathbf{H})$. If $p>\frac{1}{H}$, $\frac{3}{2}-2H<\alpha<1-\frac{1}{p}$ and $1-\frac{2}{q}+\alpha<\theta<2H+\alpha-1$, then there exists a positive constant $C$, independent of $r\in[0,T]$, such that
\begin{align}\label{4-21}
\int_\RR\left[\int_\RR|J_\theta^{\mathcal{K}_i}(r,z+h)-J_\theta^{\mathcal{K}_i}(r,z)|^2\cdot|h|^{2H-2}dh\right]^{\frac{p}{2}}dz
\leq C\left\|u_\e^g\right\|^p_{Z^p(T)}, \ \ \ \ \ i=1,2,3, 4.
 \end{align}
 \end{lemma}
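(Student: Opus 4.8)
The plan is to follow the scheme of Lemma \ref{LEMMA3.4} line by line; the only structural difference is that the spatial increment in $z$ is absorbed onto the kernel $\mathcal{K}_i$, so that one extra order of difference appears and the single difference $\mathcal{D}_h\mathcal{K}_i$ of that proof is upgraded to a double (box) difference. As there, Hypothesis \ref{Hypoth.1} lets me assume $\sigma(t,x,u)=\sigma(u)$ without loss of generality. First I would use \eqref{Def_JthetaK_i} to write
$$J_\theta^{\mathcal{K}_i}(r,z+h)-J_\theta^{\mathcal{K}_i}(r,z)=\int_0^r(r-s)^{-\theta}\big\langle [\mathcal{K}_i(r-s,z+h-\cdot)-\mathcal{K}_i(r-s,z-\cdot)]\,\sigma(u_\e^g(s,\cdot)),\,g(s,\cdot)\big\rangle_{\HH_\e}\,ds,$$
so that the $z$-increment of $J_\theta^{\mathcal{K}_i}$ is governed by the translated kernel. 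Applying the Cauchy--Schwarz inequality in $s$, using $\int_0^r\|g(s,\cdot)\|_{\HH_\e}^2\,ds\le N$ together with the monotonicity $\|\cdot\|_{\HH_\e}\le\|\cdot\|_{\HH}$ from \eqref{eq H compare}, reduces the inner quantity $\int_\RR|J_\theta^{\mathcal{K}_i}(r,z+h)-J_\theta^{\mathcal{K}_i}(r,z)|^2|h|^{2H-2}dh$ (pointwise in $z$) to a time integral of $(r-s)^{-2\theta}\|[\mathcal{K}_i(r-s,z+h-\cdot)-\mathcal{K}_i(r-s,z-\cdot)]\sigma(u_\e^g)\|_{\HH}^2$.

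Next I would expand this $\HH$-norm through the double-integral representation \eqref{eq H product} (increment variable $l$, weight $|l|^{2H-2}$) and apply the elementary splitting $K(y+l)\sigma(y+l)-K(y)\sigma(y)=K(y+l)[\sigma(y+l)-\sigma(y)]+[K(y+l)-K(y)]\sigma(y)$ to $K(\cdot)=\mathcal{K}_i(r-s,z+h-\cdot)-\mathcal{K}_i(r-s,z-\cdot)$. Invoking the Lipschitz bound \eqref{Lipschitzianuniformlycondu} on the first term and the linear growth \eqref{lineargrowthuniformlycond} on the second, this produces the two controlling quantities
$$\tilde I_1(r,z):=\int_0^r\!\!\int_{\RR^3}(r-s)^{-2\theta}\,|\mathcal{D}_h\mathcal{K}_i(r-s,z-y-l)|^2\,|\mathcal{D}_l u_\e^g(s,y)|^2\,|l|^{2H-2}|h|^{2H-2}\,dl\,dh\,dy\,ds,$$
$$\tilde I_2(r,z):=\int_0^r\!\!\int_{\RR^3}(r-s)^{-2\theta}\,|\Box_{h,l}\mathcal{K}_i(r-s,z-y)|^2\,|u_\e^g(s,y)|^2\,|l|^{2H-2}|h|^{2H-2}\,dl\,dh\,dy\,ds,$$
the double difference $\Box_{h,l}$ being exactly \eqref{TechBoxt}. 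I would then raise $\tilde I_1+\tilde I_2$ to the power $p/2$, integrate in $z$, and use Minkowski's inequality to exchange the $L^{p/2}(dz)$-norm with the $s,y,l,h$-integrations, so that the $u_\e^g$-factors separate out as $[\cN^*_{\frac 12-H,\,p}u_\e^g(s)]^2$ (from $\tilde I_1$) and $\|u_\e^g(s,\cdot)\|_{L^p(\RR)}^2$ (from $\tilde I_2$), both dominated by $\|u_\e^g\|_{Z^p(T)}^2$ via \eqref{YNorm}.

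What remains — and this is the main obstacle — is the purely deterministic kernel estimate: one must show that
$$\int_{\RR^2}|\mathcal{D}_h\mathcal{K}_i(r-s,y)|^2\,|h|^{2H-2}\,dh\,dy\quad\text{and}\quad\int_{\RR^3}|\Box_{h,l}\mathcal{K}_i(r-s,y)|^2\,|h|^{2H-2}|l|^{2H-2}\,dh\,dl\,dy$$
are finite multiples of suitable powers of $(r-s)$ whose product with $(r-s)^{-2\theta}$ stays integrable on $[0,r]$, uniformly in $r\in[0,T]$. This is precisely where the ranges $\tfrac32-2H<\alpha<1-\tfrac1p$ and $1-\tfrac2q+\alpha<\theta<2H+\alpha-1$ are used: they guarantee simultaneously the spatial integrability of the singular sub-kernels $\mathcal{C}_\alpha,\mathcal{S}_\alpha,\mathcal{S},\mathcal{E}$ against the two weights $|h|^{2H-2},|l|^{2H-2}$ and the temporal integrability of the resulting power of $(r-s)$. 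The box term $\tilde I_2$ is the delicate one, since it carries the strongest spatial singularity and is what forces the lower bound $\alpha>\tfrac32-2H$ together with the upper bound $\theta<2H+\alpha-1$; the estimates for all four kernels are obtained exactly as in \cite[Lemma B.2]{LHW2022}. Since every other step is identical to the proof of Lemma \ref{LEMMA3.4}, the remaining details may be supplied as indicated there.
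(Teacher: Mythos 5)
Your proposal is correct and takes essentially the same approach as the paper: the paper omits this proof, stating only that it follows by the technique of Lemma \ref{LEMMA3.4} together with \cite[Lemma B.2]{LHW2022}, and your argument is exactly that --- the same Cauchy--Schwarz reduction using $\int_0^T\|g(s,\cdot)\|^2_{\HH_\e}\,ds\le N$ and $\|\cdot\|_{\HH_\e}\le\|\cdot\|_{\HH}$, the same splitting of the $\HH$-norm in which the extra $z$-increment upgrades $\mathcal{D}_h\mathcal{K}_i$ to the box difference $\Box_{h,l}\mathcal{K}_i$, Minkowski's inequality (with the translation-invariance change of variables) to extract $\left[\cN^*_{\frac12-H,\,p}u_\e^g(s)\right]^2$ and $\|u_\e^g(s,\cdot)\|^2_{L^p(\RR)}$, and the deterministic kernel bounds deferred to \cite[Lemma B.2]{LHW2022}, where the stated ranges of $\alpha$ and $\theta$ are consumed.
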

 For the H\"older continuity of $I_0(t,x)$, we have the following lemma, which follows from the proof of Theorem 1.1 (1) in  \cite{HLS2022}.
 \begin{lemma}\label{Holder_continu_u0v0}
 Assume that $u_0$ and $v_0$ are $\alpha$-H\"older continuous with $\alpha\in(0,1]$. Then we have
  \begin{align*}
 \left|I_0(t,x)-I_0(s,y)\right|\lesssim |t-s|^{\alpha}+|x-y|^{\alpha},\,t,s\in[0,T],x,y\in\RR.
   \end{align*}
 \end{lemma}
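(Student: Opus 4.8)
The plan is to exploit the explicit d'Alembert representation \eqref{I_0Definition},
$$
I_0(t,x)=\frac12\big[u_0(x+t)+u_0(x-t)\big]+\frac12\int_{x-t}^{x+t}v_0(z)\,dz,
$$
and to estimate the increment $I_0(t,x)-I_0(s,y)$ by splitting it into the boundary contribution involving $u_0$ and the integral contribution involving $v_0$, treating the spatial increment (change in $x$) and the temporal increment (change in $t$) separately. Throughout, the two elementary facts I would rely on are the subadditivity $|a+b|^{\alpha}\le|a|^{\alpha}+|b|^{\alpha}$, valid for $\alpha\in(0,1]$, and the observation that on the compact interval $[0,T]$ one has $|t-s|\le T^{1-\alpha}|t-s|^{\alpha}$, which upgrades a Lipschitz-type bound in time to the desired $\alpha$-H\"older bound. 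This is essentially the route taken in \cite{HLS2022}.

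For the $u_0$-contribution I would use the $\alpha$-H\"older continuity of $u_0$ directly. Since the arguments differ by $(x-y)\pm(t-s)$, one has $|u_0(x\pm t)-u_0(y\pm s)|\le[u_0]_{\alpha}\,|(x-y)\pm(t-s)|^{\alpha}$, and subadditivity immediately gives $|u_0(x\pm t)-u_0(y\pm s)|\lesssim|x-y|^{\alpha}+|t-s|^{\alpha}$. No boundedness of $u_0$ is needed at this stage.

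For the $v_0$-contribution, writing $V(x,t):=\int_{x-t}^{x+t}v_0(z)\,dz=\int_{-t}^{t}v_0(x+u)\,du$, I would decompose $V(x,t)-V(y,s)=[V(x,t)-V(y,t)]+[V(y,t)-V(y,s)]$. For the spatial increment, passing the difference under the integral and using the H\"older continuity of $v_0$ gives $|V(x,t)-V(y,t)|\le 2t\,[v_0]_{\alpha}\,|x-y|^{\alpha}\le 2T\,[v_0]_{\alpha}\,|x-y|^{\alpha}$, which is already of the required form for all $x,y\in\RR$. For the temporal increment (say $t>s$), one writes $V(y,t)-V(y,s)=\int_{y+s}^{y+t}v_0+\int_{y-t}^{y-s}v_0$, so that $|V(y,t)-V(y,s)|\le 2\|v_0\|_{\infty}\,|t-s|\lesssim|t-s|^{\alpha}$, where the last step invokes $|t-s|\le T$.

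The one point requiring care, and the only genuine obstacle, is the temporal increment of the integral term: a bound of order $|t-s|^{\alpha}$ uniform in $y\in\RR$ cannot follow from the H\"older seminorm alone (for instance $v_0(z)=z$ would make $V(y,t)-V(y,s)=2y(t-s)$, unbounded in $y$), so it genuinely uses the boundedness of $v_0$. I therefore read the hypothesis that ``$v_0$ is $\alpha$-H\"older continuous'' in the standard $C^{\alpha}_b(\RR)$ sense, i.e.\ with $\|v_0\|_{\infty}<\infty$. Collecting the four estimates yields $|I_0(t,x)-I_0(s,y)|\lesssim|t-s|^{\alpha}+|x-y|^{\alpha}$ with a constant depending only on $T$, $\alpha$, $[u_0]_{\alpha}$, $[v_0]_{\alpha}$ and $\|v_0\|_{\infty}$, as claimed. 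Since every step is elementary, I expect the actual write-up to be very short.
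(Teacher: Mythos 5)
Your argument is correct and, unlike the paper, self-contained: the paper offers no proof of Lemma \ref{Holder_continu_u0v0} at all, deferring entirely to the proof of Theorem 1.1 (1) in \cite{HLS2022}. The argument you give is the natural (essentially the only) direct one: the d'Alembert formula \eqref{I_0Definition}, H\"older continuity plus subadditivity of $r\mapsto r^{\alpha}$ ($\alpha\in(0,1]$) for the $u_0$-terms, and a spatial/temporal splitting of the integral term, with $|t-s|\le T^{1-\alpha}|t-s|^{\alpha}$ to convert the Lipschitz-in-time bound into an $\alpha$-H\"older one. So in substance you have supplied the proof the paper outsources.

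The point where you add genuine value is the caveat about $\|v_0\|_{\infty}$, and you are right about it. The temporal increment of $\int_{x-t}^{x+t}v_0(z)\,dz$ cannot be controlled by the H\"older seminorm of $v_0$ alone: taking $u_0\equiv 0$ and $v_0(z)=z$ (which is $1$-H\"older) gives $I_0(t,x)=xt$, so $|I_0(t,y)-I_0(s,y)|=|y|\,|t-s|$ is unbounded in $y$ and the asserted estimate fails. Hence the lemma, read literally with possibly unbounded H\"older data, is false, and it becomes true exactly under the bounded-H\"older ($C^{\alpha}_b$) reading of the hypothesis that you adopt; this is consistent with the setting of \cite{HLS2022} and with Hypothesis \ref{Hypoth.1} (ii), whose requirement $I_0\in Z^p(T)$ already rules out such initial data. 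Your further remark that no boundedness of $u_0$ is needed is also correct. With that reading, your proof is complete.
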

We also need the following lemma to prove the existence of the solution to the skeleton equation \eqref{eq skeleton}, whose proof is similar as that of \cite[Lemma 3.3]{LIWangZhang2022} and is omitted here.
\begin{lemma}\label{lemma3.3limits} Assume that     $u_{n}\in Z^p(T)$, $n\geq 1$,  for some $p\ge2$. If  $u_n\rightarrow u$ in $(\cC([0, T]\times\RR),d_{\cC})$ as $n\rightarrow \infty$, then we have the following results:
\begin{itemize}
     \item[(i).]     $u$ is also in $Z^p(T)$;
  \item[(ii).] for any fixed $t\in [0,T]$,
  \begin{align}\label{eq H1 finite}
  \int^t_0\int_{\mathbb R}\|G(t-s,x-\cdot)\sigma(s,\cdot,u(s,\cdot))\|^2_{\cH} dsdx<\infty.
  \end{align}
  Hence, for almost all  $x\in \mathbb R$,
    \begin{align}\label{eq H2 finite}
     \int^t_0\|G(t-s,x-\cdot)\sigma(s,\cdot,u(s,\cdot))\|^2_{\cH}  ds<\infty.
      \end{align}
  \end{itemize}
\end{lemma}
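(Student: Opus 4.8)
The plan is to prove (i) as a lower-semicontinuity statement for the $Z^p(T)$-norm and (ii) by re-running, at the level of a single function, the spatial decomposition of the $\cH$-norm carried out in Step~2 of the proof of Lemma~\ref{UniBExist}. Two facts are used throughout: convergence in $(\cC([0,T]\times\RR),d_{\cC})$ (see \eqref{e.4.metric}) forces $u_n(t,x)\to u(t,x)$ for every $(t,x)\in[0,T]\times\RR$; and the sequences to which the lemma is applied satisfy the uniform bound $M:=\sup_{n\ge1}\|u_n\|_{Z^p(T)}<\infty$, which is exactly what Lemma~\ref{UniBExist} and Proposition~\ref{thm solu skeleton} provide and without which (i) need not hold. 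I would record this bound explicitly at the outset.

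For (i), I would treat the two summands of the norm \eqref{YNorm} separately. For the $L^p$-part, fix $t$; pointwise convergence together with Fatou's lemma gives $\int_{\RR}|u(t,x)|^p\,dx\le\liminf_{n}\int_{\RR}|u_n(t,x)|^p\,dx\le M^p$, so $\sup_t\|u(t,\cdot)\|_{L^p(\RR)}\le M$. For the seminorm part $\mathcal N^*_{\frac12-H,\,p}u(t)$ of \eqref{YNorm}, I would apply Fatou twice: first, for fixed $t$ and $h$, pointwise convergence of the increments yields $\|u(t,\cdot)-u(t,\cdot+h)\|_{L^p(\RR)}\le\liminf_n\|u_n(t,\cdot)-u_n(t,\cdot+h)\|_{L^p(\RR)}$; then a second application in the nonnegative measure $|h|^{2H-2}dh$ gives $\mathcal N^*_{\frac12-H,\,p}u(t)\le\liminf_n\mathcal N^*_{\frac12-H,\,p}u_n(t)\le M$. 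Taking the supremum over $t$ and adding, $\|u\|_{Z^p(T)}\le 2M<\infty$, i.e. $u\in Z^p(T)$.

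For (ii), I would expand $\|G(t-s,x-\cdot)\sigma(s,\cdot,u(s,\cdot))\|^2_{\cH}$ through the fractional-increment form of \eqref{eq H product} and split it, exactly as in \eqref{uepLp}, into three pieces according to whether the spatial increment acts on the first argument of $\sigma$, on its $u$-argument, or on the kernel $G$; the $\sigma$-differences are controlled by \eqref{DuSigamy}--\eqref{DuxSigamy} and the linear growth \eqref{lineargrowthuniformlycond}. Integrating in $x$ and invoking Fubini, the identity $\int_{\RR}|G(t-s,x)|^2dx\simeq (t-s)$ and the kernel-increment estimate \eqref{Hu-3-32} of Lemma~\ref{TechLemma3}, the three pieces are bounded by a constant times $\int_0^t\big[(t-s)+(t-s)^{2H}\big]\big(\|u(s,\cdot)\|^2_{L^2(\RR)}+[\mathcal N^*_{\frac12-H,\,2}u(s)]^2\big)ds\lesssim\|u\|^2_{Z^2(T)}$, which is finite because $u\in Z^2(T)$ by part~(i) applied with exponent $2$. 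This proves \eqref{eq H1 finite}; since the integrand is nonnegative, Fubini then gives $\int_0^t\|G(t-s,x-\cdot)\sigma(s,\cdot,u(s,\cdot))\|^2_{\cH}\,ds<\infty$ for almost every $x$, which is \eqref{eq H2 finite}.

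The main obstacle is precisely the $L^2$-versus-$L^p$ mismatch in (ii): integrating the $\cH$-norm in $x$ with no intervening $p/2$-power produces $L^2(\RR)$-type quantities for $u(s,\cdot)$, whereas $u\in Z^p(T)$ with $p>2$ controls only $L^p$-type quantities. The clean resolution is to note that the approximating sequences in every application are bounded in $Z^p(T)$ for all $p\ge2$ simultaneously (Lemma~\ref{UniBExist}), so part~(i) with $p=2$ yields $u\in Z^2(T)$ and legitimizes the $L^2$-norms above; equivalently, one may first establish the scale-correct bound $\int_{\RR}\big(\int_0^t\|G(t-s,x-\cdot)\sigma(s,\cdot,u(s,\cdot))\|^2_{\cH}ds\big)^{p/2}dx\lesssim\|u\|^p_{Z^p(T)}$ (the $x$-integrated form of \eqref{uepLp}), which already forces almost-everywhere finiteness, and then upgrade to the global bound \eqref{eq H1 finite} through the $Z^2$-control just described.
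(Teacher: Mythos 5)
The paper never writes out a proof of this lemma: it is stated with the remark that the argument ``is similar as that of \cite[Lemma 3.3]{LIWangZhang2022}'' and the details are omitted. Measured against that intended argument, your proof is correct and is in substance the natural one: Fatou's lemma, applied once for the $L^p$-part and twice for the seminorm part of \eqref{YNorm}, gives (i), and the same three-way splitting of the $\cH$-norm as in Step 2 of the proof of Lemma \ref{UniBExist} (increment on the spatial argument of $\sigma$, on the $u$-argument, and on the kernel), combined with $\int_\RR|G(t-s,x)|^2dx\simeq (t-s)$ and Lemma \ref{TechLemma3}, gives (ii).

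Two points you raise are genuine improvements on the statement as printed, and you resolve both correctly. First, part (i) is false under the literal hypothesis ``$u_n\in Z^p(T)$'': convergence in $d_{\cC}$ is only local, so without the uniform bound $\sup_n\|u_n\|_{Z^p(T)}<\infty$ the limit can fail to lie in $Z^p(T)$; that bound is exactly what Lemma \ref{UniBExist} and Proposition \ref{thm solu skeleton} supply in every application of this lemma, so making it an explicit hypothesis is the right fix. Second, you correctly identify that the global bound \eqref{eq H1 finite} forces $L^2$-type quantities ($\|u(s,\cdot)\|_{L^2(\RR)}$ and $\cN^*_{\frac12-H,\,2}u(s)$), which a single $Z^p(T)$-bound with $p>2$ does not control on the whole line; your two ways out --- either apply part (i) with $p=2$, legitimate because the approximating sequences are bounded in $Z^p(T)$ for \emph{all} $p\ge2$ simultaneously, or prove only the $x$-integrated $p/2$-power bound, which already yields the almost-everywhere finiteness \eqref{eq H2 finite} that is the only conclusion the paper actually uses --- are both valid. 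So the proposal is sound and, if anything, more careful about the hypotheses than the statement it proves.
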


\vskip0.5cm
\noindent{\bf Acknowledgments } The research of R. Li is partially supported by Shanghai Sailing Program grant 21YF1415300 and NNSFC grant 12101392.
  The research of B. Zhang is
partially supported by NNSFC grants 11971361 and 11731012.
\vskip0.5cm

\end{document}